\newtheorem{theorem}{Theorem}
\newtheorem{lemma}{Lemma}[section]
\newtheorem{proposition}{Proposition}[section]
\theoremstyle{definition}
\newtheorem{definition}{Definition}[section]
\theoremstyle{remark}
\newtheorem{remark}{Remark}[section]
\numberwithin{equation}{section}
\def\R{\mathbb R}
\def\C{\mathbb C}
\def\Z{\mathbb Z}
\def\N{\mathbb N}
\def\supp{\text{supp}}
\def\dist{\text{dist}}
\def\Arg{\text{Arg}}
\def\({\left(}
\def\){\right)}
\def\[{\left[}
\def\]{\right]}
\def\<{\left<}
\def\>{\right>}
\def\less{\lesssim}
\def\ER{\color{black}}
\begin{document}

\title{Holomorphic Extension on Product Lipschitz Surfaces in Two Complex Variables}

\author{Jarod Hart}
\address{Department of Mathematics\\ Wayne State University\\ Detroit, MI 48202}
\email{ jarod.hart@wayne.edu}


\author{Alessandro Monguzzi}
\address{Dipartimento di Matematica\\ 
Universit\`{a} Statale di Milano\\ Via C. Saldini 
50, 20133 Milan\\
Italy}
\email{alessandro.monguzzi@unimi.it}

\thanks{The second author partially supported by the grant PRIN 2010-11 {\em Real and Complex Manifolds: Geometry, Topology and Harmonic Analysis} }

\subjclass[2011]{Primary 42B02; Secondary 44A02}

\date{\today}

\dedicatory{ }

\keywords{Square Function, Littlewood-Paley, Biparameter, Calder\'on-Zygmund Operators, Holomorphic Extension, Several Complex Variables}

\begin{abstract}
In this work we prove a new $L^p$ holomorphic extension result for functions defined on product Lipschitz surfaces with small Lipschitz constants in two complex variables.  We define biparameter and partial Cauchy integral operators that play the role of boundary values for holomorphic functions on product Lipschitz domain.  In the spirit of the application of David-Journ\'e-Semmes and Christ's $Tb$ theorem to the Cauchy integral operator, we prove a biparameter $Tb$ theorem and apply it to prove $L^p$ space bounds for the biparameter Cauchy integral operator.  We also prove some new biparameter Littlewood-Paley-Stein estimates and use them to prove the biparameter $Tb$ theorem.
\end{abstract}

\maketitle

\begin{spacing}{1}

\section{Introduction}

In this work, we solve a holomorphic extension problem for certain product surfaces in $\C^2$ and prove some results in harmonic analysis pertaining to biparameter singular integral operators and Littlewood-Paley-Stein theory.  To motivate our results, we start with a brief history of holomorphic extension and boundary values of holomorphic functions results related to our problem.

The first situation we describe is one on the upper half plane $\mathbb H=\{x+it:x\in\R,\;t>0\}$ in $\C$.  Given a function 
$f\in L^p(\R)$ for $1<p<\infty$, one can extend $f$ to a holomorphic function 
\begin{align*}
F(x+it)=\frac{1}{2\pi i}\int_\R\frac{f(y)}{y-(x+it)}dy;\text{ for }x\in\R,\;t\neq0.
\end{align*}
This function $F$ is a holomorphic extension of $f$ in the the sense that $F$ is holomorphic on $\C\backslash\R$ and $f(x)=f_+(x)-f_-(x)$ for $x\in\R$, where
\begin{align*}
&f_+(x)=\lim_{t\rightarrow0^+}F(x+it)\hspace{.25cm}\text{ and }\hspace{.25cm}f_-(x)=\lim_{t\rightarrow0^+}F(x-it).
\end{align*}
These limits hold almost everywhere in $\R$ and in $L^p(\R)$.  Sometimes this sort of holomorphic extension result is known as a Hilbert-Riemann type problem.  It also follows that $f_{\pm}=\frac{1}{2}(\pm\,I+iH)f$ where $I$ is the identity operator and $H$ is the Hilbert transform
\begin{align*}
Hf(x)=\lim_{t\rightarrow0^+}\frac{1}{\pi}\int_\R\frac{x-y}{(x-y)^2+t^2}f(y)dy.
\end{align*}
There is a rich history involving the Hilbert transform and boundary behavior of holomorphic functions, which is intrinsically related to the study of Hardy spaces.  The $L^p(\R)$ extension results mentioned here were solved by the combined work of many people in the early 1900's, including classical works of Hilbert and Riesz, among others.

The next situation we discuss is a Lipschitz perturbed upper half space of the form $\mathbb H_\Gamma=\{\gamma(x)+it:x\in\R,\;t>0\}$ where $\gamma:\R\rightarrow\C$ is a Lipschitz 
graph.  Problems related to holomorphic functions on $\mathbb H_\Gamma$ can often be solved using the corresponding solution on $\mathbb H$ and the Riemann mapping theorem, but that is not the case in general with the $L^p$ boundary behavior of holomorphic functions on $\mathbb H_\Gamma$.  The holomorphic extension result corresponding to the one in the last paragraph is the following:  given a function $g\in L^p(\Gamma)$ for $1<p<\infty$, one can extend $g$ to a holomorphic function 
\begin{align*}
G(z+it)=\frac{1}{2\pi i}\int_\Gamma\frac{g(\xi)}{\xi-(z+it)}d\xi;\text{ for }z\in\Gamma,\;t\neq0,
\end{align*}
which is a holomorphic extension of $g$ in the the sense that $G$ is holomorphic on $\C\backslash\Gamma$ and $g(z)=g_+(z)-g_-(z)$ for $z\in\Gamma$, where
\begin{align*}
&g_+(z)=\lim_{t\rightarrow0^+}G(z+it)\hspace{.25cm}\text{ and }\hspace{.25cm}g_-(x)=\lim_{t\rightarrow0^+}G(z-it)
\end{align*}
and these limits exist in pointwise almost everywhere on $\Gamma$ and in $L^p(\Gamma)$.  The boundary values of $G$ can be realized in this setting as well by $g_{\pm}(z)=\frac{1}{2}(\pm \,I+iC_\Gamma)g(z)$, where $C_\Gamma$ is the Cauchy integral transform
\begin{align*}
C_\Gamma g(z)=\lim_{t\rightarrow0^+}\frac{1}{\pi}\int_\Gamma\frac{z-\xi}{(z-\xi)^2+t^2}g(\xi)d\xi.
\end{align*}
Progressing from the extension problem on $\mathbb H$ to the one on $\mathbb H_\Gamma$ was not an easy feat.  It took more than 40 years from the proof of $L^p$ bounds for the Hilbert transform to prove the $L^p$ bounds for the Cauchy integral transform along Lipschitz curves with small constants, which was due to Calder\'on \cite{Ca}.  The proof for a general Lipschitz constant appeared some years later in works of Coifman, McIntosh and Meyer \cite{MR672839, MR758451}.  Later, new proofs and generalizations appeared in the work of David--Journ\`{e}--Semmes \cite{DJS}, Jones \cite{Jo}, and Chist \cite{Chr}, among others. 
 
These results were extended to upper half spaces of type $\R^{n+1}_+=\R^n\times(0,\infty)$ in place of $\mathbb H$ by Stein in terms of systems of conjugate harmonic functions, see e.g. \cite{St1}.  In this situation, the role of the Hilbert transform is replaced by the Riesz transforms $R_j$ on $\R^n$, and convergence results hold in 
$L^p(\R^n)$ for $1<p<\infty$ and appropriate functions $f:\R^n\rightarrow\C$ with its harmonic conjugates $R_jf(x)$ for $j=1,...,n$.  The $n$-dimensional Lipschitz upper half spaces were also addressed in a series of papers, Fabes-Kenig-Neri \cite{FKN}, Jerison-Kenig \cite{JK}, and Kenig-Pipher \cite{KP}.  They solved problems related to harmonic functions on upper half domains of the form $\R^{n+1}_{L+}=\R^n\times\{L(x)+t:x\in\R^n,\;t>0\}$, among others, where $L:\R^n\rightarrow\R$ is a Lipschitz 
function.  In \cite{FKN,JK,KP}, double layer potentials replace the Riesz transforms in Stein's work, and their associated Hardy spaces are defined.

Another setting where this type of problem has been solved is on the product upper half plane $\mathbb H\times\mathbb H$ in $\C^2$.  The corresponding Hilbert-Riemann property for the product upper half plane is stated as follows:  given a function $f\in L^p(\R^2)$ for $1<p<\infty$, one can extend $f$ to a holomorphic function 
\begin{align*}
F(x+it)=\frac{1}{(2\pi i)^2}\int_{\R^2}\frac{f(y)}{(y_1-(x_1+it_1))(y_2-(x_2+it_2))}dy;\text{ for }x=(x_1,x_2)\in\R^2,\;t=(t_1,t_2)
\end{align*}
with $t_1,t_2\neq0$.  This function $F$ is a holomorphic extension of $f$ in the the sense that $F$ is holomorphic on $(\C\backslash\R)\times(\C\backslash\R)$ and $f(x)=f_{++}(x)-f_{+-}(x)-f_{-+}(x)+f_{--}(x)$ for $x\in\R^2$, where
\begin{align*}
&f_{++}(x)=\lim_{t_1,t_2\rightarrow0^+}F(x_1+it_1,x_2+it_2),& && &f_{+-}(x)=\lim_{t_1,t_2\rightarrow0^+}F(x_1+it_1,x_2-it_2),&\\
&f_{-+}(x)=\lim_{t_1,t_2\rightarrow0^+}F(x_1-it_1,x_2+it_2),& &\text{ and }& &f_{--}(x)=\lim_{t_1,t_2\rightarrow0^+}F(x_1-it_1,x_2-it_2).
\end{align*}
These limits hold almost everywhere in $\R^2$ and in $L^p(\R^2)$.  In this situation, it follows that $f_{\pm,\pm}=\frac{1}{4}(\pm\, I+iH_1)(\pm \,I+iH_2)f(x)$ where $H_1f$ and $H_2f$ are the Hilbert transforms applied to the first and second variable of $f$ respectively.  These operators $H_1$, $H_2$, and $H_1H_2$ are sometimes called the partial and biparameter Hilbert transforms, which are bounded on $L^p(\R^2)$, see e.g. \cite{rF2,rFS1}.   These boundedness results are related to the biparameter Hardy space theory that is addressed in \cite{MM,GS,Gu,CF,rF2,rFS1,rF3,rF4}, among many others.  Many of these articles work on the polydisk instead of products of upper half planes, but working in these two settings is essentially equivalent; look, for example, in \cite{GS}. 

In this work, we address a holomorphic extension result similar to the ones above for product Lipschitz upper half spaces, which is stated as follows. Given an appropriate Lipschitz boundary surface $\Gamma=\Gamma_1\times\Gamma_2\subset\C^2$ and a function $g:\Gamma\rightarrow\C$, there is a function $G$ that is holomorphic on $(\C\backslash\Gamma_1)\times(\C\backslash\Gamma_2)$ satisfying
\begin{align}
g(z)=g_{++}(z)-g_{+-}(z)-g_{-+}(z)+g_{--}(z),\label{holomorphiclimit}
\end{align}
for $z=(z_1,z_2)\in\Gamma$, where
\begin{align}
&g_{++}(z)=\lim_{t_1,t_2\rightarrow0^+}G(z_1+it_1,z_2+it_2),& && &g_{+-}(z)=\lim_{t_1,t_2\rightarrow0^+}G(z_1+it_1,z_2-it_2),\label{limits}\\
&g_{-+}(z)=\lim_{t_1,t_2\rightarrow0^+}G(z_1-it_1,z_2+it_2),& &\text{and}& &g_{--}(z)=\lim_{t_1,t_2\rightarrow0^+}G(z_1-it_1,z_2-it_2).\notag
\end{align}
For now we leave the sense in which \eqref{holomorphiclimit} holds, the sense that the limits in \eqref{limits} hold, and the conditions on $\Gamma$ unspecified, but these things will be defined later in this section.

Before we state our holomorphic extension result, we will set a few definitions.  We say that $G(\omega_1,\omega_2)$ is holomorphic at $(\omega_1,\omega_2)\in\C^2$ if $G$ has an absolutely convergent power series representation on a neighborhood of $(\omega_1,\omega_2)$.  We will call the Lipschitz surfaces that we work with product Lipschitz surfaces with small Lipschitz constants, and they are defined as follows.  Let $L_1,L_2:\R\rightarrow\R$ be Lipschitz functions.  Define $\gamma_1(x_1)=x_1+iL_1(x_1)$, $\gamma_2(x_2)=x_2+iL_2(x_2)$, and $\gamma(x)=(\gamma_1(x_1),\gamma_2(x_2))\in\C^2$ for $x=(x_1,x_2)\in\R^2$.  Then we call $\Gamma=\Gamma_1\times\Gamma_2=\gamma_1(\R)\times\gamma_2(\R)$ a product Lipschitz surface in $\C^2$.  We say that $\Gamma$ is a product Lipschitz surface with small Lipschitz constants if the Lipschitz constants $\lambda_1$ and $\lambda_2$ of $L_1$ and $L_2$ respectively are both smaller than $1$.  The upper half space associated to 
$\Gamma$ is defined $\mathbb H_{\Gamma_1}\times\mathbb H_{\Gamma_2}$, where $\mathbb H_{\Gamma_j}=\{\gamma_j(x_j)+it_j:x_j\in\R,\;t_j>0\}$.  We also define $L^p(\Gamma)$ for a product Lipschitz surface $\Gamma$ as follows.  Given a product Lipschitz surface $\Gamma=\gamma_1(\R)\times\gamma_2(\R)$, let $L^p(\Gamma)$ be the collection of measurable functions $g:\Gamma\rightarrow\C$ such that
\begin{align*}
||g||_{L^p(\Gamma)}^p=\int_{\R^2}|g(\gamma(x))|^p|\gamma_1'(x_1)\gamma_2'(x_2)|dx_1\,dx_2<\infty.
\end{align*}
Now we state our holomorphic extension result.

\begin{theorem}\label{t:ext}
Let $\Gamma$ be a product Lipschitz surface with small Lipschitz constants in $\C^2$ defined by $\gamma=(\gamma_1,\gamma_2):\R^2\rightarrow\C^2$.  Assume that 
\begin{align*}
&\lim_{|x_1|\rightarrow\infty}\frac{\gamma_1(x_1)}{x_1}=c_1&  &\text{and}& &\lim_{|x_2|\rightarrow\infty}\frac{\gamma_2(x_2)}{x_2}=c_2&
\end{align*}
for some $c_1,c_2\in\C$.  If $g\in L^p(\Gamma)$ for some 
$1<p<\infty$, then there exists a function $G:(\C\backslash\Gamma_1)\times(\C\backslash\Gamma_2)\rightarrow\C$ that is a 
holomorphic extension of $g$, where \eqref{holomorphiclimit} holds almost everywhere on $\Gamma$ and the limits in \eqref{limits} hold in $L^p(\Gamma)$ and pointwise almost everywhere on $\Gamma$.
\end{theorem}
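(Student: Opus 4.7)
The plan is to define $G$ as the biparameter Cauchy integral
\begin{align*}
G(w_1, w_2) = \frac{1}{(2\pi i)^2} \int_\Gamma \frac{g(\zeta_1, \zeta_2)}{(\zeta_1 - w_1)(\zeta_2 - w_2)}\, d\zeta_1\, d\zeta_2
\end{align*}
for $(w_1, w_2) \in (\C \setminus \Gamma_1) \times (\C \setminus \Gamma_2)$. The asymptotic conditions $\gamma_j(x_j)/x_j \to c_j$ at infinity ensure that the Cauchy kernel lies in the dual of $L^p(\Gamma)$ for each fixed $(w_1, w_2)$ off $\Gamma$, so the integral converges absolutely by H\"older's inequality. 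Holomorphy of $G$ in each $w_j$ on $\C \setminus \Gamma_j$ follows by differentiating under the integral sign, which is justified by dominated convergence on compact sets at positive distance from $\Gamma$; joint holomorphy then follows from Hartogs' theorem.

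For the boundary values I would exploit the tensor product structure of the kernel and use Plemelj-Sokhotski-type identities in each variable. Writing $t = (t_1, t_2)$ with $t_j > 0$, one expects
\begin{align*}
G(z_1 + \epsilon_1 i t_1,\, z_2 + \epsilon_2 i t_2) = \tfrac{1}{4}(\epsilon_1 I + i C_1^{t_1})(\epsilon_2 I + i C_2^{t_2}) g(z)
\end{align*}
for $(\epsilon_1, \epsilon_2) \in \{+,-\}^2$, where $C_j^{t_j}$ denotes the partial Cauchy integral operator in the $j$-th variable with truncation parameter $t_j$, and the two partial operators commute. Passing to the limit $t_1, t_2 \to 0^+$ requires $L^p$ and a.e.\ convergence of $C_j^{t_j} \to C_j$ (classical, via the one-parameter Coifman-McIntosh-Meyer theorem together with maximal truncation estimates) and, crucially, uniform $L^p$ bounds for the composed operators $C_1^{t_1} C_2^{t_2}$ together with strong convergence on a dense subclass, for instance tensor products of smooth compactly supported functions on $\Gamma$.

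The main obstacle is this last uniform biparameter bound, which is exactly what the biparameter Cauchy integral bound of this paper provides. I would apply the biparameter $Tb$ theorem with $b_j(x_j) = \gamma_j'(x_j)$: the smallness of the Lipschitz constants makes each $b_j$ pseudo-accretive with $\Re b_j$ bounded below, and the tensor $b_1 \otimes b_2$ accretes the biparameter Cauchy integral kernel in the sense required by the $Tb$ hypothesis. The biparameter Littlewood-Paley-Stein estimates developed in the paper supply the technical machinery that makes the $Tb$ argument go through in this product setting.

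Once the four boundary limits exist in $L^p(\Gamma)$ and pointwise a.e., the jump formula \eqref{holomorphiclimit} is a purely algebraic consequence of the biparameter Plemelj identity $g_{\epsilon_1 \epsilon_2} = \tfrac{1}{4}(\epsilon_1 I + i C_1)(\epsilon_2 I + i C_2) g$: the alternating combination $g_{++} - g_{+-} - g_{-+} + g_{--}$ factors as $\tfrac{1}{4}[(I+iC_1)-(-I+iC_1)][(I+iC_2)-(-I+iC_2)]g$, and each bracket collapses to $2I$, leaving $g$.
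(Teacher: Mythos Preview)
Your Plemelj identity at finite $t$ is wrong. The kernel $\dfrac{1}{2\pi i}\,\dfrac{1}{\zeta_j-(z_j+\epsilon_j it_j)}$ splits as $\tfrac12\bigl(\epsilon_j\,p_{t_j}(z_j-\zeta_j)+i\,q_{t_j}(z_j-\zeta_j)\bigr)$, so the correct truncated formula is
\[
G(z_1+\epsilon_1 it_1,\,z_2+\epsilon_2 it_2)=\tfrac14\bigl(\epsilon_1 P_{t_1}+iQ_{t_1}\bigr)\bigl(\epsilon_2 P_{t_2}+iQ_{t_2}\bigr)g(z),
\]
with $P_{t_j}$ the Poisson-type operator and $Q_{t_j}$ the conjugate one. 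The identity operator only appears \emph{after} the limit, and the convergence $P_{t_j}\to I$ (Lemma~\ref{l:Poissonconverge} in the paper) is itself a nontrivial ingredient. As written, your formula is false for every $t_1,t_2>0$.

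The paper does not keep a factored form. It expands the product above into the four operators $P_t$, $\mathcal C_t$, $\mathcal C_t^{p1}$, $\mathcal C_t^{p2}$ and proves convergence for each one separately, invoking Theorem~\ref{t:Cgammabounds}: the genuinely biparameter piece $\mathcal C_\Gamma$ is handled by the biparameter $Tb$ theorem, the mixed pieces $\mathcal C_\Gamma^{p1},\mathcal C_\Gamma^{p2}$ by the one-parameter $Tb$ theorem, and $P_t$ by the Poisson lemma. The algebraic jump formula then follows exactly as you indicate.

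One more point: in your factored approach the ``main obstacle'' you name---a uniform $L^p$ bound on the composition $Q_{t_1}Q_{t_2}$---is not where the difficulty lies. Those two operators act in separate variables and commute by Fubini, so a uniform bound on the product is immediate from uniform one-parameter bounds on each $Q_{t_j}$ (which follow from the single-variable Cauchy integral theory on Lipschitz graphs). The paper in fact explicitly discusses this in the introduction, arguing that realizing $\mathcal C_\Gamma$ as an iterated one-parameter limit is not available, which is why it develops the biparameter machinery; your factorization implicitly claims the opposite, and you would need to justify carefully the uniform $L^p(\Gamma_j)$ bounds on $Q_{t_j}$ and the joint convergence $Q_{t_1}Q_{t_2}g\to C_1C_2g$ from purely one-parameter input if you want to pursue that route rather than quote the paper's Theorem~\ref{t:Cgammabounds}.
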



In addition to the problems mentioned above, some other boundary value problems related to Theorem \ref{t:ext} can be found in the work of Bochner \cite{Bo}, Weinstock \cite{W}, Stein 
\cite{St2,St3}, Jacewicz \cite{Ja}, and Krantz \cite{Kr1,Kr2}.  These works prove a number results about the behavior of holomorphic functions on domains with smooth boundaries in $\C^n$, but the point of view taken in \cite{Bo,W,St2,St3,Ja,Kr1,Kr2} is different than the one taken in this work.  They start with a holomorphic function $G$ defined on a domain $D$ and make conclusions about the $G$ near or on the boundary $\partial D$.  Whereas we are given a boundary $\Gamma$ with initial data $g$ and construct a holomorphic function $G$
on the domain $\mathbb H_{\Gamma_1}\times\mathbb H_{\Gamma_2}$ whose behavior at the boundary is determined by $g$.  The meaning of boundary behavior for us is described in 
\eqref{holomorphiclimit} and \eqref{limits}.

We take this ``extension from the boundary'' point of view because we want this work to emphasize the boundedness of boundary value singular integral operators that take the place of the partial and biparameter Hilbert transforms from the extension problems above; we call these operators the biparameter and partial Cauchy integral transforms, and they will be defined later in this section.  

It is natural to eventually define Hardy spaces of holomorphic functions associated to our product upper half space in the same way that Hardy spaces are defined on $\mathbb 
H$, $\mathbb H_\Gamma$, $\R^{n+1}_+$, $\R^{n+1}_{L+}$, and $\mathbb H\times\mathbb H$.  These Hardy spaces are related to the holomorphic extension problems briefly described in the beginning of the Introduction.  It is also natural to expect that every holomorphic function in these new Hardy spaces would be realized as one of our extensions from the boundary $\Gamma$.  However, we do not want to deal with the extra technicalities involved with developing these spaces in this work.  Instead we focus on the holomorphic extension problem for $\Gamma$ as stated in Theorem \ref{t:ext}.

The situation in Theorem \ref{t:ext} is more general than holomorphic extension results from \cite{Bo,W,St2,St3,Ja,Kr1,Kr2} in terms of the regularity required for the boundary.  In all of these works, the domain $D$ is assumed to have smooth boundary, at least $C^2$.  Whereas Theorem \ref{t:ext} can be viewed as a boundary result for holomorphic functions on $\mathbb H_{\Gamma_1}\times\mathbb H_{\Gamma_2}$, which requires only Lipschitz type smoothness for the boundary $\Gamma$.  

To prove Theorem \ref{t:ext}, we take an approach related to the ones in \cite{MM,Chan,rF1,GS,St4,CF}, which are more geometric in nature and uses the boundedness of biparameter and partial Hilbert transforms.  In place of the Hilbert transforms, we define biparameter and partial Cauchy integral transforms for $z=(z_1,z_2)\in\Gamma$ and appropriate $g:\Gamma\rightarrow\C$,
\begin{align*}
&\mathcal C_\Gamma g(z)=\lim_{t_1,t_2\rightarrow0^+}\mathcal C_tg(z);& &\mathcal C_tg(z)=\frac{1}{(2\pi i)^2}\int_{\Gamma}\frac{z_1-\xi_1}{(z_1-\xi_1)^2+t_1^2}\frac{z_2-\xi_2}{(z_2-\xi_2)^2+t_2^2}g(\xi)d\xi,\\
&\mathcal C_\Gamma^{p1} g(z)=\lim_{t_1,t_2\rightarrow0^+}\mathcal C_t^{p1}g(z);& &\mathcal C_t^{p1}g(z)=\frac{1}{(2\pi i)^2}\int_{\Gamma}\frac{z_1-\xi_1}{(z_1-\xi_1)^2+t_1^2}\frac{t_2}{(z_2-\xi_2)^2+t_2^2}g(\xi)d\xi,\\
&\mathcal C_\Gamma^{p2} g(z)=\lim_{t_1,t_2\rightarrow0^+}\mathcal C_t^{p2}g(z);& &\mathcal C_t^{p2}g(z)=\frac{1}{(2\pi i)^2}\int_{\Gamma}\frac{t_1}{(z_1-\xi_1)^2+t_1^2}\frac{z_2-\xi_2}{(z_2-\xi_2)^2+t_2^2}g(\xi)d\xi.
\end{align*}
The limits defining $\mathcal C_\Gamma$, $\mathcal C_\Gamma^{p1}$, and $\mathcal C_\Gamma^{p2}$ are taken in the following pointwise sense:  given $c\in\C$ and $c_t\in\C$ for $t=(t_1,t_2)\in(0,\infty)^2$, we say $c_t\rightarrow c$ as $t_1,t_2\rightarrow0^+$ if for all $\epsilon>0$, there exists $\delta>0$ such that $0<t_1,t_2<\delta$ implies $|c_t-c|<\epsilon$.  We also define convergence in normed spaces as $t_1,t_2\rightarrow0^+$:  given a normed function space $X$, $F\in X$, and $F_t\in X$ for $t=(t_1,t_2)\in(0,\infty)^2$, we say $F_t\rightarrow F$ as $t_1,t_2\rightarrow0^+$ if $||F_t-F||_X\rightarrow0$ as $t_1,t_2\rightarrow0^+$.  The operators $\mathcal C_\Gamma g$, $\mathcal C_\Gamma^{p1} g$, and $\mathcal C_\Gamma^{p2} g$ are defined initially as pointwise limits on test functions, and we will prove later that these limits hold in $L^p(\Gamma)$ as well for $1<p<\infty$ and appropriate $g$.  These convergence results will be proved in Sections 5 and 6.  A crucial part of the proof of these convergence results is the $L^p(\Gamma)$ boundedness of $\mathcal C_\Gamma$, $\mathcal C_\Gamma^{p1}$, and $\mathcal C_\Gamma^{p2}$, which we state now in Theorem \ref{t:Cgammabounds}.  

\begin{theorem}\label{t:Cgammabounds}
Let $\Gamma$ be a product Lipschitz surface with small Lipschitz constant in $\C^2$ defined by $\gamma=(\gamma_1,\gamma_2):\R^2\rightarrow\C^2$.  Assume that 
\begin{align*}
&\lim_{|x_1|\rightarrow\infty}\frac{\gamma_1(x_1)}{x_1}=c_1&  &\text{and}& &\lim_{|x_2|\rightarrow\infty}\frac{\gamma_2(x_2)}{x_2}=c_2&
\end{align*}
for some $c_1,c_2\in\C$.  Then the operators $\mathcal C_\Gamma$, $\mathcal C_\Gamma^{p1}$, and $\mathcal C_\Gamma^{p2}$ can be continuously extended to bounded operators on $L^p(\Gamma)$ and for $g\in L^p(\Gamma)$
\begin{align*}
&\lim_{t_1,t_2\rightarrow0^+}\mathcal C_tg=\mathcal C_\Gamma g,& &\lim_{t_1,t_2\rightarrow0^+}\mathcal C_t^{p1}g=\mathcal C_\Gamma^{p1} g,& &\text{and}& &\lim_{t_1,t_2\rightarrow0^+}\mathcal C_t^{p2}g=\mathcal C_\Gamma^{p2} g
\end{align*}
in $L^p(\Gamma)$ when $1<p<\infty$ and pointwise almost everywhere on $\Gamma$.
\end{theorem}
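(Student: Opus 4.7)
The plan is to parametrize $\Gamma$ by $\gamma$ and pull the problem back to $\R^2$. The substitution $f(x) = g(\gamma(x))$ gives an isometric identification of $L^p(\Gamma)$ with $L^p(\R^2, |\gamma_1'(x_1)\gamma_2'(x_2)|\,dx)$, and $\mathcal{C}_t$ becomes an integral operator with kernel
$$
K_t(x,y) = \frac{1}{(2\pi i)^2}\prod_{j=1}^{2}\frac{\gamma_j(x_j)-\gamma_j(y_j)}{(\gamma_j(x_j)-\gamma_j(y_j))^2+t_j^2}\,\gamma_j'(y_j),
$$
and the analogous expressions for $\mathcal{C}_t^{p1}, \mathcal{C}_t^{p2}$ obtained by replacing one holomorphic factor by $t_j$. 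The small Lipschitz hypothesis forces $\operatorname{Re}\gamma_j'(x_j) \geq 1 - \lambda_j > 0$, so $b_1(x_1) = \gamma_1'(x_1)$ and $b_2(x_2) = \gamma_2'(x_2)$ are bounded accretive functions of one variable, and $b(x) = b_1(x_1)b_2(x_2)$ is the natural tensor-product accretive datum in two parameters.

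First I would verify that $K_t$ satisfies biparameter Calder\'on--Zygmund size and mixed-smoothness estimates uniformly in $t = (t_1, t_2)$; because $K_t$ is a tensor product of truncated one-parameter Cauchy-type kernels on Lipschitz graphs, these estimates reduce to the classical single-parameter calculations. Next, following the Coifman--McIntosh--Meyer and David--Journ\'e--Semmes strategy adapted to the product setting, I would apply the biparameter $Tb$ theorem established earlier in the paper to the pulled-back operators with the accretive datum $b$. The key computations are: (a) the ``full'' and ``partial'' pairings $T_t(b)$, $T_t^*(b)$, $T_t(b_1 \otimes 1)$, $T_t(1 \otimes b_2)$, etc., lie in the appropriate product-BMO classes uniformly in $t$---in fact they vanish modulo constants in each variable thanks to the identity $\gamma_j'(y_j)\,dy_j = d\gamma_j(y_j)$ combined with the fact that $\zeta \mapsto 1/(\zeta - \gamma_j(x_j) \mp it_j)$ is holomorphic on each half of $\C \setminus \Gamma_j$, so the inner integrations collapse exactly as they do in the one-parameter Cauchy integral; (b) the biparameter weak boundedness property holds uniformly in $t$, which one verifies on tensor-product bumps over rectangles using the kernel bounds and the accretivity. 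The biparameter $Tb$ theorem then yields uniform $L^2(\R^2, |\gamma_1'\gamma_2'|\,dx)$ bounds for $\mathcal{C}_t, \mathcal{C}_t^{p1}, \mathcal{C}_t^{p2}$, and interpolation and duality upgrade these to uniform $L^p$ bounds for $1 < p < \infty$.

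For the convergence assertion, I would first establish the pointwise limits on a dense class of smooth compactly supported $g$ by splitting the integration into a neighborhood of the singularity (where the holomorphic factor is bounded and dominated convergence applies once the pole has been regularized) and its complement. The uniform $L^p$ bounds from the previous paragraph then upgrade this to $L^p(\Gamma)$-convergence on all of $L^p(\Gamma)$ by a standard $\varepsilon/3$ approximation. Pointwise almost everywhere convergence is obtained by controlling the maximal truncation $g \mapsto \sup_{t_1,t_2 > 0}|\mathcal{C}_t g|$ by a Cotlar-type inequality (leveraging the already-established $L^p$ bounds on $\mathcal{C}_\Gamma, \mathcal{C}_\Gamma^{p1}, \mathcal{C}_\Gamma^{p2}$) together with the strong biparameter Hardy--Littlewood maximal function of Jessen--Marcinkiewicz--Zygmund, reducing a.e.\ convergence to the dense class where it is immediate.

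The principal obstacle is the verification of the $Tb$ hypotheses in two parameters. The weak boundedness property and the product-BMO cancellation of $T_t(b)$ must be formulated over rectangles rather than intervals, and one must keep track of the four distinct pairings arising from tensor products of $b_j$ with $1$. Once these conditions are in place, however, the biparameter Cauchy integral factors cleanly into tensor-product calculations that mirror the one-parameter Coifman--McIntosh--Meyer argument, and the convergence and pointwise statements follow by the usual maximal-function machinery adapted to the biparameter setting.
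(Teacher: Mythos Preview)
Your overall strategy matches the paper's: pull back to $\R^2$, set $b=\gamma_1'\otimes\gamma_2'$, and apply the biparameter $Tb$ theorem to the parameterized operator. Two points in your sketch are genuine gaps, however.

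First, the verification of the $Tb=0$ condition is not as you describe. The paper's biparameter $Tb$ theorem is a \emph{reduced} version requiring exactly $Tb=T^*\tilde b=0$, not merely membership in product $BMO$; this is why the paper never develops product $BMO$. Your holomorphy argument (``the inner integrations collapse'') fails because the integral $\int_{\R}q_{t_1}(\gamma_1(x_1)-\gamma_1(y_1))\gamma_1'(y_1)\,dy_1$ diverges: the integrand is only $O(|y_1|^{-1})$, so you cannot close the contour. The paper instead integrates by parts to pass to the potential $\log((\gamma_1(x_1)-\gamma_1(y_1))^2)$ and shows that the resulting quantity tends to a constant independent of $x_1$ as the cutoff is removed; this is precisely where the hypothesis $\gamma_j(x_j)/x_j\to c_j$ is used, and you never invoke it. Without that hypothesis the limit defining $Tb$ need not exist, so your sketch as written does not establish the cancellation condition.

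Second, the partial operators $\mathcal C_\Gamma^{p1}$ and $\mathcal C_\Gamma^{p2}$ are \emph{not} biparameter Calder\'on--Zygmund operators: in the limit the Poisson factor becomes the identity in one variable, so the kernel is singular on $\{x_2=y_2\}$ rather than on $\{x_1=y_1\}\cap\{x_2=y_2\}$, and the biparameter $Tb$ theorem does not apply. The paper treats these separately via the one-parameter David--Journ\'e--Semmes theorem applied to $\widetilde{\mathcal C}_{\Gamma_j}$ on $\R$, followed by Fubini. As a smaller remark, the paper controls the maximal truncation not by a Cotlar inequality but by the exact identity $\mathcal C_tg=P_t(\mathcal C_\Gamma g)$ (proved by residues), which gives $\mathcal C_\Gamma^*g\leq\mathcal M_S(\mathcal C_\Gamma g\circ\gamma)$ directly; your Cotlar route would work but is more laborious in the biparameter setting.
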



  We take a moment now to discuss why Theorem \ref{t:Cgammabounds} cannot be proved with techniques currently available in the literature; in particular why the analysis of holomorphic functions related to $\mathbb H\times\mathbb H$ and the Hilbert transforms is not applicable to our problem.  Much of the machinery used in the analysis of holomorphic functions related to $\mathbb H\times\mathbb H$ and the Hilbert transforms is not available when we move to the setting of $\Gamma=\Gamma_1\times\Gamma_2$ and the Cauchy integral transforms.  If one defines the biparameter Hilbert transform $H^{bp}$ as $C_\Gamma$ is defined above (but with $\gamma_j(x_j)=x_j$ for $j=1,2$), then with the aid of the Fourier transform it is easy to show that $H^{bp}=H_1H_2$.  Since $H^{bp}$ can be realized as this composition of $H_1$ and $H_2$ in this way, the $L^p(\R^2)$ bounds for $H^{bp}$ trivially follow from those of $H_1$ and $H_2$.  Furthermore, the fact that $H^{bp}=H_1H_2$ says that the two dimensional limit $t_1,t_2\rightarrow0^+$ defining $H^{bp}$ can actually be realized as iterated one dimensional limits $t_1\rightarrow0^+$ and $t_2\rightarrow0^+$.  There is no such formula to write $C_\Gamma$ as a composition $C_\Gamma^{p1}$ and $C_\Gamma^{p2}$ that we know of since the Fourier transform is not a viable tool in this setting; that is, it is not known in general if the two dimensional limit defining $C_\Gamma$ can be realized as an iterated one dimensional limit.  This precludes, at least with the tools currently available, any relatively simple proof of $L^p(\Gamma)$ bounds for $C_\Gamma$, and hence motivates the development of the harmonic analysis theory in this article.

To obtain the pointwise convergence result stated in Theorem \ref{t:Cgammabounds}, we need a little more that then boundedness of the partial and biparameter Cauchy integral transforms. For $z=(z_1,z_2)\in\Gamma$ and appropriate functions $g:\Gamma\to\C$, we define the maximal biparameter Cauchy integral transform
\begin{equation}\label{max : cauchy}
  \mathcal{C}_\Gamma^* g(z)=\sup_{t_1,t_2>0}|\mathcal{C}_tg(z)|,
\end{equation}
where $t=(t_1,t_2)$.  Then, we prove the following boundedness result. 

\begin{theorem}\label{t:MaxCgammabounds}
Let $\Gamma$ be as in Theorem \ref{t:Cgammabounds}. The maximal operator $\mathcal{C}_\Gamma^*$ extends to a bounded operator $\mathcal{C}_\Gamma^*:L^p(\Gamma)\to L^p(\Gamma)$ for $1<p<\infty$. Moreover,  for all $g$ in $L^p(\Gamma)$, $\mathcal{C}_{t}g$ converges to $\mathcal{C}_\Gamma g$ almost everywhere on $\Gamma$.
\end{theorem}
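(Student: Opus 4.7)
My plan is to deduce Theorem \ref{t:MaxCgammabounds} from Theorem \ref{t:Cgammabounds} via a biparameter Cotlar-type inequality. The goal is the pointwise domination
\begin{align*}
\mathcal C_\Gamma^* g(z) \lesssim M_s(\mathcal C_\Gamma g)(z) + M_s(\mathcal C_\Gamma^{p1} g)(z) + M_s(\mathcal C_\Gamma^{p2} g)(z) + M_s(g)(z),
\end{align*}
for $g$ in a dense subclass of $L^p(\Gamma)$, where $M_s$ is the biparameter (strong) Hardy-Littlewood maximal operator adapted to the product structure of $\Gamma$. Given this, the $L^p(\Gamma)$ bound on $\mathcal C_\Gamma^*$ for $1<p<\infty$ is immediate from the boundedness of $M_s$ on $L^p$ of a product space and the bounds afforded by Theorem \ref{t:Cgammabounds}; the resulting operator then extends by density to all of $L^p(\Gamma)$.

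To prove the Cotlar-type estimate, I would fix $z_0=(z_1^0,z_2^0)\in\Gamma$ and $t=(t_1,t_2)$, and introduce the bidisk $B=B_1\times B_2$ on $\Gamma$ with $B_j\subset\Gamma_j$ the ball of radius $t_j$ about $z_j^0$. Decomposing $\Gamma=(2B_1\cup(2B_1)^c)\times(2B_2\cup(2B_2)^c)$ splits the integral defining $\mathcal C_t g(z_0)$ into four pieces. On the doubly-exterior region, standard Calder\'on-Zygmund kernel comparison shows this contribution differs from a bidisk average of $\mathcal C_\Gamma g$ by quantities controlled by $M_s(g)$, so its absolute value is governed by $M_s(\mathcal C_\Gamma g)(z_0)$. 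On the doubly-interior piece, the kernel of $\mathcal C_t$ is bounded by $(t_1t_2)^{-1}$ and the integral is controlled by $M_s(g)(z_0)$. On each mixed piece, the truncated Cauchy kernel factor $\frac{z_j^0-\xi_j}{(z_j^0-\xi_j)^2+t_j^2}$ in the ``interior'' variable is pointwise of the same size as the Poisson-type factor $\frac{t_j}{(z_j^0-\xi_j)^2+t_j^2}$ appearing in the definition of the partial operator $\mathcal C_\Gamma^{pj}$; after comparing with an appropriate bidisk average of that partial operator and using a Kolmogorov-type weak-$(1,1)$ estimate (derivable from the $L^p$ bounds of Theorem \ref{t:Cgammabounds}) to absorb the portion of $g$ supported on $2B$, these pieces are controlled by $M_s(\mathcal C_\Gamma^{p2} g)(z_0)$ and $M_s(\mathcal C_\Gamma^{p1} g)(z_0)$ respectively.

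With the maximal function bound in hand, the almost-everywhere convergence assertion follows from the standard Banach-principle argument. I would pick a dense class $\mathscr D\subset L^p(\Gamma)$ of sufficiently regular functions (for example, smooth compactly supported functions on $\Gamma$) on which the pointwise convergence $\mathcal C_t g(z)\to\mathcal C_\Gamma g(z)$ as $t_1,t_2\to 0^+$ can be verified directly from the kernel structure by dominated convergence. For arbitrary $g\in L^p(\Gamma)$, approximate by $g_n\in\mathscr D$ with $\|g-g_n\|_{L^p(\Gamma)}\to 0$; applying the pointwise bound
\begin{align*}
\limsup_{t_1,t_2\to 0^+}|\mathcal C_t g(z)-\mathcal C_\Gamma g(z)| \leq 2\mathcal C_\Gamma^*(g-g_n)(z) + |\mathcal C_\Gamma(g-g_n)(z)|,
\end{align*}
together with the weak-$(p,p)$ estimates coming from the just-proved maximal bound and Theorem \ref{t:Cgammabounds}, and letting $n\to\infty$, forces the left-hand side to vanish almost everywhere on $\Gamma$.

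The main technical obstacle will be the biparameter Cotlar inequality, specifically the analysis of the two mixed regions. In one parameter the Cotlar inequality is classical, but in the product setting it is not a priori clear which operators should appear on the right-hand side; the identification that the truncated Cauchy kernel in the near variable behaves as the Poisson kernel, and that the associated errors are governed by $\mathcal C_\Gamma^{p1}$ and $\mathcal C_\Gamma^{p2}$, is the heart of the matter and is precisely what motivates the partial operators introduced in this paper. A secondary subtlety, flagged explicitly in the paragraph following Theorem \ref{t:Cgammabounds}, is that the biparameter limit defining $\mathcal C_\Gamma$ is not known to agree with an iterated one-parameter limit; consequently the proof must work directly with the genuine biparameter structure and cannot be reduced to a one-variable-at-a-time argument.
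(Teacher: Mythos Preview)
Your approach is different from the paper's and considerably more involved. The paper does not use a Cotlar-type decomposition at all. Instead, it proves the exact identity
\[
\mathcal C_t g = P_t\,\mathcal C_\Gamma g
\]
for $g$ in a dense class, via a one-line residue computation (Lemma~\ref{semigroup}) showing that $\int_{\Gamma_j} q_{t_j}(z-\xi)\,p_{s_j}(\xi-\zeta)\,d\xi = q_{t_j+s_j}(z-\zeta)$. Once this semigroup relation is in hand, the maximal bound is immediate: $\mathcal C_\Gamma^* g \le \sup_t |P_t(\mathcal C_\Gamma g)| \lesssim \mathcal M_S(\mathcal C_\Gamma g\circ\gamma)$, and one simply composes the $L^p$ bounds for $\mathcal M_S$ and $\mathcal C_\Gamma$. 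The partial operators $\mathcal C_\Gamma^{p1}$, $\mathcal C_\Gamma^{p2}$ never enter the maximal estimate. The a.e.\ convergence then follows by the same density argument you outline.

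Your Cotlar route is in principle workable, but one point deserves care: you invoke a ``Kolmogorov-type weak-$(1,1)$ estimate derivable from the $L^p$ bounds.'' Biparameter singular integrals typically do \emph{not} satisfy weak-$(1,1)$, and $L^p$ boundedness certainly does not imply it. What does work is to replace the weak-$(1,1)$ step by the $L^r$ bound of $\mathcal C_\Gamma$ (and the partial operators) for some fixed $1<r<p$, which yields a Cotlar inequality with $M_r$ in place of $M$ on the right-hand side; this still gives $L^p$ boundedness of the maximal operator for $p>r$. If you pursue your approach, that substitution should be made explicit. The paper's semigroup trick sidesteps this issue entirely and is the more economical argument here, though your method has the advantage of being the kind of argument that would survive in settings where such an exact operator identity is unavailable.
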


We prove Theorem \ref{t:Cgammabounds} using the approach that David-Journ\'e-Semmes used to apply their $Tb$ theorem to prove $L^p$ bounds for Cauchy integral transform in \cite{DJS}.  For this, we prove the following reduced biparameter $Tb$ theorem.

\begin{theorem}\label{t:Tbtheorem}
Let $b_1,\tilde b_1\in L^\infty(\R^{n_1})$ and $b_2,\tilde b_2\in L^\infty(\R^{n_2})$ be para-accretive functions, and define $b(x)=b_1(x_1)b_2(x_2)$ and $\tilde b(x)=\tilde b_1(x_1)\tilde b_2(x_2)$ for $x=(x_1,x_2)\in\R^{n_1+n_2}$.  Also let $T$ be a biparameter operator of Calder\'on-Zygmund type associated to $b$ and $\tilde b$.  If $T$ satisfies the weak boundedness property, mixed weak boundedness properties, and the $Tb=T^*\tilde b=0$ conditions, then $T$ can be continuously extended to a bounded linear operator on $L^p(\R^n)$ for $1<p<\infty$.
\end{theorem}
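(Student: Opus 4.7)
The plan is to follow the David--Journ\'e--Semmes approach to the $Tb$ theorem, adapted to the biparameter setting. First I would establish a biparameter Calder\'on reproducing formula associated to the para-accretive pairs $(b_1,\tilde b_1)$ and $(b_2,\tilde b_2)$: construct cancellative operators $Q_{t}^{b}$ and $Q_{t}^{\tilde b}$ in each parameter (either DJS-type smooth approximations, or Christ-style dyadic analogues adapted to para-accretive functions) that factor as tensor products in the two variables and assemble into a resolution of the identity indexed by scale pairs $(t_1,t_2)\in(0,\infty)^2$. Combined with the biparameter Littlewood--Paley--Stein estimates developed earlier in the paper, this yields a norm equivalence $\|f\|_{L^p(\R^n)} \sim \|S^{b,\tilde b} f\|_{L^p(\R^n)}$ for the associated biparameter square function, so that $L^p$ bounds for $T$ will reduce to $L^2$ bounds plus vector-valued square function estimates.

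Next I would insert two such resolutions around $T$, expressing the bilinear form $\langle Tf,g\rangle$ as a fourfold integral over scales $(s_1,s_2,t_1,t_2)$ of matrix coefficients of the shape $\langle Q_s^{\tilde b}\, T\, Q_t^{b}f,\,g\rangle$. The main task becomes proving Cotlar--Stein type almost-orthogonality estimates for these matrix coefficients with rapid decay in the scale ratios $\max(s_i/t_i,\,t_i/s_i)$ in each parameter $i=1,2$. The four hypotheses partition naturally into roles here: the biparameter Calder\'on--Zygmund kernel estimates supply decay when the two scales in each parameter are comparable; the weak boundedness property handles the fully diagonal regime $s\sim t$; the mixed weak boundedness properties, which encode that $T$ behaves as a tensor of a smooth object in one variable and a cancellative object in the other, control the regimes where one scale ratio is bounded but the other is wild; and the $Tb=T^*\tilde b=0$ hypothesis furnishes the extra vanishing needed for absolute summability in the severely off-diagonal configurations, via the usual trick of replacing the kernel by its $b$-mean-zero adjustment on one of the $Q$'s.

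Once these estimates are in hand, a biparameter Schur/Cotlar--Stein argument yields $L^2$ boundedness of $T$, and the $L^p$ bounds for $1<p<\infty$ then follow by invoking the square function characterization in both directions together with standard vector-valued inequalities. The main technical obstacle, I expect, will be the genuinely mixed regimes where one of the scale ratios is tame while the other is wild: these have no one-parameter analogue, and the mixed weak boundedness hypotheses were formulated precisely for this purpose, but sorting through the four possible combinatorial configurations without double-counting the cancellation is delicate. A second technical point is that para-accretivity (as opposed to strict accretivity) forces one to work with dyadic or smoothed approximations whose good-set structure must be compatible across both parameters; maintaining this compatibility while preserving the tensor-product decomposition of $b$ and $\tilde b$ is where I anticipate the bulk of the bookkeeping will lie.
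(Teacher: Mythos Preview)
Your overall plan---sandwich $T$ between accretive reproducing formulas and run an almost-orthogonality argument---is the right one and is essentially what the paper does. But you have misidentified the role of the mixed weak boundedness property, and if you try to execute the plan as written you will get stuck. The mixed WBP (Definition~\ref{d:mixedWBP}) is \emph{not} a scale-ratio condition: the bumps in each parameter sit at the \emph{same} scale $R_j$, and what distinguishes the mixed condition from the ordinary WBP is that in one parameter the \emph{centers} are far apart ($|x_1-y_1|>4R_1$) while in the other they coincide. It therefore says nothing about ``one scale ratio bounded, the other wild.'' In the paper the mixed WBP is used in Lemma~\ref{l:kernelconditions} to establish the pointwise Littlewood--Paley--Stein \emph{size} bound on the kernels $\theta_{\vec k}(x,y)$ in the mixed \emph{spatial} regime where, say, $|x_1-y_1|\le2^{-k_1+2}$ but $|x_2-y_2|>2^{-k_2+2}$: here the test bumps overlap in the first variable (so the full kernel representation is unavailable) yet are disjoint in the second (so the ordinary WBP gives no $|x_2-y_2|$ decay). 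The scale-ratio decay in the almost-orthogonality step comes entirely from the cancellation of the $D$-operators together with the $Tb=T^*\tilde b=0$ hypotheses, via Lemma~\ref{l:almostorth}; by that stage the WBP and mixed WBP have already been spent.

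The paper's organization also bypasses your four-scale sum. It first telescopes $\<M_{\tilde b}TM_bf,g\>$ into a single biparameter sum $\sum_{\vec k}\sum_{j=1}^4\<\Theta_{\vec k}^jM_bf,M_{\tilde b}g\>$, where each $\Theta_{\vec k}^j$ is a product of $S$'s and $D$'s flanking $T$ at the \emph{same} scale $\vec k$ on both sides. Lemma~\ref{l:kernelconditions} (this is where WBP, mixed WBP, and the full CZ kernel conditions enter) shows each $\theta_{\vec k}^j$ is a genuine biparameter LPS kernel with cancellation on one side inherited from the $D$ factors; the $Tb=T^*\tilde b=0$ conditions then supply the missing cancellation on the $S$ side. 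Only after this reduction is a second reproducing formula inserted (Proposition~\ref{p:dualpairbound}, via Han's operators $\widetilde D_{\vec k}$) to produce the almost-orthogonality decay, and the $L^p$ bound for all $1<p<\infty$ falls out simultaneously from the square-function estimate of Theorem~\ref{t:squarefunction} and the Fefferman--Stein vector-valued maximal inequality---there is no separate $L^2$ Cotlar--Stein step.
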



There have been a number of results for biparameter singular integral operators of Calder\'on-Zygmund type, going back to R. Fefferman, Stein, and Journ\'e, among others.  There were different versions of $T1$ theorems proved in R. Fefferman-Stein \cite{rFS1}, Journ\'e \cite{J}, Pott-Villaroya \cite{PV}, Han-Lin-Lee \cite{HLL}, Ou \cite{O}, and Hart-Lu-Torres \cite{HLT}.  In fact, the recent articles \cite{HLL,O} include biparameter $Tb$ theorems as well.  The formulation of Theorem \ref{t:Tbtheorem} is different than the ones in \cite{HLL,O}, and even the definitions of biparameter Calder\'on-Zygmund operators are different.  In Section 4, we define biparameter singular integral operators relying only on continuity in test function spaces, a full kernel representation, and testing conditions on normalized bumps, whereas in \cite{HLL,O} the singular integral operators addressed are required to have full and partial kernel representations as well as some a priori partial $L^2$ bounds.  In our formulation, we do not use partial kernel representations or partial $L^2$ boundedness hypotheses.  Instead, we introduce a mixed weak boundedness, which is a testing condition similar to the full weak boundedness property used in many of the aforementioned works.    The formulation of Theorem \ref{t:Tbtheorem} in this work is a natural extension of the single parameter theory, and the sufficient conditions seem to be easy to verify, as will be demonstrated in Section 5.
Unfortunately, Theorem \ref{t:Tbtheorem} is still not a full characterization of $L^p$ bounds for biparameter Calder\'on-Zygmund operators since difficulties of working with product $BMO$ persist, but this reduced $Tb=T^*b=0$ Theorem \ref{t:Tbtheorem} is sufficient to prove the the boundedness results in Theorem \ref{t:Cgammabounds} and hence the holomorphic extension result of in Theorem \ref{t:ext}.  The formulation of the biparameter singular integral operators in this work is essentially the same as the one by Hart-Lu-Torres in \cite{HLT}, but we repeat the constructions to fit the accretive function setting in Theorem \ref{t:Tbtheorem}.

Even though we will only apply Theorem \ref{t:Tbtheorem} when $n_1=n_2=1$, we prove it for general dimensions $n_1,n_2\in\N$.  Our strategy to prove Theorem \ref{t:Tbtheorem} is to decompose the operator $T$,
\begin{align*}
\<Tf,g\>=\sum_{\vec k\in\Z^2}\<\Theta_{\vec k}f,g\>,
\end{align*}
where $\Theta_{\vec k}$ are smooth truncations of $T$.  These 
truncations $\Theta_{\vec k}$ are biparameter Littlewood-Paley-Stein operators, which have been studied extensively in the single parameter setting, see e.g. \cite{DJ,DJS,Se,Ha}.  There are a few results for biparameter Littlewood-Paley-Stein operators due to R. Fefferman, Stein, and Journ\'e \cite{rF2,rFS1,rF3,J}, among others.  All of these results are for operators of convolution type.  We prove estimates for the square function associated to a larger class of operators including non-convolution operators, which we call biparameter Littlewood-Paley-Stein operators.  In particular, we prove bounds for square function operators associated to biparameter Littlewood-Paley-Stein operators, defined by
\begin{align}
Sf(x)^2=\sum_{\vec k\in\Z}|\Theta_{\vec k}f(x)|^2\label{sqfunctiondefn}
\end{align}
for $x\in\R^n$ and appropriate $f:\R^n\rightarrow\C$.

\begin{theorem}\label{t:squarefunction}
Let $b_1\in L^\infty(\R^{n_1})$ and $b_2\in L^\infty(\R^{n_2})$ be para-accretive functions, and define $b(x)=b_1(x_1)b_2(x_2)$ for $x=(x_1,x_2)\in\R^{n_1+n_2}$.  Also let $\Theta_{\vec k}$ for $\vec k\in\Z^2$ be a collection of biparameter Littlewood-Paley-Stein operators with kernels $\theta_{\vec k}$.  If 
\begin{align*}
\int_{\R^{n_1}}\theta_{\vec k}(x,y)b_1(y_1)dy_1=\int_{\R^{n_2}}\theta_{\vec k}(x,y)b_2(y_2)dy_2=0
\end{align*}
for all $\vec k\in\Z^2$ and $x,y\in\R^n$, then $||Sf||_{L^p(\R^n)}\less||f||_{L^p(\R^n)}$ for all $f\in L^p(\R^n)$ when $1<p<\infty$.  Note that $S$ is the square function operator defined in \eqref{sqfunctiondefn}
\end{theorem}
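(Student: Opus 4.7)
The plan is to prove Theorem \ref{t:squarefunction} in two stages: first establish the $L^2(\R^n)$ bound via almost orthogonality against a biparameter Calder\'on reproducing formula adapted to the product para-accretive function $b$, then upgrade to $L^p$ for $1<p<\infty$ by recognizing the linearized square function as a vector-valued biparameter Calder\'on-Zygmund operator.

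For the $L^2$ stage, for each $j\in\{1,2\}$ the para-accretivity of $b_j$ supplies (by the David-Journ\'e-Semmes / Coifman construction) a one-parameter Calder\'on reproducing formula of the form $f_j=\sum_{\ell_j\in\Z}D_{\ell_j}^{(j)}M_{b_j}\widetilde D_{\ell_j}^{(j)}f_j$, where $D_{\ell_j}^{(j)}$ and $\widetilde D_{\ell_j}^{(j)}$ have kernels at scale $2^{-\ell_j}$ with suitable decay, smoothness, and cancellation against $b_j$, and $M_{b_j}$ is multiplication by $b_j$. Tensoring the two one-parameter formulas gives a biparameter reproducing formula $f=\sum_{\vec\ell\in\Z^2}D_{\vec\ell}M_b\widetilde D_{\vec\ell}f$, into which we insert to obtain
\begin{equation*}
\<\Theta_{\vec k}f,g\>=\sum_{\vec\ell\in\Z^2}\<\Theta_{\vec k}D_{\vec\ell}M_b\widetilde D_{\vec\ell}f,g\>.
\end{equation*}
The cancellation hypothesis on $\theta_{\vec k}$ against $b_1,b_2$ in each variable together with the cancellation of $D_{\vec\ell}$ against $b$ permits a biparameter almost orthogonality estimate, proved separately in each variable by trading cancellation on one factor against smoothness on the other, yielding
\begin{equation*}
||\Theta_{\vec k}D_{\vec\ell}M_b||_{L^2\to L^2}\less 2^{-\epsilon|k_1-\ell_1|}2^{-\epsilon|k_2-\ell_2|}
\end{equation*}
for some $\epsilon>0$. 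A Cotlar-Stein argument on $\Z^2$, or equivalently Young's convolution inequality applied to the resulting double sum, then gives
\begin{equation*}
||Sf||_{L^2}^2=\sum_{\vec k}||\Theta_{\vec k}f||_{L^2}^2\less\sum_{\vec\ell}||\widetilde D_{\vec\ell}f||_{L^2}^2\less ||f||_{L^2}^2,
\end{equation*}
where the final inequality is the standard biparameter $L^2$ Littlewood-Paley bound for the reproducing system built from $b$.

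For the $L^p$ stage, linearize by viewing $\vec\Theta:f\mapsto\{\Theta_{\vec k}f\}_{\vec k\in\Z^2}$ as mapping $\C$-valued functions to $\ell^2(\Z^2)$-valued ones. By Minkowski in $\ell^2$, the operator-valued kernel $\{\theta_{\vec k}(x,y)\}_{\vec k}$ inherits biparameter Calder\'on-Zygmund size and smoothness bounds in each variable separately from the Littlewood-Paley-Stein estimates assumed on the individual $\theta_{\vec k}$. With the $L^2$ bound in hand, an application of the biparameter vector-valued Calder\'on-Zygmund theory in the spirit of Journ\'e and R. Fefferman (as referenced in the paper's introduction) extends the bound to all $1<p<\infty$.

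The hard part: the principal obstacle is executing the almost orthogonality estimate cleanly in the product para-accretive setting. One must thread $M_b=M_{b_1}M_{b_2}$ through the composition $\Theta_{\vec k}D_{\vec\ell}$ so as to invoke, in each coordinate, either the cancellation of $\theta_{\vec k}$ against $b_j$ (relevant when $k_j>\ell_j$) or the cancellation of $D_{\vec\ell}$ against $b_j$ (relevant when $k_j\leq\ell_j$), handling all four regimes separately and making sure that a scale mismatch in one coordinate does not destroy the gain in the other. The $L^p$ bootstrap is also delicate because standard single-parameter Calder\'on-Zygmund decompositions do not transfer directly to two parameters, so one must lean on Journ\'e-type covering lemmas or the biparameter vector-valued machinery that is developed elsewhere in the paper.
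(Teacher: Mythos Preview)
Your outline diverges from the paper's route, and the $L^p$ stage has a real gap.

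The paper does not argue in two stages. It handles all $1<p<\infty$ simultaneously by proving a \emph{pointwise} almost orthogonality estimate (Lemma~\ref{l:almostorth}),
\[
|\Theta_{\vec k}M_b E_{\vec j}M_bf(x)|\less 2^{-\epsilon|k_1-j_1|-\epsilon|k_2-j_2|}\,\mathcal M_S\big(D_{\vec j}M_bf\big)(x),
\]
where $E_{\vec j}=\widetilde D_{\vec j}M_bD_{\vec j}$ comes from Han's accretive Calder\'on formula (Lemma~\ref{l:Econverge}). Dualizing the square function against arbitrary $\{g_{\vec k}\}$ with $\ell^2$ norm at most $1$, applying Cauchy--Schwarz in $\vec k,\vec j$, then the Fefferman--Stein vector-valued inequality for $\mathcal M_S$, and finally the tensor Littlewood--Paley bound of Lemma~\ref{l:vvbound} for $D_{\vec j}=D_{j_1}^{b_1}D_{j_2}^{b_2}$ gives the $L^p$ bound directly. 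The strong maximal function absorbs the biparameter geometry, so no biparameter Calder\'on--Zygmund extrapolation is ever invoked.

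Your second stage, by contrast, passes from $L^2$ to $L^p$ by citing ``biparameter vector-valued Calder\'on--Zygmund theory in the spirit of Journ\'e and R.~Fefferman.'' This is the gap: in the product setting, $L^2$ boundedness together with biparameter standard kernel bounds is not known to yield $L^p$ boundedness; every available theorem (Journ\'e, Pott--Villarroya, Martikainen, and the formulations in \cite{HLL,O}) requires additional structure---partial kernel representations, Journ\'e's vector condition, or partial $BMO$/Carleson hypotheses---none of which you verify for the $\ell^2(\Z^2)$-valued kernel $\{\theta_{\vec k}\}$. The Littlewood--Paley--Stein conditions \eqref{size}--\eqref{regy2} give regularity in each variable separately, not the mixed second differences that the biparameter kernel definition demands, and the $\Theta_{\vec k}$ are not tensor products, so the iteration trick behind Lemma~\ref{l:vvbound} is unavailable. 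Your last paragraph concedes the difficulty but defers to ``machinery developed elsewhere in the paper''; no such machinery is developed---the paper's whole point is to sidestep it via the pointwise maximal estimate above.

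A smaller issue: with your reproducing formula $f=\sum_{\vec\ell}D_{\vec\ell}M_b\widetilde D_{\vec\ell}f$, the composition to be estimated is $\Theta_{\vec k}D_{\vec\ell}$ with no $M_b$ between the factors, so the hypothesis $\int\theta_{\vec k}(x,y)b_j(y_j)\,dy_j=0$ cannot be invoked directly (it is $b_j$-weighted cancellation, not plain cancellation). The paper handles this by bounding $\Theta_{\vec k}M_b$ on functions with vanishing $b$-moments and removing $M_b$ at the end via $b^{-1}\in L^\infty$.
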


In fact, we will prove Theorem \ref{t:squarefunction} for a slightly larger class of operators than the biparameter Littlewood-Paley-Stein operators.  These classes of operators will be defined in the coming sections, and it will be specified how they can be generalized to a slightly larger class by weakening the regularity properties of $\theta_{\vec k}$.

The formulations and proofs of Theorems \ref{t:Tbtheorem} and \ref{t:squarefunction} were introduced by Hart-Lu-Torres \cite{HLT} in a slightly different setting, where $b=\tilde b=1$.  In Sections 3 and 4, we reproduce the proofs from \cite{HLT}, and address the additional technical difficulties that arise when accretive functions $b$ and $\tilde b$ are used in place of $1$.

This article is organized in the following way.  In Section 2, we prove the holomorphic extension result in Theorem \ref{t:ext} assuming that Theorem \ref{t:Cgammabounds} holds.  In Section 3, we develop some biparameter Littlewood-Paley-Stein theory and prove Theorem \ref{t:squarefunction}.  In Section 4, we prove the biparameter $Tb$ Theorem \ref{t:Tbtheorem} using results from Section 3.  Finally in Section 5, we prove part of Theorem \ref{t:Cgammabounds} by applying Theorem \ref{t:Tbtheorem} to a parameterized version of $\mathcal C_\Gamma$, \ER and we prove Theorem \ref{t:MaxCgammabounds}.  In Section 6 we prove the rest of Theorem \ref{t:Tbtheorem} by applying the one parameter $Tb$ theorem from \cite{DJS} to parameterized versions of $\mathcal C_\Gamma^{p1}$ and $\mathcal C_\Gamma^{p2}$.

\section{Holomorphic Extension from Product Lipschitz Domains}

Fix Lipschitz functions $L_1,L_2:\R\rightarrow\R$ with Lipschitz constants $\lambda_1<1$ and $\lambda_2<1$.  Define $\gamma_1(x_1)=x_1+iL_1(x_1)$, $\gamma_2(x_2)=x_2+iL_2(x_2)$, and $\gamma(x)=(\gamma_1(x_1),\gamma_2(x_2))$ for $x=(x_1,x_2)\in\R^2$.  Then $\Gamma=\Gamma_1\times\Gamma_2$ is a product Lipschitz surface with small Lipschitz constants in $\C^2$, where $\Gamma_1=\gamma_1(\R)$ and $\Gamma_2=\gamma_2(\R)$.  It follows that
\begin{align*}
0<1-\lambda_j^2&\leq \frac{(x_j-y_j)^2-(L_j(x_j)-L_j(y_j))^2}{(x_j-y_j)^2}=\frac{|Re\[(\gamma_j(x_j)-\gamma_j(y_j))^2\]|}{(x_j-y_j)^2}\leq2.
\end{align*}
Throughout this work, we will use the fact that $Re\[(\gamma_j(x_j)-\gamma_j(y_j))^2\]$ and $(x_j-y_j)^2$ are comparable with constants only depending on the Lipschitz constants of $\gamma$, not on $x_j$ and $y_j$.  We also remark that the norms of $g$ and $g\circ\gamma$ are comparable in the following sense:  for any $g\in L^p(\Gamma)$,
\begin{align}
||g\circ\gamma||_{L^p(\R^2)}^p&\leq ||(\gamma_1')^{-1}||_{L^\infty(\R)}||(\gamma_2')^{-1}||_{L^\infty(\R)}||g||_{L^p(\Gamma)}^p\leq||g||_{L^p(\Gamma)}^p\notag\\
&\leq||\gamma_1'||_{L^\infty(\R)}||\gamma_2'||_{L^\infty(\R)}||g\circ\gamma||_{L^p(\R^2)}^p\leq2||g\circ\gamma||_{L^p(\R^2)}^p.\label{equivnorm}
\end{align}
Note that since $Re[\gamma_j'(x_j)]=1$ for all $x_j\in\R$, we have $|\gamma_j'(x_j)|\geq Re[\gamma_j'(x_j)]=1$ for all $x_j\in\R$.  Now given a function $g:\Gamma\rightarrow\C$, we define for $\omega=(\omega_{t_1},\omega_{t_2})=(z_1+it_1,z_2+it_2)\in(\C\backslash\Gamma_1)\times(\C\backslash\Gamma_2)$ where $(z_1,z_2)\in\Gamma$ and $t_1,t_2\neq0$,
\begin{align}
G(\omega_{t_1},\omega_{t_2})=\frac{1}{(2\pi i)^2}\int_{\Gamma}\frac{g(\xi)d\xi}{(\xi_1-\omega_{t_1})(\xi_2-\omega_{t_2})}.\label{Gdefn}
\end{align}
It follows that 
\begin{align*}
&G(\omega_{t_1},\omega_{t_2})=\frac{1}{4}\int_{\Gamma}\(p_{t_1}(z_1-\xi_1)p_{t_2}(z_2-\xi_2)-q_{t_1}(z_1-\xi_1)q_{t_2}(z_2-\xi_2)\phantom{\int}\right.\\
&\left.\phantom{\int}\hspace{2.9cm}+iq_{t_1}(z_1-\xi_1)p_
{t_2}(z_2-\xi_2)+ip_{t_1}
(z_1-\xi_1)q_{t_2}(z_2-\xi_2)\)g(\xi)d\xi,
\end{align*}
where
\begin{align*}
&p_{t_j}(\omega_j)=\frac{1}{\pi}\frac{t_j}{\omega_j^2+t_j^2}& &\text{and}& &q_{t_j}(\omega_j)=\frac{1}{\pi}\frac{\omega_j}{\omega_j^2+t_j^2}& &\text{for }\omega_j\in\C.&
\end{align*}
Also define for $t=(t_1,t_2)\in(0,\infty)^2$, $g_1:\Gamma_1\rightarrow\C$, $g_2:\Gamma_2\rightarrow\C$, $g:\Gamma\rightarrow\C$, and $z=(z_1,z_2)\in\Gamma$, the operators
\begin{align*}
&P_{t_1}g_1(z_1)=\int_{\Gamma_1}p_{t_1}(z_1-\xi_1)g_1(\xi_1)d\xi_1,& &\hspace{-1.2cm}P_{t_2}g_2(z_2)=\int_{\Gamma_2}p_{t_2}(z_2-\xi_2)g_2(\xi_2)d\xi_2,&\\
&\text{and }\hspace{.25cm}P_tg(z)=\int_{\Gamma}p_{t_1}(z_1-\xi_1)p_{t_2}(z_2-\xi_2)g(\xi)d\xi.
\end{align*}
We use the indices of $P_{t_1}$, $P_{t_2}$, and $P_t$ to identify the operators.  Note that $P_tg=P_{t_1}P_{t_2}g$ for $g:\Gamma\rightarrow\C$, where we use the notation
\begin{align*}
&P_{t_1}g(z)=\int_{\Gamma_1}p_{t_1}
(z_1-\xi_1)g(\xi_1 , z_2)d\xi_1& &\text{and}& 
&P_{t_2}g(z)=\int_{\Gamma_2}p_{t_2}(z_2-\xi_2)g(z_1
, \xi_2)d\xi_2&
\end{align*}
This is an abuse of notation, but it  is clear in context which operator is being used.  We start with a lemma about the convergence of the operators $P_{t_1}g$, $P_{t_2}g$, and $P_tg$ for $g\in L^p(\Gamma)$.

\begin{lemma}\label{l:Poissonconverge}
Let $\Gamma$ be a product Lipschitz surface with small Lipschitz constants in $\C^2$ and $g\in L^p(\Gamma)$ for some $1<p<\infty$.  Then
\begin{align*}
&\lim_{t_1\rightarrow0^+}P_{t_1}g=g,& &\lim_{t_2\rightarrow0^+}P_{t_2}g=g,& &\text{and}& &\lim_{t_1,t_2\rightarrow0^+}P_tg=g,&
\end{align*}
where each limit holds in the topology of $L^p(\Gamma)$ and pointwise almost everywhere on $\Gamma$.
\end{lemma}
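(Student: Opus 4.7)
The plan is to reduce the convergence statements on the complex curves $\Gamma_1$, $\Gamma_2$, $\Gamma$ to classical one-dimensional approximate-identity facts on $\R$, by pulling back via $\gamma$ and dominating the complex kernels $p_{t_j}$ by the standard Poisson kernel. The product statement will follow from the two partial ones via the factorization $P_tg = P_{t_1}P_{t_2}g$ together with uniform $L^p$-boundedness, so the main work concerns the partial operators $P_{t_1}$ and $P_{t_2}$.

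For the uniform $L^p$ bound I would change variables and write
\begin{equation*}
P_{t_1} g(\gamma(x)) = \frac{1}{\pi} \int_{\R} \frac{t_1\,\gamma_1'(y_1)}{(\gamma_1(x_1)-\gamma_1(y_1))^2 + t_1^2}\, g(\gamma_1(y_1), \gamma_2(x_2))\, dy_1.
\end{equation*}
Using $|\gamma_j'|\le 1+\lambda_j$ together with
\begin{equation*}
\bigl|(\gamma_1(x_1)-\gamma_1(y_1))^2 + t_1^2\bigr| \ge Re\[(\gamma_1(x_1)-\gamma_1(y_1))^2\] + t_1^2 \ge (1-\lambda_1^2)(x_1-y_1)^2 + t_1^2
\end{equation*}
from the comparison at the start of this section, the modulus of the kernel is bounded by a constant, depending only on $\lambda_1$, times the standard Poisson kernel $\tfrac{1}{\pi}\tfrac{t_1}{(x_1-y_1)^2+t_1^2}$ on $\R$. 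Young's inequality in the $x_1$ variable, combined with the norm equivalence \eqref{equivnorm}, then yields $\|P_{t_1}g\|_{L^p(\Gamma)} \less \|g\|_{L^p(\Gamma)}$ uniformly in $t_1>0$; the same holds for $P_{t_2}$ and hence for $P_t$.

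For pointwise convergence on a dense class, I would first verify $\int_{\Gamma_1}p_{t_1}(z_1-\xi_1)\,d\xi_1 = 1$ by a contour-deformation argument. The map $\xi_1\mapsto p_{t_1}(z_1-\xi_1)$ is holomorphic off $\xi_1 = z_1\pm it_1$, and because $\Gamma_1$ has Lipschitz constant less than $1$, the point $z_1+it_1$ lies strictly above $\Gamma_1$ while $z_1-it_1$ lies strictly below. Deforming $\Gamma_1$ to the horizontal line $\{\zeta\in\C:Im(\zeta)=Im(z_1)\}$, with vanishing contributions at infinity coming from the $O(1/\xi_1^2)$ decay of $p_{t_1}$ and the at-most-linear growth of $\gamma_1$, reduces the integral to $\int_{\R}\tfrac{1}{\pi}\tfrac{t_1}{y_1^2+t_1^2}\,dy_1 = 1$. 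With this identity in hand, for any $g\in C_c(\Gamma)$ the standard approximate-identity argument --- writing $P_{t_1}g(z)-g(z)$ as an integral of $p_{t_1}(z_1-\xi_1)\bigl(g(\xi_1,z_2)-g(z_1,z_2)\bigr)$ and splitting into $|x_1-y_1|<\delta$ (controlled by uniform continuity) and its complement (controlled by the decay of $p_{t_1}$) --- gives $P_{t_1}g(z)\to g(z)$ pointwise on $\Gamma$ as $t_1\to 0^+$. The analogous statement holds for $P_{t_2}$.

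To extend to all of $L^p(\Gamma)$ and pointwise almost everywhere, the same kernel domination shows
\begin{equation*}
\sup_{t_1>0}|P_{t_1}g(\gamma(x))| \less M_{x_1}(g\circ\gamma)(x),
\end{equation*}
where $M_{x_1}$ is the Hardy-Littlewood maximal operator acting in the first variable; analogously for $P_{t_2}$, and for $P_t$ bounded by $M_{x_1}M_{x_2}(g\circ\gamma)(x)$. These are of strong type $(p,p)$ for $1<p<\infty$. A standard density-plus-maximal-inequality argument --- approximating $g\in L^p(\Gamma)$ by $g_n\in C_c(\Gamma)$, using the uniform $L^p$ bound on $P_{t_1}$ to handle the $L^p$-errors, and using the maximal bound with Chebyshev's inequality to control the exceptional set --- lifts the convergence to arbitrary $g\in L^p(\Gamma)$, both in $L^p$ and pointwise almost everywhere. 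The joint limit for $P_t$ follows from the factorization $P_t = P_{t_1}P_{t_2}$ together with $\|P_tg - g\|_{L^p} \le \|P_{t_1}(P_{t_2}g - g)\|_{L^p} + \|P_{t_1}g - g\|_{L^p}$ and a parallel splitting pointwise. The main obstacle is the contour-deformation identity $\int_{\Gamma_1}p_{t_1}(z_1-\xi_1)\,d\xi_1 = 1$: although $p_{t_1}$ has the familiar Poisson form, on the complex curve $\Gamma_1$ it is complex-valued and not a positive kernel, so the approximate-identity property is not automatic and depends on the small Lipschitz constant keeping the poles $z_1\pm it_1$ on opposite sides of $\Gamma_1$.
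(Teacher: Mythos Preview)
Your proposal is correct and follows essentially the same route as the paper: pull back to $\R^2$ via $\gamma$, dominate $|\widetilde p_{t_j}|$ by the real Poisson kernel using $Re[(\gamma_j(x_j)-\gamma_j(y_j))^2]\ge(1-\lambda_j^2)(x_j-y_j)^2$, verify $\int_{\Gamma_j}p_{t_j}=1$ by a contour/residue argument, and then run the standard approximate-identity plus maximal-function machinery, with the joint limit handled via the splitting $P_tg-g=P_{t_1}(P_{t_2}g-g)+(P_{t_1}g-g)$.

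Two small remarks. First, the paper computes $\int_{\Gamma_j}p_{t_j}=1$ by closing $\Gamma_j$ with a large upper semicircle and picking up the residue at $z_j+it_j$, rather than by deforming to a horizontal line; both work, but the residue version avoids having to argue that the region between $\Gamma_1$ and the horizontal line through $z_1$ contains neither pole. Second, your attribution of the pole-separation to the small Lipschitz constant is slightly off: since $\Gamma_j$ is a graph over $\R$, the point $z_j+it_j=\gamma_j(x_j)+it_j$ lies above $\Gamma_j$ and $z_j-it_j$ below for any Lipschitz constant. Where $\lambda_j<1$ is genuinely used is in the kernel domination step, to ensure $|(\gamma_j(x_j)-\gamma_j(y_j))^2+t_j^2|\ge(1-\lambda_j^2)(x_j-y_j)^2+t_j^2$.
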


\begin{proof}
We first verify that $P_{t_j}1=1$ for each $j=1,2$.  Let $R>0$ and 
\begin{align*}
E_R=\{z_j\in\Gamma_j:|z_j|\leq R\}\cup \{z_j\in\C:|z_j|=R,\;Im(z_j)>L_j(Re(z_j))\}.
\end{align*}
$E_R$ is a closed, and for $R$ sufficiently large, it defines the boundary of an open, simply connected region $U_R=\{z_j\in\C:|z_j|<R,\;Im(z_j)>L_j(Re(z_j))\}$.  For $z_j\in\Gamma_j$,$t_j>0$, and $R$ sufficiently large, it follows that $z_j+it_j\in U_R$ and $z_j-it_j\notin U_R$.  Then
\begin{align*}
\frac{t_j}{\xi_j-(z_j-it_j)}
\end{align*}
is holomorphic in $\xi_j$ on $U_R$ for such $z_j$, $t_j$, and $R$.  Using the decay of $p_{t_j}$ and a residue theorem, it follows that
\begin{align*}
\int_{\Gamma_j}p_{t_j}(z_j-\xi_j)d\xi_j&=\lim_{R\rightarrow\infty}\frac{1}{\pi}\int_{E_R}\frac{t_j}{(\xi_j-(z_j+it_j))(\xi_j-(z_j-it_j))}d\xi_j\\
&=\lim_{R\rightarrow\infty}\frac{1}{\pi}\frac{2\pi i\,t_j}{(z_j+it_j)-(z_j-it_j)}=1.
\end{align*}
Consider the following parameterized versions of $P_t$, $P_{t_1}$, and $P_{t_2}$:  for $f:\R^2\rightarrow\C$ and $x\in\R^2$
\begin{align*}
&\widetilde P_{t_1}f(x)=\int_{\R}p_{t_1}(\gamma_1(x_1)-\gamma_1(y_1))\gamma_1'(y_1)f(y_1,x_2)dy_1,\\
&\widetilde P_{t_2}f(x)=\int_{\R}p_{t_2}(\gamma_2(x_2)-\gamma_2(y_2))\gamma_2'(y_2)f(x_1,y_2)dy_2,\text{ and }\\
&\widetilde P_tf(x)=\widetilde P_{t_1}\widetilde P_{t_2}f(x)=\int_{\R^2}p_{t_1}(\gamma_1(x_1)-\gamma_1(y_1))p_{t_2}(\gamma_2(x_2)-\gamma_1(y_2))\gamma_1'(y_1)\gamma_2'(y_2)f(y)dy.
\end{align*}
The kernels of $\widetilde P_{t_1}$, $\widetilde P_{t_2}$, and $\widetilde P_t$ are 
\begin{align*}
&\widetilde p_{t_1}(x_1,y_1)=p_{t_1}(\gamma_1(x_1)-\gamma_1(y_1))\gamma_1'(y_1),& &\widetilde p_{t_2}(x_2,y_2)=p_{t_2}(\gamma_2(x_2)-\gamma_2(y_2))\gamma_2'(y_2),\\
&\text{and }\;\;\;\widetilde p_t(x,y)=\widetilde p_{t_1}(x_1,y_1)\widetilde p_{t_2}(x_2,y_2),& &\hspace{-.75cm}\text{ respectively.}
\end{align*}
Note that $\widetilde P_{t_j}1(x_j)=P_{t_j}1(\gamma_j(x_j))=1$ for all $x_j\in\R$.  Also since the Lipschitz constant of $L_1$ and $L_2$ are small, it follows that
\begin{align*}
|\widetilde 
p_{t_j}(x_j,y_j)|&=\frac{1}{\pi}\left|\frac{t_j|\gamma'_j(y_j)|}{t_j^2+(\gamma_j(x_j)-\gamma_j(y_j))^2}\right| \leq\frac { t_j }{t_j^2+(1-\lambda_j^2)(x_j-y_j)^2}\less\frac{t_j^{ -1}}{(1+t_j^{-1}|x_j-y_j|)^2}.
\end{align*}
Then $\{\widetilde p_{t_j}:t_j>0\}$ forms an approximation to identity on $\R$ for each $j=1,2$.  Fix $g\in L^p(\Gamma)$ for some $1<p<\infty$.  It follows that $g\circ\gamma\in L^p(\R^2)$, and hence that $g\circ\gamma(\cdot,x_2)\in L^p(\R)$ for almost every $x_2\in\R$.  Now fix $x_2\in\R$ outside of an appropriate exceptional set, so that $||g\circ\gamma(\cdot,x_2)||_{L^p(\R)}<\infty$.  It follows that $g\circ\gamma(\cdot,x_2)\in L^p(\R)$ and hence that
\begin{align*}
&\lim_{t_1\rightarrow0^+}||\widetilde P_{t_1}(g\circ\gamma)(\cdot,x_2)-g\circ\gamma(\cdot,x_2)||_{L^p(\R)}=0.
\end{align*}
By dominated convergence, it also follows that
\begin{align*}
&\lim_{t_1\rightarrow0^+}||\widetilde P_{t_1}(g\circ\gamma)-g\circ\gamma||_{L^p(\R^2)}^p=\int_\R\lim_{t_1\rightarrow0^+}||\widetilde P_{t_1}(g\circ\gamma)(\cdot,x_2)-g\circ\gamma(\cdot,x_2)||_{L^p(\R)}^pdx_2=0.
\end{align*}
Therefore $\widetilde P_{t_1}(g\circ\gamma)\rightarrow g\circ\gamma$ in $L^p(\R^2)$, and in light of \eqref{equivnorm} it easily follows that $P_{t_1}g\rightarrow g$ in $L^p(\Gamma)$.  By symmetry, it follows that $P_{t_2}g\rightarrow g$ in $L^p(\Gamma)$ as well.  Now for $g\in L^p(\Gamma)$, we verify that $P_tg\rightarrow g$ in $L^p(\Gamma)$ as $t_1,t_2\rightarrow0^+$ for $1<p<\infty$, as defined in the introduction.  First, define $\mathcal M_1$ to be the Hardy-Littlewood maximal function acting on the first variable of a function $f:\R^2\rightarrow\C$, i.e.
\begin{align*}
\mathcal M_1f(x)=\sup_{I\ni x_1}\frac{1}{|I|}\int_I|f(y_1,x_2)|dy_1,
\end{align*}
where the supremum is taken over all intervals $I\subset\R$ that contain $x_1$.  It is not hard to verify that $\mathcal M_1$ is bounded on $L^p(\R^2)$ for $1<p\leq\infty$ and that $|P_{t_1}h(\gamma(x))|\less\mathcal M_1(h\circ\gamma)(x)$ uniformly in $t_1>0$ for any $h\in L^p(\Gamma)$.  The $L^p(\Gamma)$ convergence of $P_tg$ follows:
\begin{align*}
\lim_{t_1,t_2\rightarrow0}||P_tg-g||_{L^p(\Gamma)}&\leq\lim_{t_1,t_2\rightarrow0}||P_{t_1}(P_{t_2}g-g)||_{L^p(\Gamma)}+||P_{t_1}g-g||_{L^p(\Gamma)}\\
&\less\lim_{t_1,t_2\rightarrow0}||\mathcal M_1(\widetilde P_{t_2}(g\circ\gamma)-g\circ\gamma)||_{L^p(\R^2)}+||P_{t_1}g-g||_{L^p(\Gamma)}\\
&\less\lim_{t_2\rightarrow0}||\widetilde P_{t_2}(g\circ\gamma)-g\circ\gamma||_{L^p(\R^2)}+\lim_{t_1\rightarrow0}||P_{t_1}g-g||_{L^p(\Gamma)}=0.
\end{align*}
In the last line, we use that $\widetilde P_{t_2}(g\circ\gamma)\rightarrow g\circ\gamma$ in $L^p(\R^2)$ and that $P_{t_1}(g\circ\gamma)\rightarrow g\circ\gamma$ in $L^p(\R^2)$.  This completes the proof of the $L^p(\Gamma)$ convergence properties in Lemma \ref{l:Poissonconverge}.  Now we prove the pointwise convergence results.  For $g\in L^p(\Gamma)$, it follows that $g\circ\gamma(\cdot,x_2)\in L^p(\R)$ for almost every $x_2\in\R$.  For a fixed $x_2\in\R$ outside of an appropriate measure zero set, by the Lebesgue differentiation theorem it follows that
\begin{align*}
\lim_{t_1\rightarrow0^+}\widetilde P_{t_1}(g\circ\gamma)(x_1,x_2)=g(\gamma(x_1,x_2))
\end{align*}
for almost every $x_1\in\R$.  Hence $\widetilde P_{t_1}(g\circ\gamma)\rightarrow g\circ\gamma$ as $t_1\rightarrow0^+$ pointwise almost everywhere in $\R^2$ and hence that $P_{t_1}g\rightarrow g$ as $t_1\rightarrow0^+$ pointwise almost everywhere in $\Gamma$.  By symmetry, $\widetilde P_{t_2}(g\circ\gamma)\rightarrow g\circ\gamma$ as $t_2\rightarrow0^+$ pointwise almost everywhere in $\R^2$ and hence that $P_{t_2}g\rightarrow g$ as $t_2\rightarrow0^+$ pointwise almost everywhere in $\Gamma$.

Now we verify the pointwise convergence for $P_tg$ on $\Gamma$.  Fix $x\in\R^2$ such that $\widetilde P_{t_1}(g\circ\gamma)(x)\rightarrow g\circ\gamma(x)$ as $t_1\rightarrow0^+$ and $||g\circ\gamma(\cdot,x_2)||_{L^p(\R)}<\infty$, which is true for almost every $x\in\R^2$.  Now we bound
\begin{align}
|\widetilde P_t(g\circ\gamma)(x)-g\circ\gamma(x)|&\leq|\widetilde P_{t_1}(\widetilde P_{t_2}(g\circ\gamma)-(g\circ\gamma))(x)|+|\widetilde P_{t_1}(g\circ\gamma)(x)-(g\circ\gamma)(x)|\notag\\
&\less\int_{\R}p_{t_1}
(\gamma_1(x_1)-\gamma_1(y_1))|\widetilde 
P_{t_2}(g\circ\gamma)(y_1,x_2)-(g\circ\gamma)(y_1,
x_2)|dy_1\label{twoterms}\\
&\hspace{5cm}+|\widetilde P_{t_1}(g\circ\gamma)(x)-(g\circ\gamma)(x)|.\notag
\end{align}
We verify that the first term of \eqref{twoterms} tends to zero as $t_1,t_2\rightarrow0^+$:  let $\epsilon>0$.  Since $\widetilde P_{t_2}(g\circ\gamma)(y_1,x_2)\rightarrow (g\circ\gamma)(y_1,x_2)$ pointwise as $t_2\rightarrow0^+$ for almost every $y_1\in\R$, there exists $\delta>0$ such that $0<t_2<\delta$ implies $|\widetilde P_{t_2}(g\circ\gamma)(y_1,x_2)-g\circ\gamma(y_1,x_2)|<\epsilon$ for almost every $y_1\in\R$ such that $|x_1-y_1|\leq1$ (recall we have fixed $x_1$ and $x_2$).  The selection of $\delta$ does not depend on $y_1$ as long as it is within the compact set defined by $|x_1-y_1|\leq1$.  Now we take $0<t_1,t_2<\min(\delta,\epsilon)/(1+||g\circ\gamma(\cdot,x_2)||_{L^p(\R)})$, which is possible since $x\in\R^2$ was selected so that $||g\circ\gamma(\cdot,x_2)||_{L^p(\R)}$ is finite.  Then
\begin{align*}
&\int_{\R}p_{t_1}(\gamma_1(x_1)-\gamma_1(y_1))|\widetilde P_{t_2}(g\circ\gamma)(y_1,x_2)-g\circ\gamma(y_1,x_2)|dy_1\\
&\hspace{.25cm}\less\epsilon \int_{|x_1-y_1|\leq 
1}p_{t_1}(\gamma_1(x_1)-\gamma_1(y_1))dy_1\\
&\hspace{4cm}+\int_{|x_1-y_1|>1} \frac{t_1(|\widetilde P_{t_2}(g\circ\gamma)(y_1,x_2)|+|g\circ\gamma(y_1,x_2)|)}{(\gamma_1(x_1)-\gamma_1(y_1))^2+t_1^2}dy_1
\end{align*}
\begin{align*}
&\hspace{.25cm}\less\epsilon +t_1\int_{|x_1-y_1|>1} \frac{(|\widetilde P_{t_2}(g\circ\gamma)(y_1,x_2)|+|g\circ\gamma(y_1,x_2)|)}{(x_1-y_1)^2}dy_1\\
&\hspace{.25cm}\less\epsilon +t_1\(||\widetilde P_{t_2}(g\circ\gamma)(\cdot,x_2)||_{L^p(\R)}+||g\circ\gamma(\cdot,x_2)||_{L^p(\R)}\)\(\int_{|x_1-y_1|>1} \frac{dy_1}{(x_1-y_1)^{2p'}}\)^\frac{1}{p'}\\
&\hspace{.25cm}\less\epsilon +t_1||g\circ\gamma(\cdot,x_2)||_{L^p(\R)}\less\epsilon.
\end{align*}
It follows that the first term of \eqref{twoterms} tends to zero as $t_1,t_2\rightarrow0^+$ for almost every $x\in\R^2$.  The second term in 
\eqref{twoterms} also tends to zero as $t_1,t_2\rightarrow0^+$ since $x$ was chosen so that $\widetilde P_{t_1}f(x)\rightarrow f(x)$ as $t_1\rightarrow0^+$.  Again using \eqref{equivnorm}, it easily follow that $P_tg\rightarrow g$ as $t_1,t_2\rightarrow0^+$ pointwise almost everywhere on $\Gamma$.
\end{proof}

Now we prove Theorem \ref{t:ext} assuming Theorem \ref{t:Cgammabounds}; we will prove Theorem \ref{t:Cgammabounds} in Section 5.

\begin{proof}
Let $1<p<\infty$, $g\in L^p(\Gamma)$, and define $G$ as in \eqref{Gdefn}.  Note that $p_{-t_j}(z_j-\xi_j)=-p_{t_j}(z_j-\xi_j)$ and $q_{-t_j}(z_j-\xi_j)=q_{t_j}(z_j-\xi_j)$ for $t_j\neq0$, $z_j\in\Gamma_j$, and $j=1,2$.  Then it follows that for $(z_1,z_2)\in\Gamma$ and $t_1,t_2>0$, we have
\begin{align*}
&G(z_1+it_1,z_2+it_2)=\frac{1}{4}\(P_tg(z)-\mathcal C_tg(z)+i\mathcal C_t^{p1}g(z)+i\mathcal C_t^{p2}g(z)\),\\
&G(z_1+it_1,z_2-it_2)=\frac{1}{4}\(-P_tg(z)-\mathcal C_tg(z)-i\mathcal C_t^{p1}g(z)+i\mathcal C_t^{p2}g(z)\),\\
&G(z_1-it_1,z_2+it_2)=\frac{1}{4}\(-P_tg(z)-\mathcal C_tg(z)+i\mathcal C_t^{p1}g(z)-i\mathcal C_t^{p2}g(z)\),\\
&G(z_1-it_1,z_2-it_2)=\frac{1}{4}\(P_tg(z)-\mathcal C_tg(z)-i\mathcal C_t^{p1}g(z)-i\mathcal C_t^{p2}g(z)\).
\end{align*}
By Theorem \ref{t:Cgammabounds}, it follows that $\mathcal C_\Gamma g,\mathcal C_\Gamma^{p1} g,\mathcal C_\Gamma^{p2} g\in L^p(\Gamma)$ and $\mathcal C_t g\rightarrow\mathcal C_\Gamma g$, $\mathcal C_t^{p1} g\rightarrow\mathcal C_\Gamma^{p1} g$, and $\mathcal C_t^{p2} g\rightarrow\mathcal C_\Gamma^{p2} g$ as $t_1,t_2\rightarrow0^+$ in $L^p(\Gamma)$ and pointwise almost everywhere on $\Gamma$.  Then for $z=(z_1,z_2)\in\Gamma$
\begin{align*}
&g_{++}(z)=\frac{1}{4}\(g(z)-\mathcal C_\Gamma g(z)+i\mathcal C_\Gamma^{p1}g(z)+i\mathcal C_\Gamma^{p2}g(z)\),\\
&g_{+-}(z)=\frac{1}{4}\(-g(z)-\mathcal C_\Gamma g(z)-i\mathcal C_\Gamma ^{p1}g(z)+i\mathcal C_\Gamma ^{p2}g(z)\),\\
&g_{-+}(z)=\frac{1}{4}\(-g(z)-\mathcal C_\Gamma g(z)+i\mathcal C_\Gamma ^{p1}g(z)-i\mathcal C_\Gamma ^{p2}g(z)\),\text{ and}\\
&g_{--}(z)=\frac{1}{4}\(g(z)-\mathcal C_\Gamma g(z)-i\mathcal C_\Gamma ^{p1}g(z)-i\mathcal C_\Gamma ^{p2}g(z)\).
\end{align*}
Then it also follows that \eqref{holomorphiclimit} holds, i. e. $g=g_{++}-g_{+-}-g_{-+}+g_{--}$, as $L^p(\Gamma)$ functions and almost everywhere in $\Gamma$.  It is also not hard to verify that $G(\omega_1,\omega_2)$ is holomorphic for $(\omega_1,\omega_2)\in(\C\backslash\Gamma_1)\times(\C\backslash\Gamma_2)$: for $\zeta=(\zeta_1,\zeta_2)\in(\C\backslash\Gamma_1)\times(\C\backslash\Gamma_2)$, we have the following power series representation
\begin{align*}
G(\omega_1,\omega_2)=\frac{1}{(2\pi i)^2}\sum_{k_1,k_2=0}^\infty\(\int_{\Gamma}\frac{g(\xi)d\xi}{(\xi_1-\zeta_1)^{k_1+1}(\xi_2-\zeta_2)^{k_2+1}}\)(\omega_1-\zeta_1)^{k_1}(\omega_2-\zeta_2)^{k_2},
\end{align*}
when $|\omega_1-\zeta_1|<\dist(\zeta_1,\Gamma_1)/2$ and $|\omega_2-\zeta_2|<\dist(\zeta_2,\Gamma_2)/2$.  Therefore $G$ is a holomorphic extension of $g$.
\end{proof}

\section{Littlewood-Paley Square Function Theory}

In this section, we develop some biparameter Littlewood-Paley-Stein theory.  We work in arbitrary dimension $\R^n$, where $n=n_1+n_2$.  We start by fixing some notation and defining biparameter Littlewood-Paley-Stein operators and square function.  For $k_j\in\Z$, $N_j>0$, and $x_j\in\R$
\begin{align*}
\Phi_{k_j}^{N_j}(x_j)=\frac{2^{n_jk_j}}{(1+2^{k_j}|x_j|)^{N_j}}
\end{align*}
for $j=1,2$.  Again we will use the subscripts of $k_j$, $N_j$, and $x_j$ to distinguish between functions on $\R^{n_1}$ and $\R^{n_2}$.  A collection of functions 
$\theta_{\vec k}:\R^{2n}\rightarrow\C$ for $\vec 
k\in\Z^2$ is a collection of biparameter 
Littlewood-Paley-Stein kernels if for all 
$x_1,y_1,x_1',y_1'\in\R^{n_1}$ and 
$x_2,y_2,x_2',y_2'\in\R^{n_2}$
\begin{align}
&|\theta_{\vec k}(x,y)|\less\Phi_{k_1}^{N_1+\gamma}(x_1-y_1)\Phi_{k_2}^{N_2+\gamma}(x_2-y_2)\label{size}\\
&|\theta_{\vec k}(x,y)-\theta_{\vec k}(x_1',x_2,y)|\less (2^{k_1}|x_1-x_1'|)^\gamma\notag\\
&\hspace{4cm}\times\(\Phi_{k_1}^{N_1+\gamma}(x_1-y_1)+\Phi_{k_1}^{N_1+\gamma}(x_1'-y_1)\)\Phi_{k_2}^{N_2}(x_2-y_2)\label{regx1}\\
&|\theta_{\vec k}(x,y)-\theta_{\vec k}(x_1,x_2',y)|\less (2^{k_2}|x_2-x_2'|)^{\gamma}\notag\\
&\hspace{4cm}\times\Phi_{k_1}^{N_1}(x_1-y_1)\(\Phi_{k_2}^{N_2+\gamma}(x_2-y_2)+\Phi_{k_2}^{N_2+\gamma}(x_2'-y_2)\)\label{regx2}\\
&|\theta_{\vec k}(x,y)-\theta_{\vec k}(x,y_1',y_2)|\less (2^{k_1}|y_1-y_1'|)^{\gamma}\notag\\
&\hspace{4cm}\times\(\Phi_{k_1}^{N_1+\gamma}(x_1-y_1)+\Phi_{k_1}^{N_1+\gamma}(x_1-y_1')\)\Phi_{k_2}^{N_2}(x_2-y_2)\label{regy1}
\end{align}
\begin{align}
&|\theta_{\vec k}(x,y)-\theta_{\vec k}(x,y_1,y_2')|\less (2^{k_2}|y_2-y_2'|)^{\gamma}\notag\\
&\hspace{4cm}\times\Phi_{k_1}^{N_1}(x_1-y_1)\(\Phi_{k_2}^{N_2+\gamma}(x_2-y_2)+\Phi_{k_2}^{N_2+\gamma}(x_2-y_2')\)\label{regy2}
\end{align}
for some $N_1>n_1$, $N_2>n_2$, and $0<\gamma\leq1$.  We say that a collection of operators $\Theta_{\vec k}$ for $\vec k\in\Z^2$ is a collection of biparameter Littlewood-Paley-Stein operators if 
\begin{align}
&\Theta_{\vec k}f(x)=\int_{\R^n}\theta_{\vec 
k}(x,y)f(y)dy.\label{theta}
\end{align}
for some collection of biparameter Littlewood-Paley-Stein kernels $\theta_{\vec k}$ satisfying \eqref{size}-\eqref{regy2}.

\begin{remark}\label{r:kernelequiv}
Properties \eqref{size}-\eqref{regy2} hold if and 
only if $\theta_{\vec k}$ satisfies the alternate 
condition set:
\begin{align*}
&|\theta_{\vec k}(x,y)|\less\Phi_{k_1}^{N_1'}(x_1-y_1)\Phi_{k_2}^{N_2'}(x_2-y_2),\\
&|\theta_{\vec k}(x,y)-\theta_{\vec k}(x_1',x_2,y)|\less 2^{n_1k_1}\,2^{n_2k_2}(2^{k_1}|x_1-x_1'|)^{\gamma'},\\
&|\theta_{\vec k}(x,y)-\theta_{\vec k}(x_1,x_2',y)|\less 2^{n_1k_1}\,2^{n_2k_2}(2^{k_2}|x_2-x_2'|)^{\gamma'},\\
&|\theta_{\vec k}(x,y)-\theta_{\vec k}(x,y_1',y_2)|\less 2^{n_1k_1}\,2^{n_2k_2}(2^{k_1}|y_1-y_1'|)^{\gamma'},\\
&|\theta_{\vec k}(x,y)-\theta_{\vec k}(x,y_1,y_2')|\less 2^{n_1k_1}\,2^{n_2k_2}(2^{k_2}|y_2-y_2'|)^{\gamma'}
\end{align*}
for some $N_1'>n_1$, $N_2'>n_2$, and $0<\gamma'\leq1$.
\end{remark}

\begin{proof}
It is obvious that \eqref{size}-\eqref{regy2} imply the above condition set since $\Phi_{k_j}^{N_j}(x_j)\leq2^{k_jn_j}$.  Assume there exist $N_1'>n_1$, $N_2'>n_2$, and $0<\gamma'\leq1$ such that the alternate condition set holds and choose $\eta\in(0,1)$ small enough so that $N_1=(1-\eta)N_1'-\eta\gamma'>n_1$ and $N_2=(1-\eta)N_2'-\eta\gamma'>n_2$, which is possible since $N_1'>n_1$ and $N_2'>n_2$.  Also define $\gamma=\eta\gamma'$, and it follows that
\begin{align*}
|\theta_{\vec k}(x,y)-\theta_{\vec k}(x_1',x_2,y)|&\less \(2^{k_1n_1}\,2^{k_2n_2}(2^{k_1}|x_1-x_1'|)^{\gamma'}\)^\eta\\
&\hspace{1.25cm}\times\(\Phi_{k_1}^{N_1'}(x_1-y_1)+\Phi_{k_1}^{N_1'}(x_1'-y_1)\)^{1-\eta}\Phi_{k_2}^{N_2'}(x_2-y_2)^{1-\eta}\\
&\hspace{-.65cm}\less(2^{k_1}|x_1-x_1'|)^{\gamma}\(\Phi_{k_1}^{N_1+\gamma}(x_1-y_1)+\Phi_{k_1}^{N_1+\gamma}(x_1'-y_1)\)\Phi_{k_2}^{N_2+\gamma}(x_2-y_2).
\end{align*}
The other conditions follow by symmetry, and hence the condition sets are equivalent.
\end{proof}

\ER We use the definition of para-accretive given by Han in \cite{Ha}. 

\begin{definition}
A function $b\in L^\infty(\R^n)$ is para-accretive 
if $b^{-1}\in L^\infty(\R^n)$ and
there exists a $c_0>0$ such that for all cubes 
$Q\subset\R^n$ there exists a cube $R\subset Q$ 
such that
\begin{align*}
\frac{1}{|Q|}\left|\int_Rb(x)dx\right|\geq c_0.
\end{align*}
\end{definition}

Let $\varphi\in C_0^\infty(\R^n)$ be non-negative with integral $1$ and $\supp(\varphi)\subset B(0,1/8)$.  Define for $x\in\R^n$, $k\in\Z$, and $f:\R^n\rightarrow\C$, $P_kf(x)=\varphi_k*f(x)$ where $\varphi_k(x)=2^{kn}\varphi(2^kx)$ and
\begin{align}
&S_k^bf(x)=P_kM_{(P_kb)^{-1}}P_kf(x)& &\text{and}& &D_k^bf(x)=S_{k+1}^bf(x)-S_k^bf(x).&\label{accapptoID}
\end{align}
Here $M_b$ is the pointwise multiplication operator defined by $M_bf(x)=b(x)f(x)$.  These operators were introduced by David-Journ\'e-Semmes in 
\cite{DJS}, and in that work it was proved that $|P_kb(x)|\geq Cc_0$ where the constant $C>0$ depends only on the dimension $n$.  It also follows that
\begin{align}
&\lim_{k\rightarrow\infty}S_{k}^bM_bf=f& &\text{and}& &\lim_{k\rightarrow\infty}S_{-k}^bM_bf=0&\label{reproducingformula}
\end{align}
in $L^p(\R^{n})$ for all $f\in L^p(\R^n)\cap L^q(\R^n)$ when $1<q< p<\infty$.  We also have the following properties for $S_k^b$ and $D_k^b$ and their kernels $s_k^b$ and $d_k^b$, see \cite{DJS} or \cite{Ha} for details:
\begin{align*}
&s_k^b(x,y)=d_k^b(x,y)=0\hspace{.5cm}\text{for }2^k|x-y|>1,\\
&|s_{k}^b(x,y)|+|d_k^b(x,y)|\less 2^{kn},\\
&|s_{k}^b(x,y)-s_{k}^b(x',y)|+|d_k^b(x,y)-d_k^b(x',y)|\less 2^{kn}(2^k|x-x'|)^\gamma,\\
&|s_{k}^b(x,y)-s_{k}^b(x,y')|+|d_k^b(x,y)-d_k^b(x,y')|\less 2^{kn}(2^k|y-y'|)^\gamma.
\end{align*}
Also let $\mathcal M_S$ be the biparameter strong maximal function
\begin{align*}
\mathcal M_Sf(x)=\sup_{Q_i\ni x_i}\frac{1}{|Q_1|\,|Q_2|}\int_{Q_1\times Q_2}|f(y_1,y_2)|dy_1\,dy_2
\end{align*}
where the supremum is taken over cubes $Q_1\subset\R^{n_1}$ and $Q_2\subset\R^{n_2}$.  It follows by standard arguments that for all $f\in L^1(\R^{n})+L^\infty(\R^n)$
\begin{align}
\sup_{k_1,k_2\in\Z}(\Phi_{k_1}^{N_1}\otimes\Phi_{k_2}^{N_2})*|f|(x)\less \mathcal M_Sf(x)
\end{align}
for any $N_1>n_1$ and $N_2>n_2$.  We now prove an almost orthogonality lemma.

\begin{lemma}\label{l:almostorth}
Assume that $\Theta_{\vec k}$ and $\Psi_{\vec k}$ are operators defined by \eqref{theta} with kernels respectively $\theta_{\vec k}$ and $\psi_{\vec k}$.  Also assume that $\theta_{\vec k}$ satisfies \eqref{size}, \eqref{regy1}, and \eqref{regy2} and that $\psi_{\vec k}$ satisfies \eqref{size}, \eqref{regx1}, and \eqref{regx2}. If there exist para-accretive functions $b_1\in L^\infty(\R^{n_1})$ and $b_2\in L^\infty(\R^{n_2})$ such that
\begin{align*}
&\int_{\R^{n_j}}\theta_{\vec k}(x,y)b_j(y_j)dy_j=\int_{\R^{n_j}}\psi_{\vec k}(x,y)b_j(x_j)dx_j=0
\end{align*}
for $j=1,2$ all $x\in\R^n$ and $k_1,k_2\in\Z$, then for all $\vec k=(k_1,k_2),\vec j=(j_1,j_2)\in\Z^2$
\begin{align*}
|\Theta_{\vec k}M_b\Psi_{\vec j}f(x)|\less 2^{-\epsilon|j_1-k_1|}2^{-\epsilon|j_2-k_2|} \mathcal M_Sf(x)
\end{align*}
for some $\epsilon>0$, where $b(x)=b_1(x_1)b_2(x_2)$ for $x=(x_1,x_2)\in\R^n$.
\end{lemma}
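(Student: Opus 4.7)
My plan is to bound the kernel of $\Theta_{\vec k} M_b \Psi_{\vec j}$ pointwise by a product of two $\Phi$-type factors in the $(x_1-w_1, x_2-w_2)$ variables carrying exponential decay in $|k_\ell - j_\ell|$, then conclude using the strong maximal function control.  This is a biparameter version of the one-parameter almost orthogonality argument in \cite{DJS}.  Split into four cases according to the signs of $k_1 - j_1$ and $k_2 - j_2$.  In each parameter $\ell$ separately, the operator at the finer scale supplies the cancellation against $b_\ell$ while the operator at the coarser scale supplies the H\"older regularity in the variable just used for cancellation.

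For definiteness, consider the mixed case $k_1 \leq j_1$, $k_2 > j_2$.  Using that $\int \psi_{\vec k}(y,w) b_1(y_1) \, dy_1 = 0$ and $\int \theta_{\vec k}(x,y) b_2(y_2) \, dy_2 = 0$ as iterated identities (both subtracted integrands are independent of the variable of cancellation, so Fubini is harmless), the kernel of $\Theta_{\vec k} M_b \Psi_{\vec j}$ becomes
\begin{align*}
K(x,w) = \int [\theta_{\vec k}(x,y) - \theta_{\vec k}(x,(w_1,y_2))]\,[\psi_{\vec j}(y,w) - \psi_{\vec j}((y_1,x_2),w)]\, b_1(y_1) b_2(y_2)\,dy.
\end{align*}
Then \eqref{regy1} bounds the first bracket by $(2^{k_1}|y_1-w_1|)^\gamma$ times a sum of $\Phi_{k_1}^{N_1+\gamma}$ terms in $x_1$ times $\Phi_{k_2}^{N_2}(x_2-y_2)$, while \eqref{regx2} bounds the second by $(2^{j_2}|y_2-x_2|)^\gamma$ times $\Phi_{j_1}^{N_1}(y_1-w_1)$ times a sum of $\Phi_{j_2}^{N_2+\gamma}$ terms in $w_2$.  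The three remaining cases are handled by entirely analogous pairs of subtractions: in each parameter $\ell$ one uses $\theta$'s cancellation and $\psi$'s $x_\ell$-smoothness when $k_\ell > j_\ell$, and $\psi$'s cancellation and $\theta$'s $y_\ell$-smoothness when $k_\ell \leq j_\ell$.

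The crucial point is now a scale comparison.  In each parameter the finer $\Phi$-factor in the integrand localizes the corresponding difference to the coarser scale, so the H\"older prefactors satisfy
\begin{align*}
(2^{k_\ell}|y_\ell - w_\ell|)^\gamma \Phi_{j_\ell}^{N_\ell}(y_\ell-w_\ell) \less 2^{-\gamma(j_\ell-k_\ell)} \Phi_{j_\ell}^{N_\ell-\gamma}(y_\ell-w_\ell) \quad (k_\ell \leq j_\ell),
\end{align*}
and symmetrically in the opposite case.  Carrying out the $y$-integration using the standard $\Phi$-convolution estimate $\Phi_{k_\ell}^{N_\ell} * \Phi_{j_\ell}^{N_\ell} \less \Phi_{\min(k_\ell,j_\ell)}^{N_\ell}$, one obtains
\begin{align*}
|K(x,w)| \less 2^{-\gamma|k_1-j_1|}\, 2^{-\gamma|k_2-j_2|}\, \Phi_{k_1 \wedge j_1}^{N_1}(x_1-w_1)\,\Phi_{k_2 \wedge j_2}^{N_2}(x_2-w_2).
\end{align*}
Applying this against $|f|$ and invoking $\sup_{\vec m}(\Phi_{m_1}^{N_1} \otimes \Phi_{m_2}^{N_2}) * |f|(x) \less \mathcal M_S f(x)$ yields the claim with $\epsilon = \gamma$, after absorbing $\|b_1\|_{L^\infty}\|b_2\|_{L^\infty}$ into the implicit constant.

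The main obstacle is really a bookkeeping one: both subtractions must be carried out in sequence, which forces each subtraction point to be independent of the variable used in the other cancellation --- note for example that $(w_1, y_2)$ is chosen so that the $\theta$-difference still has $\int \cdot\, b_2(y_2)\,dy_2 = 0$, and $(y_1, x_2)$ is chosen so that the $\psi$-difference still has $\int \cdot\, b_1(y_1)\,dy_1 = 0$ in the earlier step of the double reduction.  A secondary technical issue is the loss of $\gamma$ in the decay exponent of certain $\Phi$-factors from \eqref{regx1}--\eqref{regy2}; since $N_\ell > n_\ell$ strictly and Remark \ref{r:kernelequiv} allows exponents to be freely redistributed, this loss is easily absorbed while keeping the convolution estimates integrable.
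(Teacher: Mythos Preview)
Your argument is correct and clean in the mixed case you displayed, but the claim that the three remaining cases are ``entirely analogous'' breaks down in the two diagonal cases $k_1\le j_1,\ k_2\le j_2$ and $k_1>j_1,\ k_2>j_2$.  In those cases your prescription puts \emph{both} subtractions on the same kernel (both on $\theta_{\vec k}$, respectively both on $\psi_{\vec j}$), producing a genuine second difference such as
\[
\theta_{\vec k}(x,y)-\theta_{\vec k}(x,(w_1,y_2))-\theta_{\vec k}(x,(y_1,w_2))+\theta_{\vec k}(x,(w_1,w_2)).
\]
To extract the product factor $(2^{k_1}|y_1-w_1|)^{\gamma}(2^{k_2}|y_2-w_2|)^{\gamma}$ from this you would need a \emph{mixed} regularity estimate in $(y_1,y_2)$, but the hypotheses \eqref{regy1}--\eqref{regy2} only control first differences in each variable separately.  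From those you only get, say,
\[
|\Delta\theta|\less(2^{k_1}|y_1-w_1|)^{\gamma}\bigl(\Phi_{k_1}^{N_1+\gamma}(x_1-y_1)+\Phi_{k_1}^{N_1+\gamma}(x_1-w_1)\bigr)\bigl(\Phi_{k_2}^{N_2}(x_2-y_2)+\Phi_{k_2}^{N_2}(x_2-w_2)\bigr),
\]
with no gain at all in parameter $2$.  So the asserted $\epsilon=\gamma$ does not follow in those two cases.

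The paper circumvents this by \emph{never} performing a double subtraction: it proves the single-parameter kernel bound
\[
\left|\int\theta_{\vec k}(x,u)b(u)\psi_{\vec j}(u,y)\,du\right|\less 2^{-\gamma|k_1-j_1|}\bigl(\Phi_{k_1}^{N_1}+\Phi_{j_1}^{N_1}\bigr)(x_1-y_1)\bigl(\Phi_{k_2}^{N_2}+\Phi_{j_2}^{N_2}\bigr)(x_2-y_2)
\]
(one subtraction in parameter $1$ only, choosing which of $\theta$ or $\psi$ to hit according to the sign of $k_1-j_1$), the symmetric bound with $2^{-\gamma|k_2-j_2|}$, and then takes the geometric mean of the two resulting estimates against $\mathcal M_Sf$ to arrive at $\epsilon=\gamma/2$.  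That trick is exactly what lets one avoid any mixed-regularity hypothesis.  Your mixed-case computation is actually sharper ($\epsilon=\gamma$ there), but you still need the paper's geometric-mean device---or an additional mixed-smoothness hypothesis on $\theta_{\vec k}$ and $\psi_{\vec j}$---to close the diagonal cases.
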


\begin{proof}
Using the cancellation of $\psi_{\vec j}$ and conditions \eqref{size} and \eqref{regy1}, it follows that
\begin{align*}
&\left|\int_{\R^n}\theta_{\vec k}(x,u)b(u)\psi_{\vec j}(u,y)du\right|\less\int_{\R^n}|\theta_{\vec k}(x,u)-\theta_{\vec k}(x,y_1,u_2)|\;|\psi_{\vec j}(u,y)|du\\
&\hspace{.5cm}\less\int_{\R^n}(2^{k_1}|u_1-y_1|)^{\gamma}\(\Phi_{k_1}^{N_1+\gamma}(x_1-u_1)+\Phi_{k_1}^{N_1+\gamma}(x_1-y_1)\)\Phi_{k_2}^{N_2+\gamma}(x_2-u_2)\Phi_{j_1}^{N_1+\gamma}(u_1-y_1)\Phi_{j_2}^{N_2+\gamma}(u_2-y_2)du\\
&\hspace{.5cm}=2^{\gamma(k_1- j_1)}\int_{\R^n}(2^{j_1}|u_1-y_1|)^{\gamma}\Phi_{j_1}^{N_1+\gamma}(u_1-y_1)\(\Phi_{k_1}^{N_1+\gamma}(x_1-u_1)+\Phi_{k_1}^{N_1+\gamma}(x_1-y_1)\)\\
&\hspace{4.5cm}\times\Phi_{k_2}^{N_2+\gamma}(x_2-u_2)\Phi_{j_2}^{N_2+\gamma}(u_2-y_2)du\\
&\hspace{.5cm}\leq2^{\gamma(k_1- j_1)}\int_{\R^n}\Phi_{j_1}^{N_1}(u_1-y_1)\(\Phi_{k_1}^{N_1+\gamma}(x_1-u_1)+\Phi_{k_1}^{N_1+\gamma}(x_1-y_1)\)du_1\int_{\R^n}\Phi_{k_2}^{N_2+\gamma}(x_2-u_2)\Phi_{j_2}^{N_2+\gamma}(u_2-y_2)du_2\\
&\hspace{.5cm}\less2^{\gamma(k_1- j_1)}\(\Phi_{k_1}^{N_1}(x_1-y_1)+\Phi_{j_1}^{N_1}(x_1-y_1)\)\(\Phi_{k_2}^{N_2}(x_2-y_2)+\Phi_{j_2}^{N_2}(x_2-y_2)\).
\end{align*}
By similar computations using the cancellation of $\theta_{\vec k}$, we have
\begin{align*}
&\left|\int_{\R^n}\theta_{\vec k}(x,u)b(u)\psi_{\vec j}(u,y)du\right|\\
&\hspace{1cm}\less2^{-\gamma(j_1-k_1)}\(\Phi_{k_1}^{N_1}(x_1-y_1)+\Phi_{j_1}^{N_1}(x_1-y_1)\)\(\Phi_{k_2}^{N_2}(x_2-y_2)+\Phi_{j_2}^{N_2}(x_2-y_2)\).
\end{align*}
Then it follows that
\begin{align*}
|\Theta_{\vec k}M_b\Psi_{\vec j}f(x)|\less2^{-\gamma|j_1-k_1|}\mathcal M_Sf(x).
\end{align*}
Our assumptions are symmetric in $k_1,j_1$ and $k_2,j_2$, so it follows that 
\begin{align*}
|\Theta_{\vec k}M_b\Psi_{\vec j}f(x)|\less2^{-\gamma|j_2-k_2|}\mathcal M_Sf(x).
\end{align*}
Then taking the geometric mean of these two estimates, we have
\begin{align*}
|\Theta_{\vec k}M_b\Psi_{\vec j}f(x)|&\less2^{-\gamma|j_1-k_1|/2}2^{-\gamma|j_2-k_2|/2}\mathcal M_Sf(x).
\end{align*}
This completes the proof.
\end{proof}
Given a para-accretive function $b$, let $S_k^b$ and $D_k^b=S_{k+1}^b-S_k^b$ be the operators from \eqref{accapptoID}.  Theorem 2.3 in \cite{Ha} says that there exist operators $\widetilde D_k^b$ for $k\in\Z$ such that
\begin{align}
\sum_{k\in\Z}\widetilde D_k^bM_bD_k^bM_bf=f\label{Hanformula}
\end{align}
in $L^p(\R^n)$ for any function $f:\R^n\rightarrow\C$ such that $|f(x)|\less\Phi_0^N(x)$ for some $N>n$, $|f(x)-f(y)|\less|x-y|^\gamma$ for some $\gamma>0$, and $bf$ has mean zero.  Furthermore, $\widetilde D_k^d$ is given by integration against its kernel $\widetilde d_k^b:\R^{2n}\rightarrow\C$,
\begin{align*}
\widetilde D_k^bf(x)=\int_{\R^n}\widetilde d_k^b(x,y)f(y)dy,
\end{align*}
and $\widetilde d_k^b$ satisfies
\begin{align*}
&|\widetilde d_k^b(x,y)|\less\Phi_k^{N+\gamma}(x-y),\\
&|\widetilde d_k^b(x,y)-\widetilde d_k^b(x',y)|\less(2^k|x-x'|)^\gamma\(\Phi_k^{N+\gamma}(x-y)+\Phi_k^{N+\gamma}(x'-y)\),\\
&\int_{\R^n}\widetilde d_k^b(x,y)b(y)dy=\int_{\R^n}\widetilde d_k^b(x,y)b(x)dx=0
\end{align*}
for some $N>n$ and $0<\gamma\leq1$.

\begin{lemma}\label{l:vvbound}
Let $b_1\in L^\infty(\R^{n_1})$ and $b_2\in L^\infty(\R^{n_2})$ be para-accretive functions and $D_{k_1}^{b_1}$ and $D_{k_2}^{b_2}$ be the operators defined above.  Also define $D_{\vec k}=D_{k_1}^{b_1} D_{k_2}^{b_2}$ for $\vec k\in\Z^2$.  Then 
\begin{align*}
\left|\left|\(\sum_{\vec k\in\Z^2}|D_{\vec k}f|^2\)^\frac{1}{2}\right|\right|_{L^p(\R^n)}\less||f||_{L^p(\R^n)}
\end{align*}
for $1<p<\infty$ and $f\in L^p(\R^n)$.  
\end{lemma}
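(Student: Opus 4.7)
The plan is to reduce the biparameter square function bound to two successive applications of Han's one-parameter square function theorem from \cite{Ha}, exploiting the tensor-product structure $D_{\vec k}=D_{k_1}^{b_1}D_{k_2}^{b_2}$ and Fubini's theorem. The one-parameter result in \cite{Ha} provides, for each para-accretive $b_j\in L^\infty(\R^{n_j})$ and $1<p<\infty$, the scalar square function estimate
\begin{align*}
\left|\left|\(\sum_{k\in\Z}|D_k^{b_j}h|^2\)^{1/2}\right|\right|_{L^p(\R^{n_j})}\less ||h||_{L^p(\R^{n_j})}.
\end{align*}
This is essentially the only input I would need from the one-parameter theory.

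The key intermediate step is to promote this scalar estimate to a vector-valued, $\ell^2(\Z)$-valued, estimate: for any sequence $(h_\ell)_\ell$ of scalar functions on $\R^n$,
\begin{align*}
\left|\left|\(\sum_{k,\ell}|D_k^{b_1}h_\ell|^2\)^{1/2}\right|\right|_{L^p(\R^n)}\less \left|\left|\(\sum_\ell|h_\ell|^2\)^{1/2}\right|\right|_{L^p(\R^n)}.
\end{align*}
Since $D_k^{b_1}$ acts only on the $x_1$ variable, fixing $x_2$ reduces this to a one-parameter statement on $\R^{n_1}$, and Fubini in $x_2$ recovers the bound on $\R^n$. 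The pointwise kernel estimates on $d_k^{b_1}$ listed after \eqref{accapptoID} yield, upon squaring and summing in $k$, the expected $|x_1-y_1|^{-n_1}$ size and H\"older-type regularity of the aggregated $\ell^2(\Z)$-valued kernel $(d_k^{b_1}(x_1,y_1))_k$; combined with the scalar $L^p$ bound, standard $\ell^2$-valued Calder\'on-Zygmund theory gives the displayed vector-valued inequality.

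With this extension in hand, the lemma follows by iteration. Applying the vector-valued estimate with $h_{k_2}=D_{k_2}^{b_2}f$ yields
\begin{align*}
\left|\left|\(\sum_{\vec k\in\Z^2}|D_{\vec k}f|^2\)^{1/2}\right|\right|_{L^p(\R^n)} \less \left|\left|\(\sum_{k_2}|D_{k_2}^{b_2}f|^2\)^{1/2}\right|\right|_{L^p(\R^n)},
\end{align*}
and a second application of Han's scalar bound, now in the $x_2$ variable (with $x_1$ treated as a parameter and Fubini in $x_1$), controls the right-hand side by $||f||_{L^p(\R^n)}$, finishing the proof.

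The principal technical obstacle is the vector-valued extension of Han's theorem: while the pointwise kernel bounds on $d_k^{b_1}$ are exactly the ones needed, transferring them to $\ell^2(\Z)$-valued Calder\'on-Zygmund estimates in the para-accretive setup requires some bookkeeping. This iteration-based approach sidesteps the circular pitfalls that arise when one attempts to prove the estimate directly via Lemma \ref{l:almostorth} and the biparameter version of the reproducing formula \eqref{Hanformula}, since Han's one-parameter theorem is used as a black box from \cite{Ha}.
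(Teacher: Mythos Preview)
Your proposal is correct and follows essentially the same approach as the paper: both iterate the one-parameter square function bound (the paper cites \cite{DJS}, you cite \cite{Ha}) by first upgrading it to an $\ell^2(\Z)$-valued estimate via vector-valued Calder\'on-Zygmund theory using the kernel bounds on $d_k^{b_1}$, then apply this with $h_{k_2}=D_{k_2}^{b_2}f$ and finish with the scalar one-parameter bound in $x_2$ together with Fubini. The paper makes the functional-analytic setup slightly more explicit (viewing $\{D_{k_1}^{b_1}\}$ as an operator $L^p(\R^{n_1},\ell^2(\Z))\to L^p(\R^{n_1},\ell^2(\Z^2))$ and citing \cite{CM1,BCP,RdFRT} for the vector-valued kernel and extension), but the architecture is identical.
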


This proof is essentially the same as the one due to R. Fefferman and Stein in Theorem 2 of \cite{rFS1}.  We reproduce the argument to demonstrate that there are no problems that arise by introducing para-accretive perturbations.

\begin{proof}
We start by viewing the operator $\{D_{k_1}^{b_1}\}$ defined initially from $L^2(\R^{n_1},\ell^2(\Z))$ into $L^2(\R^{n_1},\ell^2(\Z^2))$ in the following way:  for $\{F_{k_2}\}\in L^2(\R^{n_1},\ell^2(\Z))$, define
\begin{align*}
\{D_{k_1}^{b_1}\}(\{F_{k_2}\})(x_1)=\{D_{k_1}^{b_1}F_{k_2}(x_1)\}_{k_1,k_2\in\Z};\hspace{.25cm} \text{ for }x_1\in\R^{n_1}.
\end{align*}
Let $\{F_{k_2}\}\in L^2(\R^{n_1},\ell^2(\Z))$. For each $k_2\in\Z$, we use the square function bound for $D_{k_1}^{b_1}$ from \cite{DJS}, and it follows that
\begin{align*}
\int_{\R^{n_1}}\sum_{k_1\in\Z}|D_{k_1}^{b_1}F_{k_2}(x_1)|^2dx_1\less\int_{\R^{n_1}}|F_{k_2}(x_1)|^2dx_1.
\end{align*}
Then it follows that
\begin{align*}
||\{D_{k_1}^{b_1}\}(\{F_{k_2}\})||_{L^2(\R^{n_1},\ell^2(\Z^2))}^2&=\sum_{k_2\in\Z}\(\int_{\R^{n_1}}\sum_{k_1\in\Z}|D_{k_1}^{b_1}F_{k_2}(x_1)|^2dx_1\)\\
&\less\sum_{k_2\in\Z}\(\int_{\R^{n_1}}|F_{k_2}(x_1)|^2dx_1\)=||\{F_{k_2}\}||_{L^2(\R^n,\ell^2(\Z))}.
\end{align*}
That is, $\{D_{k_1}^{b_1}\}$ is bounded from $L^2(\R^{n_1},\ell^2(\Z))$ into $L^2(\R^{n_1},\ell^2(\Z^2))$.  Now the kernel of $\{D_{k_1}^{b_1}\}$ is given by $\{d_{k_1}^{b_1}(x_1,y_1)\}\in\mathcal L(\ell^2(\Z),\ell^2(\Z^2))$ for all $x_1,y_1\in\R^{n_1}$, where $\mathcal L(X,Y)$ for Banach spaces $X$ and $Y$ denotes the collection of all linear operators from $X$ into $Y$.  For fixed $x_1,y_1\in\R^{n_1}$, the kernel $\{d_{k_1}^{b_1}(x_1,y_1)\}$ is realized as a linear operator by the scalar multiplication:  $\{a_{k_2}\}\mapsto\{d_{k_1}^{b_1}(x_1,y_1)a_{k_2}\}_{(k_1,k_2)\in\Z^2}$.  Furthermore for $x_1\neq y_1$
\begin{align*}
||\{d_{k_1}^{b_1}(x_1,y_1)\}||_{\mathcal L(\ell^2(\Z),\ell^2(\Z^2))}&=\sup_{||\{a_{k_2}\}||_{\ell^2(\Z)}=1}||\{d_{k_1}^{b_1}(x_1,y_1)a_{k_2}\}||_{\ell^2(\Z^2)}\\
&=\sup_{||\{a_{k_2}\}||_{\ell^2(\Z)}=1}||\{d_{k_1}^{b_1}(x_1,y_1)\}||_{\ell^2(\Z)}||\{a_{k_2}\}||_{\ell^2(\Z)}\\
&=||\{d_{k_1}^{b_1}(x_1,y_1)\}||_{\ell^2(\Z)}\less\frac{1}{|x_1-y_1|^{n_1}}.
\end{align*}
The last inequality is a well-known vector-valued Calder\'on-Zygmund kernel result, see e.g. Coifman-Meyer \cite{CM1}.  It also follows that
\begin{align*}
&||\{d_{k_1}^{b_1}(x_1,y_1)\}-\{d_{k_1}^{b_1}(x_1',y_1)\}||_{\mathcal L(\ell^2(\Z),\ell^2(\Z^2))}\less\frac{|x_1-x_1'|^\gamma}{|x_1-y_1|^{n_1+\gamma}};\text{ for }|x_1-x_1'|<|x_1-y_1|/2,\\
&||\{d_{k_1}^{b_1}(x_1,y_1)\}-\{d_{k_1}^{b_1}(x_1,y_1')\}||_{\mathcal L(\ell^2(\Z),\ell^2(\Z^2))}\less\frac{|y_1-y_1'|^\gamma}{|x_1-y_1|^{n_1+\gamma}};\text{ for }|y_1-y_1'|<|x_1-y_1|/2.
\end{align*}
Then $\{D_{k_1}^{b_1}\}$ is bounded from $L^p(\R^{n_1},\ell^2(\Z))$ into $L^p(\R^{n_1},\ell^2(\Z^2))$ for $1<p<\infty$ by the vector-valued Calder\'on-Zygmund theory developed by Benedek-Calder\'on-Panzone in \cite{BCP} and by Rubio de Francia-Ruiz-Torrea in \cite{RdFRT}.  Alternatively, see Theorem 4.6.1 in Grafakos \cite{G} for a statement of the result applied here.  Now we fix $f\in L^p(\R^n)$ and define for $x_2\in\R^{n_2}$ and $k_2\in\Z$, 
\begin{align*}
F_{k_2}^{x_2}(x_1)=D_{k_2}^{b_2}f(x)=\int_{\R^{n_2}}d_{k_2}^{b_2}(x_2,y_2)f(x_1,y_2)dy_2.
\end{align*}
For almost every $x_2\in\R^{n_2}$, we have $\{F_{k_2}^{x_2}\}\in L^p(\R^{n_1},\ell^2(\Z))$ and hence
\begin{align}
\int_{\R^{n_1}}\(\sum_{\vec k\in\Z^2}|D_{\vec k}f(x)|^2\)^\frac{p}{2}dx_1&=\int_{\R^{n_1}}\(\sum_{\vec k\in\Z^2}|D_{k_1}^{b_1}F_{k_2}^{x_2}(x_1)|^2\)^\frac{p}{2}dx_1\notag\\
&=||\{D_{k_1}^{b_1}\}(\{F_{k_2}^{x_2}\})||_{L^p(\R^{n_1},\ell^2(\Z^2))}\notag\\
&\less||\{F_{k_2}^{x_2}\}||_{L^p(\R^{n_1},\ell^2(\Z))}=\int_{\R^{n_1}}\(\sum_{k_2\in\Z}|D_{k_2}^{b_2}f(x)|^2\)^\frac{p}{2}dx_1.\label{vvbound}
\end{align}
Now integrate both sides of \eqref{vvbound} in $x_2$, and using the square function bound for $D_{k_2}^{b_2}$, it follows that
\begin{align*}
\int_{\R^n}\(\sum_{\vec k\in\Z^2}|D_{\vec k}f(x)|^2\)^\frac{p}{2}dx&\less\int_{\R^{n_1}}\[\int_{\R^{n_2}}\(\sum_{k_2\in\Z}|D_{k_2}^{b_2}f(x)|^2\)^\frac{p}{2}dx_2\]dx_1\\
&\less\int_{\R^{n_1}}\[\int_{\R^{n_2}}|f(x)|^pdx_2\]dx_1=||f||_{L^p(\R^n)}^p.
\end{align*}
This completes the proof.
\end{proof}

We now prove Theorem \ref{t:squarefunction}, but first we specify precisely which assumptions on 
$\theta_{\vec k}$ are needed.  One need not assume that $\Theta_{\vec k}$ for $\vec k\in\Z^2$ is a 
collection of biparameter Littlewood-Paley-Stein operators as initially stated in Theorem 
\ref{t:squarefunction}.  Instead, we only need to assume that $\theta_{\vec k}$ satisfies 
\eqref{size}, \eqref{regy1}, and \eqref{regy2}.  In short, we can remove the assumption that 
$\theta_{\vec k}$ satisfies conditions \eqref{regx1} and \eqref{regx2} from Theorem \ref{t:squarefunction}.  In particular, this means that the square function associated to $\widetilde 
D_{\vec k}^*$ is bounded as well:  let $\widetilde D_{k_1}^{b_1}$ and $\widetilde D_{k_2}^{b_2}$ be 
the operators constructed in Theorem 2.3 from \cite{Ha}.  Define $\widetilde D_{\vec k}=\widetilde D_{k_1}^{b_1}\widetilde D_{k_2}^{b_2}$ for $\vec k\in\Z^2$, and it follows 
that
\begin{align*}
\left|\left|\(\sum_{\vec k\in\Z^2}|\widetilde D_{\vec k}^*f|^2\)^\frac{1}{2}\right|\right|_{L^p(\R^n)}\less||f||_{L^p}
\end{align*}
for all $f\in L^p(\R^n)$ when $1<p<\infty$.  Before we prove Theorem \ref{t:squarefunction}, we prove a lemma analogous to the result in Theorem 2.3 from \cite{Ha}.

\begin{lemma}\label{l:Econverge}
Let $b_1\in L^\infty(\R^{n_1})$ and $b_2\in L^\infty(\R^{n_2})$ be para-accretive functions and $b(x)=b_1(x_1)b_2(x_2)$ for $x=(x_1,x_2)\in\R^n$.   For $j=1,2$ let $D_{k_j}^{b_i}$ be as in \eqref{accapptoID} and $\widetilde D_{k_j}^{b_i}$ be as in \eqref{Hanformula} from Theorem 2.3 in \cite{Ha}.  Define $E_{k_j}^{b_j}=\widetilde D_{k_j}M_{b_j}D_{k_j}^{b_j}$ for $k_j\in \Z$ and $j=1,2$.  For any differentiable compactly supported function $f:\R^n\rightarrow\C$ such that
\begin{align*}
\int_{\R^{n_1}}f(x)b(x)dx_1=\int_{\R^{n_2}}f(x)b(x)dx_2=0
\end{align*}
for $x=(x_1,x_2)\in\R^n$, we have the following convergence
\begin{align*}
\lim_{T\rightarrow\infty}\left|\left|\sum_{|j_1|<T,|j_2|<N_T}E_{\vec j}M_bf-f\right|\right|_{L^p(\R^n)}=0
\end{align*}
for some sequence $N_T\geq T$.
\end{lemma}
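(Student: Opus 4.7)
The plan is to reduce the biparameter reproducing formula to iterated applications of Han's one-parameter formula \eqref{Hanformula} in each variable, combined with a diagonal choice of $N_T$. The starting observation is that since $b(x)=b_1(x_1)b_2(x_2)$ and $E_{j_1}^{b_1}$ acts only on the $x_1$-variable while $E_{j_2}^{b_2}$ acts only on the $x_2$-variable, if we set $V_T^1:=\sum_{|j_1|<T} E_{j_1}^{b_1}M_{b_1}$ and $V_N^2:=\sum_{|j_2|<N} E_{j_2}^{b_2}M_{b_2}$, then
\begin{align*}
\sum_{|j_1|<T,\,|j_2|<N} E_{\vec j}M_b f = V_T^1 V_N^2 f,
\end{align*}
where $V_T^1$ and $V_N^2$ commute since they act in independent variables.

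Next I would establish one-parameter $L^p$-convergence and uniform $L^p(\R^n)$-bounds for $V_T^1$ (and symmetrically for $V_N^2$). The hypothesis $\int_{\R^{n_1}} f(x)b(x)\,dx_1=0$ combined with $b_2^{-1}\in L^\infty$ from para-accretivity yields $\int_{\R^{n_1}} f(x_1,x_2)b_1(x_1)\,dx_1=0$ for every $x_2$. Since $f$ is differentiable and compactly supported, each slice $f(\cdot,x_2)$ satisfies the hypotheses of Theorem 2.3 in \cite{Ha}, so $V_T^1 f(\cdot,x_2)\to f(\cdot,x_2)$ in $L^p(\R^{n_1})$ for every $x_2$ as $T\to\infty$. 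A duality argument using the one-parameter square function bounds for $D_{j_1}^{b_1}$ and $\widetilde D_{j_1}^{b_1}$ from \cite{DJS, Ha} shows that $V_T^1$ is uniformly bounded on $L^p(\R^{n_1})$; Fubini promotes this to a uniform bound on $L^p(\R^n)$, and dominated convergence then yields $V_T^1 f\to f$ in $L^p(\R^n)$.

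Finally I would decompose
\begin{align*}
V_T^1 V_{N_T}^2 f - f = V_T^1(V_{N_T}^2 f - f) + (V_T^1 f - f).
\end{align*}
The second term tends to $0$ in $L^p(\R^n)$ as $T\to\infty$ by the previous step. For the first term, the uniform $L^p(\R^n)$-bound on $V_T^1$ gives $\|V_T^1(V_{N_T}^2 f - f)\|_{L^p(\R^n)}\less\|V_{N_T}^2 f - f\|_{L^p(\R^n)}$, and since $V_N^2 f\to f$ in $L^p(\R^n)$ as $N\to\infty$, we may choose $N_T\geq T$ sufficiently large so that $\|V_{N_T}^2 f - f\|_{L^p(\R^n)}\leq 1/T$. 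This yields the claimed convergence.

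The main obstacle I anticipate is extracting the uniform $L^p(\R^{n_1})$-boundedness of the partial-sum operator $V_T^1$: Han's reproducing formula \eqref{Hanformula} only asserts convergence on a restricted class of test functions, so the uniform bound must be deduced separately from the square function estimates of \cite{DJS, Ha} via a standard duality/Littlewood--Paley argument. Once this uniform bound is in hand, the slice-wise application of Han's one-parameter formula together with the diagonal choice of $N_T$ is routine.
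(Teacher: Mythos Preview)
Your plan is correct and very close to the paper's own argument: both reduce to slicewise applications of Han's one-parameter reproducing formula, pass from $L^p(\R^{n_j})$ to $L^p(\R^n)$ by dominated convergence over the frozen variable (using the compact support of $f$), and then choose $N_T$ diagonally. The one organizational difference is in the cross term. You write $V_T^1 V_{N_T}^2 f - f = V_T^1(V_{N_T}^2 f - f) + (V_T^1 f - f)$ and control the first piece via a \emph{uniform} $L^p$-bound on $V_T^1$; the paper instead writes the cross term as $V_{N_T}^2(V_T^1 f) - V_T^1 f$ and, for each fixed $T$, checks directly that $F_T^{x_1}(\cdot)=V_T^1 f(x_1,\cdot)$ is an admissible input for Han's formula in the second variable (bounded, compactly supported, H\"older, mean zero against $b_2$), with $T$-dependent constants that are harmless since $N_T$ is chosen after $T$. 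Your route is cleaner provided the uniform bound on $V_T^1$ is available---and it is, exactly via the duality/square-function argument you sketch, using the $L^p$ square function for $D_j^{b_1}$ from \cite{DJS} and for $(\widetilde D_j^{b_1})^*$ from the kernel properties listed after \eqref{Hanformula}. The paper's route avoids isolating that uniform bound for the cross term but tacitly relies on the same one-parameter boundedness for the dominated-convergence step, so the two arguments are really variants of the same proof.
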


\begin{proof}
Let $f:\R^n\rightarrow\C$ be differentiable and compactly supported such that
\begin{align*}
\int_{\R^{n_1}}f(x)b(x)dx_1=\int_{\R^{n_2}}f(x)b(x)dx_2=0.
\end{align*}
For each $x_2\in\R^{n_2}$, $f(\cdot,x_2)$ is differentiable, compactly supported, and $b_1\cdot f(\cdot,x_2)$ has mean zero.  Then by Theorem 2.3 in \cite{Ha}, for every $x_2\in\R^{n_2}$
\begin{align*}
&\lim_{T\rightarrow\infty}\left|\left|\sum_{|j_1|<T}E_{j_1}M_{b_1}f(\cdot,x_2)-f(\cdot,x_2)\right|\right|_{L^p(\R^{n_1})}=0
\end{align*}
Since $f$ is compactly supported and the above quantity is bounded uniformly in $T$, it follows by dominated convergence that
\begin{align}
&\lim_{T\rightarrow\infty}\left|\left|\sum_{|j_1|<T}E_{j_1}M_{b_1}f-f\right|\right|_{L^p(\R^n)}^p=\int_{\R^{n_2}}\lim_{T\rightarrow\infty}\left|\left|\sum_{|j_1|<T}E_{j_1}M_{b_1}f(\cdot,x_2)-f(\cdot,x_2)\right|\right|_{L^p(\R^{n_1})}^pdx_2=0.\label{limit1}
\end{align}
Define for each $T>0$
\begin{align*}
F_T^{x_1}(x_2)=\sum_{|j_1|<T}E_{j_1}M_{b_1}f(x_1,x_2).
\end{align*}
It follows that 
\begin{align*}
|F_T^{x_1}(x_2)|\leq\sum_{|j_1|<T}|E_{j_1}M_{b_1}f(x_1,x_2)|\leq2T\mathcal M_1f(x)\leq2T\sup_{x_1\in\R^{n_1}}|f(x_1,x_2)|.
\end{align*}
Therefore $F_T^{x_1}:\R^{n_2}\rightarrow\C$ is bounded (depending on $T$) and compactly supported.  Furthermore
\begin{align*}
|F_T^{x_1}(x_2)-F_T^{x_1}(y_2)|&\leq\sum_{|j_1|<T}|E_{j_1}M_{b_1}f(x_1,x_2)-f(x_1,y_2)|\\
&\leq\sum_{|j_1|<T}\int_{\R^{n_2}}|\widetilde d_{j_1}^{b_1}(x_2,u_2)-\widetilde d_{j_1}^{b_1}(y_2,u_2)||M_{b_1}D^{b_1}_{j_1}M_{b_1}f(x_1,u_2)|du_2\\
&\less\sum_{|j_1|<T}\int_{\R^{n_2}} (2^{j_1}|x_2-y_2|)^\gamma|D^{b_1}_{j_1}M_{b_1}f(x_1,u_2)|du_2\\
&\less2^{T}|x_2-y_2|^\gamma\sum_{|j_1|<T}||D^{b_1}_{j_1}M_{b_1}f(x_1,\cdot)||_{L^1(\R^{n_2})}\\
&\leq2^{T}|x_2-y_2|^\gamma\sum_{|j_1|<T}||f(x_1,\cdot)||_{L^1(\R^{n_2})}\leq T2^{T+1}||f(x_1,\cdot)||_{L^1(\R^{n_2})}|x_2-y_2|^\gamma.
\end{align*}
Finally, we have that
\begin{align*}
\int_{\R^{n_2}}F_T^{x_1}b_2(x_2)dx_2&=\sum_{|j_1|<T}E_{j_1}M_{b_1}\int_{\R^{n_2}}f(x_1,x_2)b_2(x_2)dx_2=0.
\end{align*}
Then by Theorem 2.3 from \cite{Ha}, it follow that
\begin{align*}
\lim_{N\rightarrow\infty}\left|\left|\sum_{|j_2|<N}E_{j_2}M_{b_2}F_T^{x_1}-F_T^{x_1}\right|\right|_{L^p(\R^{n_2})}=0.
\end{align*}
Then by dominated convergence
\begin{align}
&\lim_{N\rightarrow\infty}\left|\left|\sum_{|j_1|<T,|j_2|<N}E_{\vec j}M_bf-\sum_{|j_1|<T}E_{j_1}M_{b_1}f\right|\right|_{L^p(\R^{n_2})}^p\notag\\
&\hspace{3cm}=\int_{\R^{n_1}}\lim_{N\rightarrow\infty}\left|\left|\sum_{|j_2|<N}E_{j_2}M_{b_2}F_T^{x_1}-F_T^{x_1}\right|\right|_{L^p(\R^{n_2})}^pdx_1=0.\label{limit2}
\end{align}
For each $T>0$, using \eqref{limit2} there exists $N_T>T$ such that
\begin{align*}
&\left|\left|\sum_{|j_1|<T,|j_2|<N_T}E_{\vec j}M_bf-\sum_{|j_1|<T}E_{j_1}M_{b_1}f\right|\right|_{L^p(\R^{n_2})}<\frac{1}{T}.
\end{align*}
This defines the sequence $N_T$, and so now we verify the conclusion of Lemma \ref{l:Econverge}.  Let $\epsilon>0$.  Fix $M>\frac{2}{\epsilon}$ large enough so that for $T>M$
\begin{align*}
&\left|\left|\sum_{|j_1|<T}E_{j_1}M_{b_1}f-f\right|\right|_{L^p(\R^n)}<\frac{\epsilon}{2}.
\end{align*}
Then
\begin{align*}
\left|\left|\sum_{|j_1|<T,|j_2|<N_T}E_{\vec j}M_bf-f\right|\right|_{L^p(\R^n)}&=\left|\left|\sum_{|j_1|<T,|j_2|<N_T}E_{\vec j}M_bf-\sum_{|j_1|<T}E_{j_1}M_{b_1}f\right|\right|_{L^p(\R^n)}+\left|\left|\sum_{|j_1|<T}E_{j_1}M_{b_1}f-f\right|\right|_{L^p(\R^n)}\\
&\hspace{0cm}<\frac{1}{T}+\frac{\epsilon}{2}<\epsilon.
\end{align*}
This completes the proof.
\end{proof}

Now we prove Theorem \ref{t:squarefunction}.

\begin{proof}
Let $b(x)=b_1(x_1)b_2(x_2)$ for $x=(x_1,x_2)\in\R^n$, and $f,g_{\vec k}$ be differentiable, compactly supported such that 
\begin{align*}
\int_{\R^{n_1}}f(x)b(x)dx_1=\int_{\R^{n_2}}f(x)b(x)dx_2=0
\end{align*}
and
\begin{align*}
\left|\left|\(\sum_{\vec k\in\Z^2}|g_{\vec k}|^2\)^\frac{1}{2}\right|\right|_{L^{p'}(\R^n)}\leq1.
\end{align*}
Let  $R>1$, and define
\begin{align*}
\Lambda_R(f)=\sum_{|k_1|,|k_2|<R}\left|\int_{\R^n}\Theta_{\vec k}M_bf(x)g_{\vec k}(x)dx\right|,
\end{align*}
which satisfies 
\begin{align}
0\leq\Lambda_R(f)\less\int_{\R^n}\mathcal M_Sf(x)\sum_{|k_1|,|k_2|<R}|g_{\vec k}(x)|dx\less R ||f||_{L^p}.\label{LambdaRbound2}
\end{align}
Let $S_{k_j}^{b_j}$, $D_{k_j}^{b_j}=S_{k_j+1}^{b_j}-S_{k_j}^{b_j}$, $\widetilde D_{k_j}^{b_j}$, and $D_{\vec k}=D_{k_1}^{b_1}D_{k_2}^{b_2}$ be the operators defined in \eqref{accapptoID}.  Also define $E_{k_j}^{b_j}=\widetilde D_{k_j}^{b_j}M_{b_j}D_{k_j}^{b_j}$ and $E_{\vec k}=E_{k_1}^{b_1} E_{k_2}^{b_2}$, where $\widetilde D_{k_j}^{b_j}$ are the operators from \eqref{Hanformula} that were constructed in Theorem 2.3 of \cite{Ha}.  Let $f:\R^n\rightarrow\C$ be continuous, compactly supported such that
\begin{align*}
\int_{\R^{n_1}}f(x)b_1(x_1)dx_1=\int_{\R^{n_2}}f(x)b_2(x_2)dx_2=0
\end{align*}
for all $x=(x_1,x_2)\in\R^n$.  For $T>1$ it follows that
\begin{align*}
\Lambda_R(f)&\leq\sum_{|k_1|,|k_2|<R}\left|\int_{\R^n}\[\Theta_{\vec k}M_b-\Theta_{\vec k}M_b\(\sum_{|j_1|<T,|j_2|<N_T}E_{\vec j}M_b\)\]f(x)g_{\vec k}(x)dx\right|\\
&\hspace{2.32cm}+\sum_{|k_1|,|k_2|<R}\left|\sum_{|j_1|<T,|j_2|<N_T}\int_{\R^n}\Theta_{\vec k}M_bE_{\vec j}M_bf(x)g_{\vec k}(x)dx\right|=I_T+II_T.
\end{align*}
where $N_T$ are chosen as in Lemma \ref{l:Econverge}.  We first estimate $I_T$ using \eqref{LambdaRbound2}:
\begin{align*}
I_T&=\sum_{|k_1|,|k_2|<R}\left|\int_{\R^n}\[\Theta_{\vec k}M_b\(f(x)-\sum_{|j_1|<T,|j_2|<N_T}E_{\vec j}M_bf(x)\)\]g_{\vec k}(x)dx\right|\\
&\leq\Lambda_R\(f-\sum_{|j_1|<T,|j_2|<N_T}E_{\vec j}M_bf\)\less R\left|\left|f-\sum_{|j_1|<T,|j_2|<N_T}E_{\vec j}M_bf\right|\right|_{L^p},
\end{align*}
which tends to $0$ as $T\rightarrow\infty$ by Lemma \ref{l:Econverge}.  Now we estimate $II_T$ by putting the absolute value inside and summing more terms,
\begin{align*}
II_T&\leq\sum_{\vec k,\vec j\in\Z^2}\int_{\R^n}|\Theta_{\vec k}M_bE_{\vec j}M_bf(x)g_{\vec k}(x)|dx,
\end{align*}
So we now estimate $II_T$.  By Lemma \ref{l:almostorth}, there exists $\epsilon>0$ such that
\begin{align*}
|\Theta_{\vec k}M_bE_{\vec j}f(x)|\less 2^{-\epsilon|k_1-j_1|}2^{-\epsilon|k_2-j_2|}\mathcal M_SD_{\vec j}M_bf(x).
\end{align*}
Then it follows that
\begin{align*}
\Lambda_R(f)&\leq\int_{\R^n}\sum_{\vec j,\vec k\in\Z^2}|\Theta_{\vec k}M_bE_{\vec j}M_bf(x)g_{\vec k}(x)|dx\\
&\hspace{0cm}\less\int_{\R^n}\sum_{\vec j,\vec k\in\Z^2}2^{-\frac{\epsilon}{2}(|k_1-j_1|+|k_2-j_2|)}\mathcal M_S\(D_{\vec j}M_bf\)(x)|g_{\vec k}(x)|dx\\
&\hspace{0cm}\leq\left|\left|\(\sum_{\vec j,\vec k\in\Z^2}2^{-\frac{\epsilon}{2}(|k_1-j_1|+|k_2-j_2|)}\[\mathcal M_S\(D_{\vec j}M_bf\)\]^2\)^\frac{1}{2}\right|\right|_{L^p(\R^n)}\left|\left|\(\sum_{\vec j,\vec k\in\Z^2}2^{-\frac{\epsilon}{2}(|k_1-j_1|+|k_2-j_2|)}|g_{\vec k}|^2\)^\frac{1}{2}\right|\right|_{L^{p'}(\R^n)}\\
&\hspace{0cm}\less\left|\left|\(\sum_{\vec j\in\Z^2}\[\mathcal M_S\(D_{\vec j}M_bf\)\]^2\)^\frac{1}{2}\right|\right|_{L^p(\R^n)}\left|\left|\(\sum_{\vec k\in\Z^2}|g_{\vec k}|^2\)^\frac{1}{2}\right|\right|_{L^{p'}(\R^n)}\hspace{0cm}\less\left|\left|\(\sum_{\vec j\in\Z^2}|D_{\vec j}M_bf|^2\)^\frac{1}{2}\right|\right|_{L^p(\R^n)}\less||f||_{L^p(\R^n)}.
\end{align*}
In the last two lines we use the Fefferman-Stein strong maximal function bound from \cite{rFS1} twice and the multiparameter Littlewood-Paley bound from Lemma \ref{l:vvbound}.  The estimate for general functions $f\in L^p(\R^n)$ follows by density.
\end{proof}

Next we prove a sort of dual pairing bound for biparameter Littlewood-Paley-Stein operators.  This is the estimate that we use to bound the truncations of singular integral operators in the next section.

\begin{proposition}\label{p:dualpairbound}
Let $\Theta_{\vec k}$ be a collection of biparameter Littlewood-Paley-Stein operators with kernels $\theta_{\vec k}$ for $\vec k\in\Z^2$ and $b_1,\tilde b_1\in L^\infty(\R^{n_1})$ and $b_2,\tilde b_2\in L^\infty(\R^{n_2})$ be para-accretive functions.  If
\begin{align*}
\int_{\R^{n_j}}\theta_{\vec k}(x,y)b_j(y_j)dy_j=\int_{\R^{n_j}}\theta_{\vec k}(x,y)\tilde b_j(x_j)dx_j=0
\end{align*}
for $j=1,2$, then for all $f\in L^p(\R^n)$ and $g\in L^{p'}(\R^n)$
\begin{align*}
\sum_{k_1,k_2\in\Z}\left|\int_{\R^2}\Theta_{\vec k}M_bf(x)\tilde b(x)g(x)dx\right|\less||f||_{L^p(\R^n)}||g||_{L^{p'}(\R^n)},
\end{align*}
where $b(x)=b_1(x_1)b_2(x_2)$ and $\tilde b(x)=\tilde b_1(x_1)\tilde b_2(x_2)$ for $x=(x_1,x_2)\in\R^n$.
\end{proposition}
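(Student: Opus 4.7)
The plan is to reduce the absolute pairing sum $\sum_{\vec k}|\langle\Theta_{\vec k}M_bf,\tilde bg\rangle|$ to a product of two biparameter square functions, one built on the $b$-resolution of identity for $f$ and one built on the $\tilde b$-resolution for $g$. We first restrict to a dense class of $(f,g)$ consisting of differentiable, compactly supported functions satisfying the biparameter mean-zero conditions of Lemma \ref{l:Econverge} (mean zero against $b$ in each of the two variable groups for $f$, and analogously against $\tilde b$ for $g$); the general case follows by density. For such $(f,g)$, applying Lemma \ref{l:Econverge} to $f$ with $b$ and its $\tilde b$-analogue to $g$, we expand $f=\sum_{\vec j}\widetilde D^b_{\vec j}M_bD^b_{\vec j}M_bf$ and $g=\sum_{\vec j'}\widetilde D^{\tilde b}_{\vec j'}M_{\tilde b}D^{\tilde b}_{\vec j'}M_{\tilde b}g$, with $\widetilde D^b_{\vec j}=\widetilde D^{b_1}_{j_1}\widetilde D^{b_2}_{j_2}$ and similarly for $\tilde b$.

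Substituting and using adjoints gives
\begin{align*}
\langle\Theta_{\vec k}M_bf,\tilde bg\rangle=\sum_{\vec j,\vec j'}\bigl\langle(\widetilde D^{\tilde b}_{\vec j'})^*M_{\tilde b}\Theta_{\vec k}M_b\widetilde D^b_{\vec j}F_{\vec j},\;G_{\vec j'}\bigr\rangle,
\end{align*}
where $F_{\vec j}=M_bD^b_{\vec j}M_bf$ and $G_{\vec j'}=M_{\tilde b}D^{\tilde b}_{\vec j'}M_{\tilde b}g$. The key intermediate estimate is a \emph{two-sided} almost-orthogonality bound
\begin{align*}
\bigl|(\widetilde D^{\tilde b}_{\vec j'})^*M_{\tilde b}\Theta_{\vec k}M_b\widetilde D^b_{\vec j}h(x)\bigr|\less 2^{-\epsilon|\vec k-\vec j|}\,2^{-\epsilon|\vec k-\vec j'|}\,\mathcal M_Sh(x),
\end{align*}
which strengthens Lemma \ref{l:almostorth} by exploiting cancellations on both sides: on the right, the $y$-cancellation of $\theta_{\vec k}$ against $b$ and the $x$-cancellation of $\widetilde d^b_{\vec j}$ against $b$; on the left, the $x$-cancellation of $\theta_{\vec k}$ against $\tilde b$ and the $y$-cancellation of $\widetilde d^{\tilde b}_{\vec j'}$ against $\tilde b$.

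With this two-sided decay in hand, summing in $\vec k$ first yields a factor $2^{-\epsilon'|\vec j-\vec j'|}$ for some $\epsilon'\in(0,\epsilon)$ (using $|\vec k-\vec j|+|\vec k-\vec j'|\ge|\vec j-\vec j'|$ coordinate-wise, with a slightly smaller exponent absorbing the polynomial slack). Splitting this factor evenly as $2^{-\epsilon'|\vec j-\vec j'|/2}\cdot 2^{-\epsilon'|\vec j-\vec j'|/2}$ and applying the Schur-type Cauchy--Schwarz in $(\vec j,\vec j')$ followed by H\"older in $x$ controls the total sum by
\begin{align*}
\Bigl\|\bigl(\sum_{\vec j}[\mathcal M_SF_{\vec j}]^2\bigr)^{1/2}\Bigr\|_{L^p(\R^n)}\Bigl\|\bigl(\sum_{\vec j'}|G_{\vec j'}|^2\bigr)^{1/2}\Bigr\|_{L^{p'}(\R^n)}.
\end{align*}
The Fefferman--Stein vector-valued strong maximal inequality removes $\mathcal M_S$ from the first factor (up to a constant), and Lemma \ref{l:vvbound}, applied to $(b_1,b_2)$ for $F_{\vec j}$ and to $(\tilde b_1,\tilde b_2)$ for $G_{\vec j'}$, dominates the two factors by $\|M_bf\|_{L^p}\less\|f\|_{L^p}$ and $\|M_{\tilde b}g\|_{L^{p'}}\less\|g\|_{L^{p'}}$ respectively.

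The main obstacle is the two-sided almost-orthogonality estimate itself, since Lemma \ref{l:almostorth} as stated yields only one-sided decay. The cleanest route is to rerun its kernel-level cancellation argument directly on the composed kernel
\begin{align*}
\Xi_{\vec k,\vec j,\vec j'}(x,y)=\iint\widetilde d^{\tilde b}_{\vec j'}(u,x)\,\tilde b(u)\,\theta_{\vec k}(u,v)\,b(v)\,\widetilde d^b_{\vec j}(v,y)\,du\,dv,
\end{align*}
making four independent choices of cancellation: in each biparameter direction $\ell\in\{1,2\}$, one subtracts on the $v$-side the $b_\ell$-average of $\theta_{\vec k}$ or of $\widetilde d^b_{\vec j}$ depending on whether $k_\ell\le j_\ell$, and symmetrically on the $u$-side uses the $\tilde b_\ell$-average of $\theta_{\vec k}$ or $\widetilde d^{\tilde b}_{\vec j'}$ according to whether $k_\ell\le j'_\ell$. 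Taking geometric means of the resulting one-sided estimates, as in the proof of Lemma \ref{l:almostorth}, produces the advertised product decay.
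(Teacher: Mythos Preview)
Your proposal is correct and follows essentially the same route as the paper: restrict to the dense mean-zero class, expand both $f$ and $g$ via the biparameter Calder\'on reproducing formula built from $E^b_{\vec j}=\widetilde D^b_{\vec j}M_bD^b_{\vec j}$ (Lemma \ref{l:Econverge} and its $\tilde b$-version), exploit almost orthogonality on both sides of $\Theta_{\vec k}$ to get simultaneous exponential decay in $|\vec k-\vec j|$ and $|\vec k-\vec j'|$, then finish with Schur--Cauchy--Schwarz, the Fefferman--Stein vector-valued strong maximal inequality, and the biparameter square-function bounds.

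Two small points of comparison. First, the paper does not prove your two-sided pointwise bound with a single $\mathcal M_S$; instead it applies Lemma \ref{l:almostorth} twice at the operator level, once to $D^{\tilde b}_{\vec m}M_{\tilde b}\Theta_{\vec k}$ and once to $\Theta_{\vec k}M_bE^b_{\vec j}$, and takes the geometric mean, which produces $\mathcal M_S^2(D^b_{\vec j}M_bf)$ rather than $\mathcal M_S$. Your kernel-level argument on $\Xi_{\vec k,\vec j,\vec j'}$ would indeed give the sharper single-$\mathcal M_S$ bound, but this refinement is not needed: $\mathcal M_S^2$ still obeys the Fefferman--Stein inequality, and the paper's iteration is shorter. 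Second, the paper keeps $D^{\tilde b}_{\vec m}$ on the operator side and moves $(\widetilde D^{\tilde b}_{\vec m})^*$ onto $g$, which forces it to invoke Theorem \ref{t:squarefunction} for the $(\widetilde D^{\tilde b}_{\vec m})^*$ square function; you make the opposite split, so your $g$-side square function is just Lemma \ref{l:vvbound} for $D^{\tilde b}_{\vec j'}$. Either split works. The paper is also a bit more explicit about the truncation $\Lambda_R$ and the passage $T\to\infty$ through the error terms $I_T,II_T$, but your appeal to Lemma \ref{l:Econverge} covers the same ground.
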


\begin{proof}
Let $f,g$ be differentiable, compactly supported functions such that 
\begin{align*}
\int_{\R^{n_1}}f(x)b(x)dx_1=\int_{\R^{n_2}}f(x)b(x)dx_2=\int_{\R^{n_1}}g(x)\tilde b(x)dx_1=\int_{\R^{n_2}}g(x)\tilde b(x)dx_2=0.
\end{align*}
Define for $R>1$ 
\begin{align*}
\Lambda_R(f,g)=\sum_{|k_1|,|k_2|<R}\left|\int_{\R^n}\Theta_{\vec k}M_bf(x)\tilde b(x)g(x)dx\right|,
\end{align*}
which satisfies 
\begin{align}
0\leq\Lambda_R(f,g)\less\sum_{|k_1|,|k_2|<R}||\mathcal M_Sf||_{L^p(\R^n)}||g||_{L^{p'}(\R^n)}\less R^2||f||_{L^p}||g||_{L^{p'}}.\label{LambdaRbound}
\end{align}
Let $S_{k_j}^{b_j}$, $D_{k_j}^{b_j}=S_{k_j+1}^{b_j}-S_{k_j}^{b_j}$, $\widetilde D_{k_j}^{b_j}$, $D_{\vec k}^b=D_{k_1}^{b_1}D_{k_2}^{b_2}$, and $\widetilde D_{\vec k}^b=\widetilde D_{k_1}^{b_1}\widetilde D_{k_2}^{b_2}$ be the operators defined in \eqref{accapptoID}.  Also define $E_{k_j}^{b_j}=\widetilde D_{k_j}^{b_j}M_{b_j}D_{k_j}^{b_j}$ and $E_{\vec k}^b=E_{k_1}^{b_1} E_{k_2}^{b_2}$, where $\widetilde D_{k_j}^{b_j}$ are the operators constructed in Theorem 2.3 in \cite{Ha}.  We also construct the corresponding operators with $b_j$ replaced by $\tilde b_j$.  Then for $f,g\in C_0^\delta(\R^n)$ for some $0<\delta\leq1$ where $bf$ and $\tilde bg$ have mean zero in both $x_1$ and $x_2$, it follows that
\begin{align*}
\Lambda_R&(f,g)\leq \limsup_{T\rightarrow\infty}\; I_T+II_T+III_T,
\end{align*}
where
\begin{align*}
&I_T=\sum_{|k_1|,|k_2|<R}\left|\int_{\R^n}\[\Theta_{\vec k}M_b-\Theta_{\vec k}M_b\(\sum_{|j_1|<T,|j_2|<N_T}E_{\vec j}^bM_b\)\]f(x)M_{\tilde b}g(x)dx\right|,\\
&II_T=\sum_{|k_1|,|k_2|<R}\left|\int_{\R^n}\[\Theta_{\vec k}M_b\(\sum_{|j_1|<T,|j_2|<N_T}E_{\vec j}^bM_b\)\right.\right.\\
&\hspace{2cm}\left.\left.-\(\sum_{|m_1|<T,|m_2|<M_T}E_{\vec m}^{\tilde b}M_{\tilde b}\)\Theta_{\vec k}M_b\(\sum_{|j_1|<T,|j_2|<N_T}E_{\vec j}^bM_b\)\]f(x)M_{\tilde b}g(x)dx\right|,\\
&III_T=\sum_{|k_1|,|k_2|<R}\left|\sum_{|j_1|<T,|j_2|<N_T,|m_1|<T,|m_2|<M_T}\int_{\R^n}E_{\vec m}^{\tilde b}M_{\tilde b}\Theta_{\vec k}M_bE_{\vec j}^bM_bf(x)M_{\tilde b}g(x)dx\right|,
\end{align*}
where $N_T$ and $M_T$ are chosen as in Lemma \ref{l:Econverge} for $f$ and $g$ respectively.  We first estimate $I_T$ using \eqref{LambdaRbound} and Lemma \ref{l:Econverge}:
\begin{align*}
I_T&=\sum_{|k_1|,|k_2|<R}\left|\int_{\R^n}\[\Theta_{\vec k}M_b\(f(x)-\sum_{|j_1|<T,|j_2|<N_T}E_{\vec j}^bM_bf(x)\)\]M_{\tilde b}g(x)dx\right|\\
&\leq\Lambda_R\(f-\sum_{|j_1|<T,|j_2|<N_T}E_{\vec j}^bM_bf,g\)\less R\left|\left|f-\sum_{|j_1|<T,|j_2|<N_T}E_{\vec j}^bM_bf\right|\right|_{L^p(\R^n)}||g||_{L^{p'}(\R^n)},
\end{align*}
which tends to $0$ as $T\rightarrow\infty$.  Now we estimate $II_T$ again using \eqref{LambdaRbound} and Lemma \ref{l:Econverge},
\begin{align*}
II_T&=\sum_{|k_1|,|k_2|<R}\left|\int_{\R^n}\[{\bf I}-\sum_{|m_1|<T,|m_2|<M_T}E_{\vec m}^{\tilde b}M_{\tilde b}\]\Theta_{\vec k}M_b \(\sum_{|j_1|<T,|j_2|<N_T}E_{\vec k}^bM_b \)f(x)M_{\tilde b} g(x)dx\right|\\
&=\Lambda_R\(\sum_{|j_1|<T,|j_2|<N_T}E_{\vec j}^bM_bf,g-\sum_{|m_1|<T,|m_2|<M_T}E_{\vec m}^{\tilde b}M_{\tilde b}g\)\\
&\less R\left|\left|\sum_{|j_1|<T,|j_2|<N_T}E_{\vec j}^bM_bf\right|\right|_{L^p(\R^n)}\left|\left|g-\sum_{|m_1|<T,|m_2|<M_T}E_{\vec m}^{\tilde b}M_{\tilde b}g\right|\right|_{L^{p'}(\R^n)}\\
&\less R||f||_{L^p(\R^n)}\left|\left|g-\sum_{|m_1|<T,|m_2|<M_T}E_{\vec m}^{\tilde b}M_{\tilde b}g\right|\right|_{L^{p'}(\R^n)},
\end{align*}
where ${\bf I}$ is the identity operator.  This term also tends to $0$ as $T\rightarrow\infty$ by Lemma \ref{l:Econverge}.  So we are left with the third term, to estimate $\Lambda_R$
\begin{align}
\Lambda_R(f,g)&\leq\limsup_{T\rightarrow\infty}\sum_{|k_1|,|k_2|<R}\left|\sum_{|j_1|<T,|j_2|<N_T,|m_1|<T,|m_2|<M_T}\int_{\R^n}E_{\vec m}^{\tilde b}M_{\tilde b}\Theta_{\vec k}M_bE_{\vec j}^bM_bf(x)M_{\tilde b}g(x)dx\right|\notag\\
&\leq\sum_{\vec k,\vec j,\vec m\in\Z^2}\left|\int_{\R^n}M_bD_{\vec m}^{\tilde b}M_{\tilde b}\Theta_{\vec k}M_bE_{\vec j}^bM_bf(x)(\widetilde D_{\vec m}^{\tilde b})^*M_{\tilde b}g(x)dx\right|.\label{III}
\end{align}
So we now estimate \eqref{III}.  By Lemma \ref{l:almostorth}, there exists $\epsilon>0$ such that
\begin{align*}
&|D_{\vec m}^{\tilde b}M_{\tilde b}\Theta_{\vec k}M_bE_{\vec j}^bf(x)|\less 2^{-\epsilon|m_1-k_1|}2^{-\epsilon|m_2-k_2|}\mathcal M_S^2D_{\vec j}^bf(x),\hspace{.25cm}\text{and}\\
&|D_{\vec m}^{\tilde b}M_{\tilde b}\Theta_{\vec k}M_bE_{\vec j}^bf(x)|\less\mathcal M_S(\Theta_{\vec k}M_bE_{\vec j}^bf)(x)\less 2^{-\epsilon|k_1-j_1|}2^{-\epsilon|k_2-j_2|}\mathcal M_S^2D_{\vec j}^bf(x).
\end{align*}
Therefore we also have
\begin{align}
|D_{\vec m}^{\tilde b}M_{\tilde b}\Theta_{\vec k}M_bE_{\vec j}^bf(x)|\less2^{-\frac{\epsilon}{2}(|m_1-k_1|+|m_2-k_2|+|k_1-j_1|+|k_2-j_2|}\mathcal M_S^2D_{\vec j}^bf(x).\label{IIIAObound}
\end{align}
Using \eqref{IIIAObound} we have
\begin{align*}
&\int_{\R^n}\sum_{\vec j,\vec k,\vec m\in\Z^2}|M_{\tilde b}D_{\vec m}^{\tilde b}M_{\tilde b}\Theta_{\vec k}M_bE_{\vec j}^bM_bf(x)(\widetilde D_{\vec m}^{\tilde b})^*M_{\tilde b}g(x)|dx\less\int_{\R^n}\sum_{\vec j,\vec k,\vec m\in\Z^2}2^{-\frac{\epsilon}{2}(|m_1-k_1|+|m_2-k_2|+|k_1-j_1|+|k_2-j_2|)}\\
&\hspace{10cm}\times\mathcal M_S^2\(D_{\vec j}^bM_bf\)(x)(\widetilde D_{\vec m}^{\tilde b})^*M_{\tilde b}g(x)|dx\\
&\hspace{.5cm}\leq\left|\left|\(\sum_{\vec j,\vec k,\vec m\in\Z^2}2^{-\frac{\epsilon}{2}(|m_1-k_1|+|m_2-k_2|+|k_1-j_1|+|k_2-j_2|)}\[\mathcal M_S^2\(D_{\vec j}^bM_bf\)\]^2\)^\frac{1}{2}\right|\right|_{L^p(\R^n)}\\
&\hspace{2cm}\times\left|\left|\(\sum_{\vec j,\vec k,\vec m\in\Z^2}2^{-\frac{\epsilon}{2}(|m_1-k_1|+|m_2-k_2|+|k_1-j_1|+|k_2-j_2|)}|(\widetilde D_{\vec m}^{\tilde b})^*M_{\tilde b}g|^2\)^\frac{1}{2}\right|\right|_{L^{p'}(\R^n)}\\
&\hspace{.5cm}\less\left|\left|\(\sum_{\vec j\in\Z^2}\[\mathcal M_S^2\(D_{\vec j}^bM_bf\)\]^2\)^\frac{1}{2}\right|\right|_{L^p(\R^n)}\left|\left|\(\sum_{\vec m\in\Z^2}|(\widetilde D_{\vec m}^{\tilde b})^*M_{\tilde b}g|^2\)^\frac{1}{2}\right|\right|_{L^{p'}(\R^n)}\\
&\hspace{.5cm}\less\left|\left|\(\sum_{\vec j\in\Z^2}|D_{\vec j}^bM_bf|^2\)^\frac{1}{2}\right|\right|_{L^p(\R^n)}||g||_{L^{p'}(\R^n)}\less||f||_{L^p(\R^n)}||g||_{L^{p'}(\R^n)}.
\end{align*}
In the last two lines we use the Fefferman-Stein maximal function bound from \cite{rFS1} twice and the biparameter Littlewood-Paley-Stein bound proved  in Theorem \ref{t:squarefunction}.  Recall that the square function associated to $(\widetilde D_{\vec m}^{\tilde b})^*$ is bounded on $L^p(\R^n)$ for $1<p<\infty$.  The estimate for general functions $f\in L^p(\R^n)$ and $g\in L^{p'}(\R^n)$ follows by density.
\end{proof}

\section{A Biparameter Tb Theorem}

We define the class of test functions that will be used to define biparameter singular integral operators.  Define $C_0^{0,\delta}(\R^n)$ to be the collection of all $\delta$-H\"older continuous, compactly supported functions $f:\R^n\rightarrow\C$ with norm
\begin{align*}
||f||_\delta=\sup_{x\neq y}\frac{|f(x)-f(y)|}{|x-y|^\delta}<\infty.
\end{align*}
Since $C_0^{0,\delta}(\R^n)$ is made up of compactly supported functions, it follows that $||\cdot||_\delta$ is a norm, and we endow $C_0^{0,\delta}(\R^n)$ the topology generated by the norm $||\cdot||_\delta$.  Given a function $b\in L^\infty(\R^n)$ such that $b^{-1}\in L^\infty(\R^n)$, let $bC_0^{0,\delta}(\R^n)$ be the collection of functions $bf$ such that $f\in C_0^{0,\delta}(\R^n)$.  We define $||bf||_{b,\delta}=||f||_{\delta}$ for $bf\in bC_0^{0,\delta}(\R^n)$, and endow $bC_0^{0,\delta}(\R^n)$ the topology generated by the norm $||\cdot||_{b,\delta}$.  Finally, given a function space $X$, we define $X'$ to be the continuous dual of $X$ with the weak$^*$ topology.  In our situation, we will primarily use this definition for $X=bC_0^{0,\delta}(\R^n)$.

\begin{definition}
We say that $K$ a standard biparameter kernel on $\R^n=\R^{n_1}\times\R^{n_2}$ if
\begin{align}
&|K(x,y)|\less\frac{1}{|x_1-y_1|^{n_1}\,|x_2-y_2|^{n_2}}\hspace{.25cm}\text{ for }|x_1-y_1|,|x_2-y_2|\neq0\\
&|K(x,y)-K(x_1',x_2,y)-K(x_1,x_2',y)+K(x_1',x_2',y)|\less\frac{|x_1-x_1'|^{\gamma}|x_2-x_2'|^{\gamma}}{|x_1-y_1|^{n_1+\gamma}|x_2-y_2|^{n_2+\gamma}}\label{regx}\\
&\hspace{2.9cm}\text{ whenever }|x_1-x_1'|<|x_1-y_1|/2\text{ and }|x_2-x_2'|<|x_2-y_2|/2\notag,
\end{align}
\begin{align}
&|K(x,y)-K(x,y_1',y_2)-K(x,y_1,y_2')+K(x,y_1',y_2')|\less\frac{|y_1-y_1'|^{\gamma}|y_2-y_2'|^{\gamma}}{|x_1-y_1|^{n_1+\gamma}|x_2-y_2|^{n_2+\gamma}}\label{regy}\\
&\hspace{2.9cm}\text{ whenever }|y_1-y_1'|<|x_1-y_1|/2\text{ and }|y_2-y_2'|<|x_2-y_2|/2.\notag,\\
&|K(x,y)-K(x,y_1',y_2)-K(x_1,x_2',y)+K(x_1,x_2',y_1',y_2)|\less\frac{|y_1-y_1'|^{\gamma}|x_2-x_2'|^{\gamma}}{|x_1-y_1|^{n_1+\gamma}|x_2-y_2|^{n_2+\gamma}}\label{mreg1}\\
&\hspace{2.9cm}\text{ whenever }|y_1-y_1'|<|x_1-y_1|/2\text{ and }|x_2-x_2'|<|x_2-y_2|/2.\notag,\\
&|K(x,y)-K(x,y_1,y_2')-K(x_1',x_2,y)+K(x_1',x_2,y_1,y_2')|\less\frac{|x_1-x_1'|^{\gamma}|y_2-y_2'|^{\gamma}}{|x_1-y_1|^{n_1+\gamma}|x_2-y_2|^{n_2+\gamma}}\label{mreg1}\\
&\hspace{2.9cm}\text{ whenever }|x_1-x_1'|<|x_1-y_1|/2\text{ and }|y_2-y_2'|<|x_2-y_2|/2.\notag
\end{align}
Let $b_1,\tilde b_1\in L^\infty(\R^{n_1})$ and $b_2,\tilde b_2\in L^\infty(\R^{n_2})$ be para-accretive functions and define $b(x)=b_1(x_1)b_2(x_2)$ and $\tilde b(x)=\tilde b_1(x_1)\tilde b_2(x_2)$ for $x=(x_1,x_2)\in\R^n$.  A linear operator $T$ that is continuous from $bC_0^{0,\delta}(\R^n)$ into $(\tilde bC_0^{0,\delta}(\R^n))'$ for some $0<\delta\leq1$ is a biparameter singular integral operator of Calder\'on-Zygmund type associated to $b,\tilde b$ if
\begin{align*}
\<M_{\tilde b}TM_bf,g\>=\int_{\R^{2n}}K(x,y)f(y)g(x)\tilde b(x)b(y)dx\,dy
\end{align*}
is an absolutely convergent integral whenever $f,g\in C_0^{0,\delta}(\R^n)$ and
\begin{align*}
\bigcup_{x_1,y_1\in\R^{n_1}}\supp(f(y_1,\cdot))\cap\supp(g(x_1,\cdot))=\bigcup_{x_2,y_2\in\R^{n_2}}\supp(f(\cdot,y_2))\cap\supp(g(\cdot,x_2))=\emptyset.
\end{align*}

\end{definition}

\begin{definition}\label{d:WBP}
A function $\phi\in C_0^\infty(\R^n)$ is a normalized bump of order $m\in\N$ if $\supp(\phi)\subset B(0,1)\subset\R^n$ and for all $\alpha\in\N_0^n$ with $|\alpha|\leq m$
\begin{align*}
||\partial^\alpha\phi||_{L^\infty(\R^n)}\leq1.
\end{align*}
Let $T$ be a biparameter singular integral operator of Calder\'on-Zygmund type associated to $b(x)=b_1(x_1)b_2(x_2)$ and $\tilde b(x)=\tilde b_1(x_1)\tilde b_2(x_2)$ for $x=(x_1,x_2)\in\R^n$, where $b_1,\tilde b_1\in L^\infty(\R^{n_1})$ and $b_2,\tilde b_2\in L^\infty(\R^{n_2})$ are para-accretive functions.  We say $T$ satisfies the biparameter weak boundedness property if there exists $m\in\N$ such that the following holds:   let $\varphi_j,\psi_j\in C_0^\infty(\R^{n_j})$ be normalized bumps of order $m$.  Let $x=(x_1,x_2)\in\R^n$ and $R_1,R_2>0$.  Assume that either $b_1\varphi_1^{x_1,R_1}$ or $\tilde b_1\psi_1^{x_1,R_1} $ has mean zero and that either $b_2\varphi_2^{x_2,R_2}$ or $\tilde b_2\psi_2^{x_2,R_2} $ has mean zero.  Then 
\begin{align}
&\left|\<M_{\tilde b}TM_b(\varphi_1^{x_1,R_1}\otimes \varphi_2^{x_2,R_2}),\psi_1^{x_1,R_1}\otimes \psi_2^{x_2,R_2}\>\right|\less R_1^{n_1}R_2^{n_2},\label{WBP1}
\end{align}
where $\phi^{x_j,R_j}(u_j)=\varphi\(\frac{u_j-x_j}{R_j}\)$.
\end{definition}

\begin{definition}\label{d:mixedWBP}
Let $T$ be a biparameter singular integral operator of Calder\'on-Zygmund type associated to  $b(x)=b_1(x_1)b_2(x_2)$ and $\tilde b(x)=\tilde b_1(x_1)\tilde b_2(x_2)$ for $x=(x_1,x_2)\in\R^n$, where $b_1,\tilde b_1\in L^\infty(\R^{n_1})$ and $b_2,\tilde b_2\in L^\infty(\R^{n_2})$ are para-accretive functions.  We say $T$ satisfies the mixed biparameter weak boundedness property if there exists $m\in\N$ and $0<\gamma\leq1$ such that the following two conditions hold: (1)  Let be $R_1,R_2>0$, $x_1,y_1\in\R^{n_1}$ with $|x_1-y_1|>4R_1$, and $x_2\in\R^{n_2}$ and let $\varphi_j,\psi_j\in C_0^\infty(\R^{n_j})$ be normalized bumps of order $m$.  Then
\begin{align}
&\left|\<M_{\tilde b}TM_b(\varphi_1^{y_1,R_1}\otimes \varphi_2^{x_2,R_2}),\psi_1^{x_1,R_1}\otimes \psi_2^{x_2,R_2}\>\right|\less \frac{R_1^{n_1}R_2^{n_2}}{(R_1^{-1}|x_1-y_1|)^{n_1}}.\label{WBP2}
\end{align}
Further assume that either $b_1\varphi_1^{y_1,R_1}$ or $\tilde b_1\psi_1^{x_1,R_1} $ has mean zero and that either $b_2\varphi_2^{x_2,R_2}$ or $\tilde b_2\psi_2^{x_2,R_2} $ has mean zero.  Then
\begin{align}
&\left|\<M_{\tilde b}TM_b(\varphi_1^{y_1,R_1}\otimes \varphi_2^{x_2,R_2}),\psi_1^{x_1,R_1}\otimes \psi_2^{x_2,R_2}\>\right|\less \frac{R_1^{n_1}R_2^{n_2}}{(R_1^{-1}|x_1-y_1|)^{n_1+\gamma}}.\label{WBP3}
\end{align}
(2)  Let be $R_1,R_2>0$, $x_2,y_2\in\R^{n_1}$ with $|x_2-y_2|>4R_2$, and $x_2\in\R^{n_2}$ and let $\varphi_j,\psi_j\in C_0^\infty(\R^{n_j})$ be normalized bumps of order $m$.  Then
\begin{align}
&\left|\<M_{\tilde b}TM_b(\varphi_1^{x_1,R_1}\otimes \varphi_2^{y_2,R_2}),\psi_1^{x_1,R_1}\otimes \psi_2^{x_2,R_2}\>\right|\less \frac{R_1^{n_1}R_2^{n_2}}{(R_2^{-1}|x_2-y_2|)^{n_2}}.\label{WBP4}
\end{align}
Further assume that either $b_1\varphi_1^{x_1,R_1}$ or $\tilde b_1\psi_1^{x_1,R_1} $ has mean zero and that either $b_2\varphi_2^{y_2,R_2}$ or $\tilde b_2\psi_2^{x_2,R_2} $ has mean zero.    Then, 
\begin{align}
&\left|\<M_{\tilde b}TM_b(\varphi_1^{x_1,R_1}\otimes \varphi_2^{y_2,R_2}),\psi_1^{x_1,R_1}\otimes \psi_2^{x_2,R_2}\>\right|\less \frac{R_1^{n_1}R_2^{n_2}}{(R_2^{-1}|x_2-y_2|)^{n_2+\gamma}}.\label{WBP5}
\end{align}

\end{definition}

\begin{lemma}\label{l:kernelconditions}
Suppose $b_1,\tilde b_1\in L^\infty(\R^{n_1})$ and $b_2,\tilde b_2\in L^\infty(\R^{n_2})$ are para-accretive functions, and define $b(x)=b_1(x_1)b_2(x_2)$ and $\tilde b(x)=\tilde b_1(x_1)\tilde b_2(x_2)$ for $x=(x_1,x_2)\in\R^n$.  Let $T$ be a biparameter singular integral operator of Calder\'on-Zygmund type associated to $b$ and $\tilde b$ with standard biparameter kernel $K$.  Also assume that $M_{\tilde b}TM_{b}$ satisfies the biparameter weak boundedness and the mixed weak boundedness properties.  Define $\Theta_{\vec k}$ for $\vec k\in\Z^2$ by integration against its kernel $\theta_{\vec k}$, as in \eqref{theta}, where
\begin{align}
\theta_{\vec k}(x,y)=\<M_{\tilde b}TM_b(s_{k_1}^{b_1}(\cdot,y_1)\otimes s_{k_2}^{b_2}(\cdot,y_2)),d_{k_1}^{\tilde b_1}(x_1,\cdot)\otimes d_{k_2}^{\tilde b_2}(x_2,\cdot)\>.
\end{align}
Then $\Theta_{\vec k}$ for $\vec k\in\Z^2$ is a collection of Littlewood-Paley-Stein operators and
\begin{align*}
\int_{\R^{n_1}}\theta_{\vec k}(x,y)\tilde b_1(x_1)dx_1=\int_{\R^{n_2}}\theta_{\vec k}(x,y)\tilde b_2(x_2)dx_2=0.
\end{align*}
\end{lemma}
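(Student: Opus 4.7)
The plan is to establish the two cancellation conditions first, and then verify the size and regularity conditions required for $\{\theta_{\vec k}\}$ to be a collection of biparameter Littlewood--Paley--Stein kernels. The cancellation is essentially free: since $\varphi$ is chosen as an even bump, $P_k$ has a symmetric kernel, so $S_k^b = P_k M_{(P_kb)^{-1}} P_k$ is (bilinearly) self-adjoint and hence $d_k^b(x,u) = d_k^b(u,x)$. The DJS identity $S_k^b b = P_k(\mathbf{1}) = 1$ gives $D_k^b b = 0$, and by symmetry $\int d_k^b(u,x) b(u)\,du = 0$ as well. Applied to $d_{k_1}^{\tilde b_1}$ and $d_{k_2}^{\tilde b_2}$, this allows us to interchange integration and pairing in the definition of $\theta_{\vec k}$, reducing the integrated kernel to a pairing against zero and yielding both cancellation conditions.

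The size estimate \eqref{size} will be handled by a four-case analysis. Writing $\varphi_j^{y_j}(v) = 2^{-k_j n_j} s_{k_j}^{b_j}(v,y_j)$ and $\psi_j^{x_j}(u) = 2^{-k_j n_j} d_{k_j}^{\tilde b_j}(x_j,u)$, each is a constant multiple of a normalized bump of scale $2^{-k_j}$, and by the cancellation above $\tilde b_j\,\psi_j^{x_j}$ has mean zero. The four cases are determined by whether $2^{k_j}|x_j - y_j| \lesssim 1$ or $\gtrsim 1$ for each $j \in \{1,2\}$. When both quantities are $O(1)$, the biparameter weak boundedness property \eqref{WBP1} applies directly. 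When only one variable --- say, the first --- is far apart, we invoke the mixed weak boundedness estimate \eqref{WBP3} (which applies because $\tilde b_1\,\psi_1^{x_1}$ has mean zero), producing the decay factor $(2^{k_1}|x_1 - y_1|)^{-n_1-\gamma}$. When both variables are far apart, the supports of $\varphi_1 \otimes \varphi_2$ and $\psi_1 \otimes \psi_2$ are disjoint in each factor, so we substitute the integral representation of $T$ against the standard biparameter kernel $K$ and combine the size bound on $K$ with the $L^1$ norms of the bumps. After rescaling, each of these cases matches the form required by the alternate condition set in Remark~\ref{r:kernelequiv} with exponents $N_j' = n_j + \gamma > n_j$, and thus transfers back to the stated \eqref{size}.

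The regularity conditions \eqref{regx1}--\eqref{regy2} are handled by the identical case split applied to difference kernels. For instance, for \eqref{regx1} I write
\[
\theta_{\vec k}(x,y) - \theta_{\vec k}(x_1',x_2,y) = \langle M_{\tilde b} T M_b(s_{k_1}^{b_1}(\cdot,y_1) \otimes s_{k_2}^{b_2}(\cdot,y_2)), [d_{k_1}^{\tilde b_1}(x_1,\cdot) - d_{k_1}^{\tilde b_1}(x_1',\cdot)] \otimes d_{k_2}^{\tilde b_2}(x_2,\cdot) \rangle,
\]
and use the smoothness bound $|d_{k_1}^{\tilde b_1}(x_1,u) - d_{k_1}^{\tilde b_1}(x_1',u)| \lesssim 2^{k_1 n_1}(2^{k_1}|x_1-x_1'|)^\gamma$ to treat the difference as a rescaled bump with an extra factor $(2^{k_1}|x_1 - x_1'|)^\gamma$; the ``far-far'' subcase instead exploits the first-variable regularity of $K$ provided by \eqref{regx} and \eqref{mreg1}. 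The main obstacle will be the bookkeeping in the mixed cases where one variable is ``close'' and the other ``far'': one must verify the correct mean-zero hypothesis of \eqref{WBP3} or \eqref{WBP5} in each sub-case and reconcile the polynomial decay produced by mixed weak boundedness with the $N_j + \gamma$-order decay demanded by \eqref{size}--\eqref{regy2} through the equivalence in Remark~\ref{r:kernelequiv}. Everything else amounts to translating the single-parameter DJS argument into the product setting.
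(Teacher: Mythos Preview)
Your four-case plan for \eqref{size} and the regularity estimates follows the paper's outline, with two corrections worth noting. In the ``both far'' case the bare size bound on $K$ yields only $N_j'=n_j$, which does \emph{not} satisfy $N_j'>n_j$ as Remark~\ref{r:kernelequiv} requires; you must use the $\tilde b_j$-mean-zero of $d_{k_j}^{\tilde b_j}(x_j,\cdot)$ to subtract values of $K$ and invoke the kernel regularity \eqref{regx}, as the paper does, to gain the extra $\gamma$. For \eqref{regx1}--\eqref{regy2} the paper takes a shortcut you may prefer: since the DJS kernels are $C^\infty$, $\theta_{\vec k}$ is differentiable in $x$ and $y$, and $2^{-k_1(n_1+1)}\partial_{x_{1,j}} d_{k_1}^{\tilde b_1}(x_1,\cdot)$ is again (up to a constant) a normalized bump with the same $\tilde b_1$-mean-zero. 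Running the size argument once more with this bump gives $|\nabla_{x_1}\theta_{\vec k}|\lesssim 2^{k_1(n_1+1)}2^{k_2 n_2}$, and Remark~\ref{r:kernelequiv} finishes. No separate case split for difference kernels is needed.

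The genuine gap is the cancellation argument. The identity $\int_{\R^{n_1}} d_{k_1}^{\tilde b_1}(x_1,u_1)\tilde b_1(x_1)\,dx_1 = 0$ is correct, but you cannot simply push the unbounded $x_1$-integral through the pairing. The operator $T$ is only continuous from $bC_0^{0,\delta}$ into $(\tilde bC_0^{0,\delta})'$, and the formal limit test function $u_1\mapsto\int_{\R^{n_1}} d_{k_1}^{\tilde b_1}(x_1,u_1)\tilde b_1(x_1)\,dx_1$ is identically zero, which tells you nothing about the truncation error. One must truncate to $|x_1|\le R$, interchange (this is what continuity on compacta buys), and then show that the resulting test function $\lambda_{R,k_1}(u_1)=\int_{|x_1|\le R} d_{k_1}^{\tilde b_1}(x_1,u_1)\tilde b_1(x_1)\,dx_1$, which is supported in the thin annulus $R-2^{-k_1}\le|u_1|\le R+2^{-k_1}$, produces a pairing tending to $0$ as $R\to\infty$. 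That step is the actual content of the paper's proof: one decomposes $\lambda_{R,k_1}$ into $O((2^{k_1}R)^{n_1-1})$ localized bumps via a Vitali covering and partition of unity, applies \eqref{WBP2} (when $|x_2-y_2|\le 2^{-k_2+2}$) or the full kernel representation (when $|x_2-y_2|>2^{-k_2+2}$) to each piece, and sums. Each piece contributes $O((2^{k_1}R)^{-n_1})$, so the total is $O((2^{k_1}R)^{-1})\to 0$. This is not ``essentially free''.
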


\begin{proof}
Fix $x,y\in\R^n$ such that $|x_1-y_1|\leq2^{-k_1+2}$ and $|x_2-y_2|\leq2^{-k_2+2}$.  Then using \eqref{WBP1} 
\begin{align*}
&|\theta_{\vec k}(x,y)|\\
&=2^{2k_1n_1}2^{2k_2n_2}\left|\<M_{\tilde b}TM_b\(\phi_1^{\frac{x_1+y_1}{2},2^{-k_1+2}}\otimes\phi_2^{\frac{x_2+y_2}{2},2^{-k_2+2}}\),\phi_3^{\frac{x_1+y_1}{2},2^{-k_1+2}}\otimes\phi_4^{\frac{x_2+y_2}{2},2^{-k_2+2}}\>\right|\\
&\less 2^{k_1n_1}2^{k_2n_2}\less\Phi_{k_1}^{n_1+\gamma}(x_1-y_1)\Phi_{k_2}^{n_2+\gamma}(x_2-y_2).
\end{align*}
where $\phi_1,\phi_2,\phi_3,\phi_4$ are normalized bumps of order $m$ (up to a constant multiple independent of $x$, $y$, and $\vec k$) of the form
\begin{align*}
&\phi_1(u_1)=2^{-k_1n_1}s_{k_1}^{b_1}\(2^{-k_1+2}u_1+\frac{x_1+y_1}{2},y_1\),& && &\phi_2(u_2)=2^{-k_2n_2}s_{k_2}^{b_2}\(2^{-k_2+2}u_1+\frac{x_2+y_2}{2},y_2\),&\\
&\phi_3(v_1)=2^{-k_1n_1}d_{k_1}^{\tilde b_1}\(x_1,2^{-k_1+2}v_1+\frac{x_1+y_1}{2}\),& &\text{ and}& &\phi_4(v_2)=2^{-k_2n_2}d_{k_2}^{\tilde b_2}\(x_2,2^{-k_2+2}v_2+\frac{x_2+y_2}{2}\).
\end{align*}
It is not hard to verify that $2^{k_1n_1}\phi_1^{\frac{x_1+y_1}{2},2^{-k_1+2}}(u_1)=s_{k_1}(u_1,y_1)$ for $u_1\in\R^{n_1}$ and likewise for the other three terms.  This completes the proof of \eqref{size} when both $x_1,y_1$ and $x_2,y_2$ are close.  Now fix $x,y\in\R^n$ such that $|x_1-y_1|>2^{-k_1+2}$ and $|x_2-y_2|>2^{-k_2+2}$.  It follows that 
\begin{align*}
\supp(s_{k_1}^{b_1}(\cdot,y_1))\cap\supp(d_{k_1}^{\tilde b_1}(x_1,\cdot))=\supp(s_{k_2}^{b_2}(\cdot,y_2))\cap\supp(d_{k_2}^{\tilde b_2}(x_2,\cdot))=\emptyset.
\end{align*}
Then we can use the kernel representation of $T$ to write
\begin{align*}
|\theta_{\vec s}(x,y)|&=\left|\int_{\R^{2n}}K(u,v)s_{k_1}^{b_1}(v_1,y_1)d_{k_1}^{\tilde b_1}(x_1,u_1)s_{k_2}^{b_2}(v_2,y_2)d_{k_2}^{\tilde b_2}(x_2,u_2)\tilde b(u)b(v)du\,dv\right|\\
&\less\int_{\R^{2n}}|K(u,v)-K(x_1,u_2,v_1,v_2)-K(u_1,x_2,v_1,v_2)+K(x_1,x_2,v_1,v_2)|\\
&\hspace{4.25cm}\times|s_{k_1}^{b_1}(v_1,y_1)d_{k_1}^{\tilde b_1}(x_1,u_1)s_{k_2}^{b_2}(v_2,y_2)d_{k_2}^{\tilde b_2}(x_2,u_2)|du\,dv\\
&\leq\int_{|y_i-v_i|<2^{-k_i}}\int_{|x_i-u_i|<2^{-k_i}}\frac{|x_1-u_1|^{\gamma}|x_2-u_2|^{\gamma}}{|x_1-v_1|^{n_1+\gamma}|x_2-v_2|^{n_2+\gamma}}2^{2k_1n_1}2^{2k_2n_2}du\,dv\\
&\leq\int_{|y_i-v_i|<2^{-k_i}}\int_{|x_i-u_i|<2^{-k_i}}\frac{2^{k_1(2n_1-\gamma)}2^{k_2(2n_2-\gamma)}}{(|x_1-y_1|/2+2^{-k_1})^{n_1+\gamma}(|x_2-y_2|/2+2^{-k_2})^{n_2+\gamma}}du\,dv\\
&\less\frac{2^{-\gamma k_1}2^{-\gamma k_2}}{(|x_1-y_1|+2^{-k_1})^{n_1+\gamma}(|x_2-y_2|+2^{-k_2})^{n_2+\gamma}}=\Phi_{k_1}^{n_1+\gamma}(x_1-y_1)\Phi_{k_2}^{n_2+\gamma}(x_2-y_2).
\end{align*}
Fix $x,y\in\R^n$ such that $|x_1-y_1|\leq2^{-k_1+2}$ and $|x_2-y_2|>2^{-k_2+2}$.  Then we can write
\begin{align*}
|\theta_{\vec s}(x,y)|&=\left|\<M_{\tilde b}TM_b\(s_{k_1}^{b_1}(\cdot,y_1)\otimes s_{k_2}^{b_2}(\cdot,y_2)\),d_{k_1}^{\tilde b_1}(x_1,\cdot)\otimes d_{k_2}^{\tilde b_2}(x_2,\cdot)\>\right|\\
&=2^{2k_1n_1}2^{2k_2n_2}\left|\<M_{\tilde b}TM_b\(\widetilde\phi_1^{y_1,2^{-k_1}}\otimes \phi_2^{\frac{x_2+y_2}{2},2^{-k_2+2}}\),\widetilde \phi_3^{x_1,2^{-k_1}}\otimes \phi_4^{\frac{x_2+y_2}{2},2^{-k_2+2}}\>\right|,
\end{align*}
where
\begin{align*}
&\widetilde\phi_1(u_1)=2^{-k_1n_1}s_{k_1}^{b_1}(2^{-k}u_1+y_1,y_1)& &\text{and}& &\widetilde\phi_3(v_1)=2^{-k_1n_1}d_{k_1}^{\tilde b_1}(x_1,2^{-k}v_1+x_1)&
\end{align*}
again are normalized bumps of order $m$ (up to a constant multiple independent of $x$, $y$, and $\vec k$).  Since $|x_2-y_2|>4\cdot2^{-k_2}$, we can apply \eqref{WBP5} to obtain the following estimate.
\begin{align*}
|\theta_{\vec k}(x,y)|&\less2^{2k_1n_1}2^{2k_2n_2}\(\frac{2^{-k_1n_1}2^{-k_2n_2}}{(2^{k_2}|x_2-y_2|)^{n_2+\gamma}}\)\\
&\less\frac{2^{k_1n_1}2^{k_2n_2}}{(1+2^{k_2}|x_2-y_2|)^{n_2+\gamma}}\less\Phi_{k_1}^{n_1+\gamma}(x_1-y_1)\Phi_{k_2}^{n_2+\gamma}(x_2-y_2).
\end{align*}
A similar argument using \eqref{WBP3} proves that \eqref{size} holds when $|x_1-y_1|>2^{-k_1+2}$ and $|x_2-y_2|\leq2^{-k_2+2}$.  This verifies that $\theta_{\vec k}$ satisfies condition \eqref{size} for all $x,y\in\R^n$.  Now to verify \eqref{regx1}, recall that for $W\in (C_0^\infty(\R^n))'$, $f\in C_0^\infty(\R^n)$, and $x\in\R^n$, $F(x)=\<W,f^x\>$ is a differentiable function where $\partial_{x_i}F(x)=\<W,(\partial_{x_i}f)^x\>$.  Then $\theta_{\vec k}$ is differentiable, and we can estimate
\begin{align*}
|\nabla_{x_1}\theta_{\vec k}(x,y)|^2&=\sum_{j=1}^{n_1}\left|\<M_{\tilde b}TM_b(s_{k_1}^{b_1}(\cdot,y_1)\otimes s_{k_2}^{b_2}(\cdot,y_2)),\partial_{x_{1,j}}(d_{k_1}^{\tilde b_1}(x_1,\cdot))\otimes d_{k_2}^{\tilde b_2}(x_2,\cdot)\>\right|^2\\
&\less 2^{2k_1(n_1+1)}2^{2k_2n_2},
\end{align*}
since $2^{-k_1(n_1+1)}\partial_{x_{1,j}}(d_{k_1}^{\tilde b_1}(x_1,\cdot))$ is again a normalized bump for $x_1=(x_{1,1},...,x_{1,n_1})\in\R^{n_1}$ (up to a constant multiple independent of $x$, $y$, and $\vec k$).  Therefore
\begin{align*}
|\theta_{\vec k}(x,y)-\theta_{\vec k}(x_1',x_2,y)|&\leq||\nabla_{x_1}\theta_{\vec k}(x,y)||_{L^\infty}\;|x_1-x_1'|\less 2^{k_1n_1}2^{k_2n_2}(2^{k_1}|x_1-x_1'|).
\end{align*}
This proves that $\theta_{\vec k}$ verifies \eqref{regx1} via the equivalence in Remark \ref{r:kernelequiv}.  By the same argument, it follows that $\theta_{\vec k}$ verifies \eqref{regx2}-\eqref{regy2}.  Now by the continuity of $T$ from $bC_0^\delta(\R^n)$ into $(\tilde bC_0^\delta(\R^n))'$, we have that
\begin{align*}
\int_{\R^{n_1}}\theta_{\vec k}(x,y)\tilde b_1(x_1)dx_1&=\lim_{R\rightarrow\infty}\<M_{\tilde b}TM_b(s_{k_1}^{b_1}(\cdot,y_1)\otimes s_{k_2}^{b_2}(\cdot,y_2)),\lambda_{R,k_1}\otimes d_{k_2}^{\tilde b_2}(x_2,\cdot)\>
\end{align*}
where
\begin{align*}
\lambda_{R,k_1}(u_1)=\int_{|x_1|\leq R}d_{k_1}^{\tilde b_1}(x_1,u_1)\tilde b_1(x_1)dx_1.
\end{align*}
Note that for $|u_1|>R+2^{-k_1}$, we have $|u_1-x_1|\geq|u_1|-|x_1|>2^{-k_1}$ and hence $\lambda_{R,s_1}(u_1)=0$ for such $u_1$.  Also for $|u_1|<R-2^{-k_1}$ and $x\in\supp(d_{k_1}^{\tilde b_1}(\cdot,u_1))$, it follows that $|x_1|\leq|u_1|+|u_1-x_1|<R$.  Since $D_{k_1}^{\tilde b_1}\tilde b_1=0$, $\lambda_{R,s_1}(u_1)=0$ for $|u_1|<R-2^{-k_1}$.  That is $\supp(\lambda_{R,s_1})\subset B(0,R+2^{-k_1})\backslash B(0,R-2^{-k_1})$.  Now take $R>|y_1|+2^{-k_1+1}$ so that $\lambda_{R,k_1}$ and $s_{k_1}^{b_1}(\cdot,y_1)$ have disjoint support.  Now we split into two cases:  (1) where $|x_2-y_2|\leq2^{-k_1+2}$ and (2) where $|x_2-y_2|>2^{-k_2+2}$.\\

\noindent\underline{Case 1: ($|x_2-y_2|\leq2^{-k_1+2}$) }  Here we take $R>2^{-k_1+6}+2|y_1|$.  Consider 
\begin{align*}
\mathcal B=\{B(u_1,2^{-k_1}):u_1\in\supp(\lambda_{R,k_1})\},
\end{align*}
which is an open cover of $\supp(\lambda_{R,k_1})$.  Then by Vitali's covering lemma, there exists finite collection $\{B_1,...,B_J\}\subset\mathcal B$ of disjoint balls such that $\{3B_1,...,3B_J\}$ forms an open cover of $\supp(\lambda_{R,k_1})$.  Let $c_j\in \R^{n_1}$ be the center of $B_j$ for each $j=1,..,J$.  Fix $\chi\in C_0^\infty(\R^{n_1})$ such that $\chi=1$ on $B(0,1)$ and $\supp(\chi)\subset B(0,2)$.  Let $\widetilde\chi_j(u_1)=\chi\(\frac{u_1-c_j}{3\cdot2^{-k_1}}\)$, and it follows that $\widetilde\chi_j=1$ on $3B_j$ and $\widetilde\chi_j$ is supported inside $6B_j$.  Finally define the partition of unity for $3B_1\cup\cdots\cup3B_J$,
\begin{align*}
\chi_j(u_1)=\frac{\widetilde\chi_j(u_1)}{\sum_{k=1}^J\widetilde\chi_k(u_1)}\hspace{.5cm}\text{for }j=1,...,J.
\end{align*}
Let $m\in\N_0$ be the integer specified by the weak boundedness and mixed weak boundedness properties for $M_bTM_b$.  It follows that
\begin{align*}
\eta_j(u_1)=\frac{1}{\max_{|\alpha|\leq m}||\partial^\alpha(\lambda_{R,k_1}\chi_j)||_{L^\infty} }\chi_j(2^{-k_1+3}u_1+c_j)\lambda_{R,k_1}(2^{-k_1+3}u_1+c_j)
\end{align*}
is a normalized bump of order $m$ for each $j=1,...,J$.  Note that for each $\beta\in\N_0^{n_1}$ with $|\beta|\leq |\alpha|\leq m$
\begin{align*}
|\partial^\beta \lambda_{R,k_1}(u_1)|&\leq\int_{|x_1|\leq R}|\partial_{u_1}^\beta d_{k_1}^{\tilde b_1}(x_1,u_1)\tilde b_1(x_1)|dx_1\\
&\leq2^{k_1|\beta|}\int_{\R^{n_1}}|\partial_{u_1}^\beta d_{k_1}^{\tilde b_1}(x_1,u_1)\tilde b_1(x_1)|dx_1\less2^{k_1|\beta|}.
\end{align*}
The importance here is that this estimate does not depend on $R$; it does depend on $k_1$ and $\beta$, but since we are taking a limit in $R$ for a fixed $k_1$ and $|\beta|\leq m$, this is not of consequence.  Likewise for $|\beta|\leq|\alpha|\leq m$ and $u\in\supp(\lambda_{R,k_1})\cap 3B_j$
\begin{align*}
|\partial^\beta\chi_j(u)|=\left|\partial^\beta\[\frac{\widetilde\chi\(3\frac{u_1-c_j}{2^{-k_1}}\)}{\sum_{k=1}^J\widetilde\chi_k\(3\frac{u_1-c_j}{2^{-k_1}}\)}\]\right|&=3^{|\beta|}2^{|\beta|k_1}\left|\left|\partial^\beta\[\frac{\widetilde\chi}{\sum_{k=1}^J\widetilde\chi_k}\]\right|\right|_{L^\infty(B(0,1))}\leq A_{\beta}2^{|\beta|k_1},
\end{align*}
for some constant $A_{\beta}>0$ depending only on $\beta\in\N_0^{n_1}$.  Note that we use $\widetilde\chi_j\in C_0^\infty(\R^{n_1})$ and $\sum_{k=1}^J\widetilde\chi_k\geq1$ on $\supp(\lambda_{R,k_1})\cap 3B_j$.   Again the importance here is that this estimate does not depend on $R$; it does depend on $k_1$, $\beta$, and derivatives of $\chi$, but that is not a problem.  Also define $\phi(u_1)=2^{-k_1n_1}s_{k_1}^{b_1}(2^{-k_1+3}u_1+y_1,y_1)$, and it follows that $\phi$ is a normalized bump up to a constant multiple.  We now use that
\begin{align*}
&\sum_{j=1}^J\max_{|\alpha|\leq m}||\partial^\alpha(\lambda_{R,k_1}\chi_j)||_{L^\infty}\eta_j^{c_j,2^{-k_1+3}}(u_1)=\sum_{j=1}^J\chi_j(u_1)\lambda_{R,k_1}(u_1)=\lambda_{R,k_1}(u_1),\\
&\phi^{y_1,2^{-k_1+3}}(u_1)=2^{-k_1n_1}s_{k_1}^{b_1}\(2^{-k_1+3}\frac{u_1-c_j}{2^{-k_1+3}}+y_1,y_1\)=2^{-k_1n_1}s_{k_1}^{b_1}(u_1,y_1),
\end{align*}
and since $R>2^{-k_1+6}+2|y_1|$, it follows that 
\begin{align*}
|c_j-y_1|\geq|c_j|-|y_1|\geq R-2^{-k_1}-|y_1|>2^{-k_1+6}-2^{-k_1}\geq4\cdot2^{-k_1+3}.
\end{align*}
Then we can apply \eqref{WBP2} in the following way 
\begin{align*}
&\left|\<M_{\tilde b}TM_b(s_{k_1}^{b_1}(\cdot,y_1)\otimes s_{k_2}^{b_2}(\cdot,y_2)),\lambda_{R,k_1}\otimes d_{k_2}^{\tilde b_2}(x_2,\cdot)\>\right|\\
&\hspace{1.5cm}\leq\sum_{j=1}^J\max_{|\alpha|\leq m}||\partial^\alpha(\lambda_{R,k_1}\chi_j)||_{L^\infty}\left|\<T(\phi^{y_1,2^{-k_1+3}}\otimes s_{k_2}(\cdot,y_2)),\eta_j^{c_j,2^{-k_1+3}}\otimes d_{k_2}^{\tilde b_2}(x_2,\cdot)\>\right|\\
&\hspace{1.5cm}\leq\sum_{j=1}^JA_{k_1,m}\frac{2^{k_2n_2}2^{-k_1n_1}}{(2^{k_1}|y_1-c_j|)^{n_1}}\less\sum_{j=1}^JA_{k_1,m}\frac{2^{k_2n_2}2^{-2k_1n_1}}{R^{n_1}}=A_{k_1,m}\frac{2^{k_2n_2}2^{-2k_1n_1}}{R^{n_1}}J,\hspace{.5cm}\\
&\hspace{1.75cm}\text{ where}\hspace{.5cm}A_{k_1,m}=\max_{|\beta|+|\gamma|\leq m}2^{k_1(|\beta|+|\gamma|)}A_\gamma.
\end{align*}
Now we use that $B_1,...,B_J$ is a disjoint collection of open sets to estimate $J$:
\begin{align*}
J\less 2^{-k_1n_1}\sum_{j=1}^J|B_j|=2^{-k_1n_1}\left|\bigcup_{j=1}^JB_j\right|&\leq2^{-k_1n_1}|B(0,R+2^{-k_1+3})\backslash B(0,R-2^{-k_1+3})|\\
&\less 2^{-k_1(n_1+1)}R^{n_1-1}.
\end{align*}
Note that each $B_j\subset B(0,R+2^{-k_1+3})\backslash B(0,R-2^{-k_1+3})$ since $c_j\in\supp(\lambda_{R,k_1})\subset B(0,R+2^{-k_1+3})\backslash B(0,R-2^{-k_1+3})$ and each $B_j$ has radius $2^{-k_1}$.  Therefore
\begin{align*}
&\left|\<M_{\tilde b}TM_b(s_{k_1}(\cdot,y_1)\otimes s_{k_2}(\cdot,y_2)),\lambda_{R,k_1}\otimes d_{k_2}^{\tilde b_2}(x_2,\cdot)\>\right|\\
&\hspace{2.5cm}\less A_{k_1,m}\frac{2^{-k_1(2n_1+\gamma)}2^{k_2n_2}}{R^{n_1}}2^{-k_1(n_1+1)}R^{n_1-1}= A_{k_1,m}\frac{2^{-k_1(n_1-1)}2^{k_2n_2}}{R},
\end{align*}
which tends to zero as $R\rightarrow\infty$.  This completes the proof for the first case.\\

\noindent\underline{Case 2: ($|x_2-t_2|>2^{-k_2+2}$)}  Since $\lambda_{R,k_1}$ and $s_{k_1}(\cdot,y_1)$ have disjoint support, we can use the full kernel representation for $T$ to compute
\begin{align*}
&\left|\<M_{\tilde b}TM_b(s_{k_1}^{b_1}(\cdot,y_1)\otimes s_{k_2}^{b_2}(\cdot,y_2)),\lambda_{R,k_1}\otimes d_{k_2}^{\tilde b_2}(x_1,\cdot)\>\right|\\
&\hspace{.5cm}=\left|\iint_{\R^{2n}}K(u,v)s_{k_1}^{b_1}(v_1,y_1)s_{k_2}^{b_2}(v_2,y_2)\lambda_{R,k_1}(u_1)d_{k_2}^{\tilde b_2}(x_2,u_2)\tilde b(u)b(v)du\,dv\right|\\
&\hspace{.5cm}\less\iint_{\R^{2n}}\frac{1}{|u_1-v_1|^{n_1}|u_2-v_2|^{n_2}}|s_{k_1}^{b_1}(v_1,y_1)s_{k_2}^{b_2}(v_2,y_2)\lambda_{R,k_1}(u_1)d_{k_2}^{\tilde b_2}(x_2,u_2)|du\,dv\\
&\hspace{.5cm}\less\iint_{\R^{2n}}\frac{2^{k_2n_2}}{(|u_1|-|t_1|-|t_1-v_1|)^{n_1}}|s_{k_1}^{b_1}(v_1,y_1)s_{k_2}^{b_2}(v_2,y_2)\lambda_{R,k_1}(u_1)d_{k_2}^{\tilde b_2}(x_2,u_2)|du\,dv\\
&\hspace{.5cm}\less 2^{k_2n_2}R^{-n_1}\int_{\R^{n_1}}|\lambda_{R,s_1}(u_1)|du_1\less 2^{k_2n_2}2^{-k_1}R^{-1},
\end{align*}
which again tends to zero as $R\rightarrow\infty$.  Therefore $\theta_{\vec k}$ has integral zero in $x_1$, and a similar argument proves that it has integral zero in $x_2$ as well.
\end{proof}

By symmetry, it follows that each of the following define collections of biparameter Littlewood-Paley-Stein operators:
\begin{align*}
&\theta_{\vec k}^2(x,y)=\<M_{\tilde b}TM_b(s_{k_1}^{b_1}(\cdot,y_1)\otimes d_{k_2}^{b_2}(\cdot,y_2)),d_{k_1}^{\tilde b_1}(x_1,\cdot)\otimes s_{k_2}^{\tilde b_2}(x_2,\cdot)\>,\\
&\theta_{\vec k}^3(x,y)=\<M_{\tilde b}TM_b(d_{k_1}^{b_1}(\cdot,y_1)\otimes s_{k_2}^{b_2}(\cdot,y_2)),s_{k_1}^{\tilde b_1}(x_1,\cdot)\otimes d_{k_2}^{\tilde b_2}(x_2,\cdot)\>,\;\;\text{ and}\\
&\theta_{\vec k}^4(x,y)=\<M_{\tilde b}TM_b(d_{k_1}^{b_1}(\cdot,y_1)\otimes d_{k_2}^{b_2}(\cdot,y_2)),s_{k_1}^{\tilde b_1}(x_1,\cdot)\otimes s_{k_2}^{\tilde b_2}(x_2,\cdot)\>.
\end{align*}
Furthermore, these kernels satisfy
\begin{align*}
&\int_{\R^{n_1}}\theta_{\vec k}^2(x,y)\tilde b_1(x_1)dx_1=\int_{\R^{n_2}}\theta_{\vec k}^2(x,y)b_2(y_2)dy_2=0,\\
&\int_{\R^{n_1}}\theta_{\vec k}^2(x,y) b_1(y_1)dy_1=\int_{\R^{n_2}}\theta_{\vec k}^2(x,y)\tilde b_2(x_2)dx_2=0,\;\;\text{ and}\\
&\int_{\R^{n_1}}\theta_{\vec k}^2(x,y) b_1(y_1)dy_1=\int_{\R^{n_2}}\theta_{\vec k}^2(x,y)b_2(y_2)dy_2=0.
\end{align*}

\begin{definition}
A biparameter singular integral operator satisfies the biparameter $Tb=T^*\tilde b=0$ condition if the following two conditions hold:  (1)  Let $\psi_1\in C_0^\infty(\R^{n_1})$, $\psi_2,\varphi_2\in C_0^\infty(\R^{n_2})$, and $\eta_R\in C_0^\infty(\R^{n_1})$ such that $\eta_R=1$ on $B_1(0,R)\subset\R^{n_1}$ and $\supp(\eta_R)\subset B_1(0,2R)\subset\R^{n_1}$.  If $b_1\psi_1$ has mean zero and either $b_2\varphi_2$ or $b_2\psi_2$ has mean zero, then
\begin{align}
&\<T(b_1\otimes b_2\psi_2),\tilde b_1\psi_1\otimes \tilde b_2\varphi_2\>:=\lim_{R\rightarrow\infty}\<M_{\tilde b}TM_b(\eta_R\otimes \psi_2),\psi_1\otimes \varphi_2\>=0,\label{Tb1}\\
&\<T(b_1\psi_1\otimes b_2\psi_2),\tilde b_1\otimes \tilde b_2\varphi_2\>:=\lim_{R\rightarrow\infty}\<M_{\tilde b}TM_b(\psi_1\otimes \psi_2),\eta_R\otimes \varphi_2\>=0,\label{Tb2}
\end{align}
and (2) let $\psi_2\in C_0^\infty(\R^{n_2})$, $\psi_1,\varphi_1\in C_0^\infty(\R^{n_1})$, and $\eta_R\in C_0^\infty(\R^{n_2})$ such that $\eta_R=1$ on $B_2(0,R)\subset\R^{n_1}$ and $\supp(\eta_R)\subset B_2(0,2R)\subset\R^{n_2}$.  If $b_2\psi_2$ has mean zero and either $b_1\varphi_1$ or $b_1\psi_1$ has mean zero, then
\begin{align*}
&\<T(b_1\psi_1\otimes b_2), \tilde b_1\varphi_1\otimes \tilde b_2\psi_2\>:=\lim_{R\rightarrow\infty}\<M_{\tilde b}TM_b(\psi_1\otimes\eta_R),\varphi_1\otimes \psi_2\>=0,\\
&\<T( b_1\psi_1\otimes b_2\psi_2),\tilde  b_1\varphi_1\otimes\tilde  b_2\>:=\lim_{R\rightarrow\infty}\<M_{\tilde b}TM_b( \psi_1\otimes \psi_2), \varphi_1\otimes \eta_R\>=0.
\end{align*}
\end{definition}

Next we prove Theorem \ref{t:Tbtheorem}.

\begin{proof}
Let $S_{\vec k}^{b}=S_{k_1}^{b_1}\otimes S_{k_2}^{b_2}$ and $S_{\vec k}^{\tilde b}=S_{k_1}^{\tilde b_1} S_{k_2}^{\tilde b_2}$, where $S_{k_1}^{b_1}$, $S_{k_2}^{b_2}$, $S_{k_1}^{\tilde b_1}$, and $S_{k_2}^{\tilde b_2}$ be the approximations to identity with respect to $b_1$ and $b_2$ respectively constructed in \eqref{accapptoID}.  Also define $D_{k_1}^{b_1}=S_{k_1+1}^{b_1}-S_{k_1}^{b_1}$, $D_{k_2}^{b_2}=S_{k_2+1}^{b_2}-S_{k_2}^{b_2}$, $D_{k_1}^{\tilde b_1}=S_{k_1+1}^{\tilde b_1}-S_{k_1}^{\tilde b_1}$, $D_{k_2}^{\tilde b_2}=S_{k_2+1}^{\tilde b_2}-S_{k_2}^{\tilde b_2}$, $D_{\vec k}^b=D_{k_1}^{b_1} D_{k_2}^{b_2}$, and $D_{\vec k}^{\tilde b}=D_{k_1}^{\tilde b_1} D_{k_2}^{\tilde b_2}$.  It follows that $M_{b_j}S_{k_j}^{b_j}M_{b_j}f_j\rightarrow b_jf_j$ and $M_{ b_j}S_{-k_j}^{b_j}M_{b_j}f_j\rightarrow 0$ in $b_jC_0^\delta(\R^{n_j})$ as $k_j\rightarrow\infty$ for $j=1,2$, whenever $f_j\in C_0^{0,1}(\R^{n_j})$ and 
\begin{align*}
\int_{\R^{n_j}}f_j(x_j)b_j(x_j)dx_j=0.
\end{align*}
This was proved originally in \cite{DJS}, and the proof is also available in \cite{Hart2}.  It follows that $M_{b_j}S_{k_j}^{b_j}M_{b}f\rightarrow bf$ and $M_{ b_j}S_{-k_j}^{b_j}M_{b}f_j\rightarrow 0$ in $bC_0^\delta(\R^n)$ as $k_j\rightarrow\infty$ for $j=1,2$, whenever $f\in C_0^{0,1}(\R^n)$ and 
\begin{align*}
\int_{\R^{n_1}}f(x)b(x)dx_1=\int_{\R^{n_2}}f(x)b(x)dx_2=0.
\end{align*}
Let $f,g\in C_0^{0,1}(\R^n)$ such that
\begin{align*}
\int_{\R^{n_1}}f(x)b(x)dx_1=\int_{\R^{n_2}}f(x)b(x)dx_2=\int_{\R^{n_1}}g(x)\tilde b(x)dx_1=\int_{\R^{n_2}}g(x)\tilde b(x)dx_2=0.
\end{align*}
Then by the continuity of $T$ from $bC_0^\delta(\R^n)$ into $(\tilde bC_0^\delta(\R^n))'$,
\begin{align*}
&\<M_{\tilde b}TM_bf,g\>=\lim_{N_2\rightarrow\infty}\<M_{\tilde b_2}TM_{b_2}S_{N_2}^{b_2}M_{b}f,S_{N_2}^{\tilde b}M_bg\>-\<M_{\tilde b_2}TM_{b_2}S_{-N_2}^{b_2}M_{b}f,S_{-N_2}^{\tilde b_2}M_{\tilde b}g\>\\
&\hspace{.2cm}=\sum_{k_2\in\Z}\<M_{\tilde b_2}TM_{b_2}S_{k_2+1}^{b_2}M_{b}f,D_{k_2}^{\tilde b_2}M_{\tilde b}g\>-\<M_{\tilde b_2}TM_{b_2}D_{k_2}^{b_2}M_{b}f,S_{k_2}^{\tilde b_2}M_{\tilde b}g\>\\
&\hspace{.2cm}=\sum_{k_2\in\Z}\lim_{N_1\rightarrow\infty}\<M_{\tilde b}TM_bS_{k_2+1}^{b_2}S_{N_1}^{b_1}M_bf,D_{k_2}^{\tilde b_2}S_{N_1}^{\tilde b_1}M_{\tilde b}g\>+\<M_{\tilde b}TM_bD_{k_2}^{b_2}S_{N_1}^{b_1}M_bf,S_{k_2}^{\tilde b_2}S_{N_1}^{\tilde b_1}M_{\tilde b}g\>\\
&\hspace{1cm}-\<M_{\tilde b}TM_bS_{k_2+1}^{b_2}S_{-N_1}^{b_1}M_bf,D_{k_2}^{\tilde b_2}S_{-N_1}^{\tilde b_1}M_{\tilde b}g\>-\<M_{\tilde b}TM_bD_{k_2}^{b_2}S_{-N_1}^{b_1}M_bf,S_{k_2}^{\tilde b_2}S_{-N_1}^{\tilde b_1}M_{\tilde b}g\>
\end{align*}
\begin{align*}
&\hspace{.2cm}=\sum_{k_1,k_2\in\Z}\<M_{\tilde b}TM_bS_{k_2+1}^{b_2}S_{k_1+1}^{b_1}M_bf,D_{k_2}^{\tilde b_2}D_{k_1}^{\tilde b_1}M_{\tilde b}g\>+\<M_{\tilde b}TM_bD_{k_2}^{b_2}S_{k_1+1}^{b_1}M_bf,S_{k_2}^{\tilde b_2}D_{k_1}^{\tilde b_1}M_{\tilde b}g\>\\
&\hspace{1cm}+\<M_{\tilde b}TM_bS_{k_2+1}^{b_2}D_{k_1}^{\tilde b_1}M_bf,D_{k_2}^{\tilde b_2}S_{k_1}^{\tilde b_1}M_{\tilde b}g\>+\<M_{\tilde b}TM_bD_{k_2}^{b_2}D_{k_1}^{b_1}M_bf,S_{k_2}^{\tilde b_2}S_{k_1}^{\tilde b_1}M_{\tilde b}g\>\\
&\hspace{.2cm}=\sum_{k_1,k_2\in\Z}\sum_{j=1}^4\<\Theta_{\vec k}^jM_bf,M_{\tilde b}g\>
\end{align*}
where $\Theta_j$ for $j=1,2,3,4$ are defined as follows with their respective kernels
\begin{align*}
&\Theta_{\vec k}^{1}=D_{\vec k}^{\tilde b}M_{\tilde b}TM_bS_{\vec k+1};& &\hspace{0cm}\theta_{\vec k}^1(x,y)=\<M_{\tilde b}TM_b(s_{k_1+1}^{b_1}(\cdot,y_1)\otimes s_{k_2+1}^{b_2}(\cdot,y_2)),d_{k_1}^{\tilde b_1}(x_1,\cdot)\otimes d_{k_2}^{\tilde b_2}(x_2,\cdot)\>,&\\
&\Theta_{\vec k}^{2}=D_{k_1}^{\tilde b_1}S_{k_2}^{\tilde b_2}M_{\tilde b}TM_bS_{k_1+1}^{b_1}D_{k_2}^{b_2};& &\hspace{0cm}\theta_{\vec k}^2(x,y)=\<M_{\tilde b}TM_b(s_{k_1+1}^{b_1}(\cdot,y_1)\otimes d_{k_2}^{b_2}(\cdot,y_2)),d_{k_1}^{\tilde b_1}(x_1,\cdot)\otimes s_{k_2}^{\tilde b_2}(x_2,\cdot)\>,&\\
&\Theta_{\vec k}^{3}=S_{k_1}^{\tilde b_1}D_{k_2}^{\tilde b_2}M_{\tilde b}TM_bD_{k_1}^{b_1}S_{k_2+1}^{b_2};& &\hspace{0cm}\theta_{\vec s}^3(x,y)=\<M_{\tilde b}TM_b(d_{k_1}^{b_1}(\cdot,y_1)\otimes s_{k_2+1}^{b_2}(\cdot,y_2)),s_{k_1}^{\tilde b_1}(x_1,\cdot)\otimes d_{k_2}^{\tilde b_2}(x_2,\cdot)\>,&\\
&\Theta_{\vec k}^{4}=S_{\vec k}^{\tilde b}M_{\tilde b}TM_bD_{\vec k}^b;& &\hspace{0cm}\theta_{\vec s}^4(x,y)=\<M_{\tilde b}TM_b(d_{k_1}^{b_1}(\cdot,y_1)\otimes d_{k_2}^{b_2}(\cdot,y_2)),s_{k_1}^{\tilde b_1}(x_1,\cdot)\otimes s_{k_2}^{\tilde b_2}(x_2,\cdot)\>.&
\end{align*}
By Lemma \ref{l:kernelconditions}, $\theta_{\vec s}^1$ satisfies \eqref{size}-\eqref{regy2} and 
\begin{align*}
\int_{\R^{n_1}}\theta_{\vec k}^1(x,y)b_1(x_1)dx_1=\int_{\R^{n_2}}\theta_{\vec k}^1(x,y)b_2(x_2)dx_2=0.
\end{align*}
By the biparameter $Tb=T^*b=0$ assumption on $T$, 
we also have
\begin{align*}
\int_{\R^{n_1}}\theta_{\vec k}^1(x,y)b_1(y_1)dy_1=\int_{\R^{n_2}}\theta_{\vec k}^1(x,y)b_2(y_2)dy_2=0.
\end{align*}
Then by Theorem \eqref{p:dualpairbound},
\begin{align*}
\sum_{\vec k\in\Z^2}\left|\int_{\R^n}\Theta_{\vec k}^1f(x)g(x)dx\right|\less||f||_{L^p(\R^n)}||g||_{L^{p'}(\R^n)}.
\end{align*}
The same holds for $\Theta_{\vec s}^j$ when $j=2,3,4$, and so it follows that
\begin{align*}
|\<Tf,g\>|&\leq\sum_{j=1}^4\sum_{\vec k\in\Z^2}\left|\int_{\R^n}\Theta_{\vec k}^jf(x)g(x)dx\right|\less||f||_{L^p(\R^n)}||g||_{L^{p'}(\R^n)}.
\end{align*}
Therefore by density, $T$ can be extended to a bounded operator on $L^p$ for $1<p<\infty$.
\end{proof}

\section{Proof of Bounds for $\mathcal C_\Gamma$,$\widetilde{\mathcal C}_\Gamma$ and $\mathcal{C}^*_\Gamma$ }

In this section, we use Theorem \ref{t:Tbtheorem} 
to prove bounds for $\mathcal C_\Gamma$, 
its parameterized version $\widetilde{\mathcal 
C}_\Gamma$, which we define now, and the maximal operator $\mathcal{C}^*_\Gamma$. 

For appropriate 
$f:\R^n\rightarrow\C$, define
\begin{align*}
\widetilde{\mathcal C}_\Gamma 
M_bf(x)=\lim_{t_1,t_2\rightarrow0^+}\int_{\R^2}
\frac{\gamma_1(x_1)-\gamma_1(y_1)}{
(\gamma_1(x_1)-\gamma_1(y_1))^2+t_1^2}\frac{
\gamma_2(x_2)-\gamma_2(y_2)}{
(\gamma_2(x_2)-\gamma_2(y_2))^2+t_2^2}f(y)b(y)dy,
\end{align*}
where $b(y)=\gamma_1'(y_1)\gamma_2'(y_2)$.  We call this the parameterized version of $\mathcal C_\Gamma$ since 
\begin{align*}
\widetilde{\mathcal C}_\Gamma M_bf(x)=\mathcal C_\Gamma( f\circ\gamma^{-1})(\gamma(x)),
\end{align*}
and furthermore, the $L^p(\Gamma)$ bound for $\mathcal C_\Gamma$ can be reduced to $L^p(\R^2)$ bounds for $\widetilde{\mathcal C}_\Gamma$ via \eqref{equivnorm}.  It is not hard to see that the kernel of $\widetilde{\mathcal C}_\Gamma$ is 
\begin{align*}
\frac{1}{(\gamma_1(x_1)-\gamma_1(y_1))(\gamma_1(x_2)-\gamma_1(y_2))},
\end{align*}
which is a biparameter Calder\'on-Zygmund kernel.  In the next proposition, we prove that $\widetilde{\mathcal C}_\Gamma f$ is well-defined for appropriate $f:\R^n\rightarrow\C$ and hence $\mathcal C_\Gamma g$ is also well defined for appropriate $g:\Gamma\rightarrow\C$.  Define the complex $\log$ function with the negative real branch cut, that is for $z\in\C$ we define
\begin{align*}
\log(z)=\ln(|z|)+i\Arg(z),
\end{align*}
where $\ln:(0,\infty)\rightarrow\R$ logarithm base $e$ function with positive real domain and $\Arg(z)$ is the principle argument of $z$ taking values in $(-\pi,\pi]$.  Note that for $u\in(0,\infty)$, $\ln(u)=\log(u)$; we use this notation to emphasize when the input is real versus complex.

\begin{proposition}\label{p:integralrep}
Assume $\Gamma$ satisfies the hypotheses of Theorem \ref{t:ext}.  For all $f\in C_0^\infty(\R^2)$ and $x\in\R^2$, 
\begin{align*}
\widetilde C_\Gamma(bf)(x)&=\frac{1}{4\pi^2}\int_{\R^2}\log\((\gamma_1(x_1)-\gamma_1(y_1))^2\)\log\((\gamma_2(x_2)-\gamma_2(y_2))^2\)\partial_{y_1}\partial_{y_2}f(y)dy.
\end{align*}
Also, for all $f,g\in C_0^\infty(\R^2)$, the pairing $\<\widetilde C_\Gamma(bf),bg\>$ can be realized as any of the following absolutely convergent integrals:
\begin{align*}
&\frac{1}{4\pi^2}\int_{\R^4}\log\((\gamma_1(x_1)-\gamma_1(y_1))^2\)\log\((\gamma_2(x_2)-\gamma_2(y_2))^2\)\partial_{y_1}\partial_{y_2}f(y)g(x)b(x)dy\,dx,\\
&\frac{1}{4\pi^2}\int_{\R^4}\log\((\gamma_1(x_1)-\gamma_1(y_1))^2\)\log\((\gamma_2(x_2)-\gamma_2(y_2))^2\)f(y)\partial_{x_1}\partial_{x_2}g(x)b(y)dy\,dx,\\
&-\frac{1}{4\pi^2}\int_{\R^4}\log\((\gamma_1(x_1)-\gamma_1(y_1))^2\)\log\((\gamma_2(x_2)-\gamma_2(y_2))^2\)\partial_{y_1}f(y)\partial_{x_2}g(x)b(x_1,y_2)dy\,dx,\\
&-\frac{1}{4\pi^2}\int_{\R^4}\log\((\gamma_1(x_1)-\gamma_1(y_1))^2\)\log\((\gamma_2(x_2)-\gamma_2(y_2))^2\)\partial_{y_2}f(y)\partial_{x_1}g(x)b(y_1,x_2)dy\,dx.
\end{align*}
\end{proposition}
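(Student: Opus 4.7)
The plan is to exploit the fact that each factor of the truncated Cauchy kernel is, up to a sign, a logarithmic derivative in the appropriate variable, so that the full integrand becomes a mixed partial derivative, and then to integrate by parts.

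First I would work at the truncated level. For fixed $t_1,t_2>0$, a direct computation yields
\[
\frac{(\gamma_1(x_1)-\gamma_1(y_1))\gamma_1'(y_1)}{(\gamma_1(x_1)-\gamma_1(y_1))^2+t_1^2}=-\tfrac{1}{2}\partial_{y_1}\log\bigl((\gamma_1(x_1)-\gamma_1(y_1))^2+t_1^2\bigr),
\]
and analogously in $y_2$. Since the $y_1$-logarithm depends only on $y_1$ and the $y_2$-logarithm only on $y_2$, the truncated integrand against $f(y)b(y)$ equals $\tfrac{1}{4}\partial_{y_1}\partial_{y_2}[\log A_t(x_1,y_1)\log B_t(x_2,y_2)]f(y)$, where $A_t=(\gamma_1(x_1)-\gamma_1(y_1))^2+t_1^2$ and $B_t=(\gamma_2(x_2)-\gamma_2(y_2))^2+t_2^2$. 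The small Lipschitz constant hypothesis ensures $\operatorname{Re}[(\gamma_j(x_j)-\gamma_j(y_j))^2]\geq (1-\lambda_j^2)(x_j-y_j)^2>0$, so $A_t$ and $B_t$ stay in the open right half plane far from the branch cut of $\log$, making this factorization unambiguous.

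Next I would integrate by parts twice in $(y_1,y_2)$. Because $f\in C_0^\infty(\R^2)$ and the log factors grow only logarithmically, the boundary terms vanish, producing the truncated identity with the $\tfrac{1}{4\pi^2}$ normalization. Passing to the limit $t_1,t_2\to 0^+$ is then a dominated convergence argument: $\partial_{y_1}\partial_{y_2}f$ is bounded with compact support, $\log A_t\log B_t$ converges pointwise a.e.\ to $\log A\log B$, and on $\supp f$ the factor is controlled uniformly in $t$ by a constant multiple of $(1+\abs{\log\abs{x_1-y_1}})(1+\abs{\log\abs{x_2-y_2}})$, which is locally integrable. For the three equivalent pairing formulas I would apply the same logarithmic-derivative identities but distribute the derivatives differently: for instance $K(x,y)\gamma_1'(x_1)\gamma_2'(x_2)=\tfrac{1}{4}\partial_{x_1}\partial_{x_2}(\log A\log B)$, while $K(x,y)\gamma_1'(y_1)\gamma_2'(x_2)=-\tfrac{1}{4}\partial_{y_1}\partial_{x_2}(\log A\log B)$ (the sign flip coming from $\partial_{y_1}\log A=-(\gamma_1'(y_1)/\gamma_1'(x_1))\partial_{x_1}\log A$), and similarly for the remaining mixed assignment. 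Integrating by parts in the corresponding pair of variables transfers derivatives between the logarithmic kernel and $f$ or $g$, yielding each of the four listed representations; absolute convergence of these integrals follows from the same locally-integrable-log estimate together with the compact supports of $f,g$ and their derivatives.

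The main obstacle I expect is the justification of the limit $t\to 0^+$: the argument requires a uniform-in-$t$ integrable majorant for $\log A_t\log B_t\,\partial_{y_1}\partial_{y_2}f$, which is exactly what the right-half-plane positivity of $(\gamma_j(x_j)-\gamma_j(y_j))^2$ supplies, controlling $\abs{\log A_t}$ by $\abs{\log\abs{x_1-y_1}^2}+O(1)$ uniformly in $t_1$. Without this geometric input from the small Lipschitz constants, the argument of $\log A_t$ could approach the negative real axis, so both the pointwise limit and the majorant would fail.
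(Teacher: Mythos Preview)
Your proposal is correct and follows essentially the same route as the paper: the key identity $q_{t_j}(\gamma_j(x_j)-\gamma_j(y_j))\gamma_j'(y_j)=-\tfrac{1}{2\pi}\partial_{y_j}\log\bigl((\gamma_j(x_j)-\gamma_j(y_j))^2+t_j^2\bigr)$, integration by parts against the compactly supported $f$, and dominated convergence using the uniform majorant $(1+|\ln|x_j-y_j||)$ coming from $\operatorname{Re}[(\gamma_j(x_j)-\gamma_j(y_j))^2]\geq(1-\lambda_j^2)(x_j-y_j)^2$. The alternate pairing formulas are obtained in the paper exactly as you suggest, by choosing to integrate by parts in $x_j$ rather than $y_j$ before passing to the limit.
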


\begin{proof}
We first note that for $x_j,y_j\in\R$
\begin{align}
q_{ t_j}(\gamma_j(x_j)-\gamma_j(y_j))\gamma_j'(y_j)&=\frac{1}{\pi}\frac{\gamma_j(x_j)-\gamma_j(y_j)}{(\gamma_j(x_j)-\gamma_j(y_j))^2+ t_j^2}\gamma_j'(y_j)\notag\\
&=-\frac{1}{2\pi}\partial_{y_j}\log\((\gamma_j(x_j)-\gamma_j(y_j))^2+ t_j^2\).\label{potentialformula}
\end{align}
The derivative of $\log$ is well defined here since we defined it with the negative real branch cut, and for all $x_j,y_j\in\R$, we have $Re\((\gamma_j(x_j)-\gamma_j(y_j))^2+ t_j^2\)\geq t_j^2>0$.  Now for $f\in C_0^\infty(\R^2)$ and $x\in\R^2$, we compute the following pointwise limit
\begin{align*}
\widetilde C_\Gamma(bf)(x)&=\lim_{ t_1, t_2\rightarrow0^+}\int_{\R^2}q_{ t_1}(\gamma_1(x_1)-\gamma_1(y_1))q_{ t_2}(\gamma_2(x_2)-\gamma_2(y_2))f(y)\gamma_1'(y_1)\gamma_2'(y_2)dy\\
&=\lim_{ t_1, t_2\rightarrow0^+}\int_{\R^2}\[-\frac{1}{2\pi}\partial_{y_1}\log\((\gamma_1(x_1)-\gamma_1(y_1))^2+ t_1^2\)\]\[-\frac{1}{2\pi}\partial_{y_2}\log\((\gamma_2(x_2)-\gamma_2(y_2))^2+ t_2^2\)\]f(y)dy\\
&=\lim_{ t_1, t_2\rightarrow0^+}\frac{1}{4\pi^2}\int_{\R^2}\[\log\((\gamma_1(x_1)-\gamma_1(y_1))^2+ t_1^2\)\]\[\log\((\gamma_2(x_2)-\gamma_2(y_2))^2+ t_2^2\)\]\partial_{y_1}\partial_{y_2}f(y)dy\\
&=\frac{1}{4\pi^2}\int_{\R^2}\log\((\gamma_1(x_1)-\gamma_1(y_1))^2\)\log\((\gamma_2(x_2)-\gamma_2(y_2))^2\)\partial_{y_1}\partial_{y_2}f(y)dy.
\end{align*}
We integrate by parts in $y_1$ and $y_2$ above, and the boundary terms vanish since $f$ is compactly supported.  Also to justify the last inequality, note the following holds for all $x_j\neq y_j$, so that we can apply dominated convergence:  the following pointwise limit exists
\begin{align*}
\lim_{t_1,t_2\rightarrow0^+}\log\(\gamma_j(x_j)-\gamma_j(y_j))^2+ t_j^2\)\partial_{y_1}\partial_{y_2}f(y)=\log\(\gamma_j(x_j)-\gamma_j(y_j))^2\)\partial_{y_1}\partial_{y_2}f(y),
\end{align*}
and the integrand is dominated by an integrable function function independent of $t_1,t_2<1/4$
\begin{align*}
|\log\((\gamma_j(x_j)-\gamma_j(y_j))^2+ t_j^2\)|&\leq|\ln\(|(\gamma_j(x_j)-\gamma_j(y_j))^2+ t_j^2|\)|+\pi\less|\ln\((x_j-y_j)^2\)|+1.
\end{align*}
Since $\ln(|\cdot|)$ is locally integrable and $f\in C_0^\infty(\R^2)$, we may apply dominated convergence in the last line above.  Now take $f,g\in C_0^\infty(\R^2)$, and it immediately follows that
\begin{align*}
\<M_b\widetilde C_\Gamma M_bf,g\>&=\frac{1}{4\pi^2}\int_{\R^4}\log\((\gamma_1(x_1)-\gamma_1(y_1))^2\)\log\((\gamma_2(x_2)-\gamma_2(y_2))^2\)\\
&\hspace{5cm}\times\partial_{y_1}\partial_{y_2}f(y)g(x)\gamma_1'(x_1)\gamma_2'(x_2)dy\,dx.
\end{align*}
We also have that
\begin{align*}
\<M_b\widetilde C_\Gamma M_bf,g\>&=\lim_{ t_1, t_2\rightarrow0^+}\frac{1}{4\pi^2}\int_{\R^4}\log\((\gamma_1(x_1)-\gamma_1(y_1))^2+ t_1^2\)\log\((\gamma_2(x_2)-\gamma_2(y_2))^2+ t_2^2\)\\
&\hspace{3.2cm}\times\partial_{y_1}\partial_{y_2}f(y)g(x)\gamma_1'(x_1)\gamma_2'(x_2)dy\,dx\\
&\hspace{0cm}=\lim_{ t_1, t_2\rightarrow0^+}\int_{\R^4}q_{ t_1}(\gamma_1(x_1)-\gamma_1(y_1))q_{ t_2}(\gamma_2(x_2)-\gamma_2(y_2)) f(y)g(x)\gamma_1'(y_1)\gamma_2'(y_2)\gamma_1'(x_1)\gamma_2'(x_2)dy\,dx\\
&\hspace{0cm}=\lim_{ t_1, t_2\rightarrow0^+}\frac{1}{4\pi^2}\int_{\R^4}\[\partial_{x_1}\log\((\gamma_1(x_1)-\gamma_1(y_1))^2+ t_1^2\)\]\\
&\hspace{3.2cm}\times\[-\partial_{y_2}\log\((\gamma_2(x_2)-\gamma_2(y_2))^2+ t_2^2\)\] f(y)g(x)\gamma_1'(y_1)\gamma_2'(x_2)dy\,dx\\
&\hspace{0cm}=\lim_{ t_1, t_2\rightarrow0^+}-\frac{1}{4\pi^2}\int_{\R^4}\log\((\gamma_1(x_1)-\gamma_1(y_1))^2+ t_1^2\)\\
&\hspace{3.2cm}\times\log\((\gamma_2(x_2)-\gamma_2(y_2))^2+ t_2^2\)\partial_{y_2} f(y)\partial_{x_1}g(x)\gamma_1'(y_1)\gamma_2'(x_2)dy\,dx\\
&\hspace{0cm}=-\frac{1}{4\pi^2}\int_{\R^4}\log\((\gamma_1(x_1)-\gamma_1(y_1))^2\)\log\((\gamma_2(x_2)-\gamma_2(y_2))^2\)\partial_{y_2} f(y)\partial_{x_1}g(x)\gamma_1'(y_1)\gamma_2'(x_2)dy\,dx.
\end{align*}
Here we integrate by parts in $x_1$ and $y_2$ and use dominated convergence in essentially the same way as above.  A similar argument verifies the other formulas for $\<\widetilde C_\Gamma(bf),bg\>$.
\end{proof}

Note that we cannot use properties of logs to replace the integrand above by
\begin{align*}
4\,\log\(\gamma_1(x_1)-\gamma_1(y_1)\)\,\log\(\gamma_2(x_2)-\gamma_2(y_2)\).
\end{align*}
This is because $Re\[(\gamma_j(x_j)-\gamma(y_j))^2\]>0$ for $x_j\neq y_j$, and furthermore recall that we showed that $Re\[(\gamma_j(x_j)-\gamma(y_j))^2\]\geq (1-\lambda_j^2)(x_j-y_j)^2$.  So this term avoids the branch cut of $\log$, but $Re\[\gamma_j(x_j)-\gamma(y_j)\]$ may change sign, which causes problems with the complex $\log$ function.

\begin{lemma}\label{l:BMOest}
Suppose $L_j:\R\rightarrow\R$ is a Lipschitz function with small Lipschitz constant $\lambda_j<1$ for $j=1,2$, and define $\gamma(x)=(\gamma_1(x_1),\gamma_2(x_2))=(x_1+iL_1(x_1),x_2+iL_2(x_2))$.  If $\psi\in C_0^\infty(\R)$ is a normalized bump of any order with mean zero, then
\begin{align*}
\sup_{u_j\in\R,R_j>0}\left|\int_\R\log\((\gamma_j(x_j)-\gamma_j(y_j))^2\)R_j^{-1}\psi\(\frac{u_j-y_j}{R_j}\)dy_j\right|\less1,
\end{align*}
where the suppressed constant does not depend on $\psi$, $x_j$, or $\gamma$.  In other words, $\log((\gamma_j(x_j)-\gamma_j(\cdot))^2)\in BMO(\R)$ with norm independent of $x_j$, and $\gamma$.  In particular this holds when $\psi(u_j)=\varphi'(u_j)$ for some some normalized bump $\varphi\in C_0^\infty(\R)$ of order at least $1$.
\end{lemma}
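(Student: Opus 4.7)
The plan is to decompose the integrand so that the complex-valued logarithm splits cleanly into a classical BMO piece plus an $L^\infty$ remainder. Write
\begin{align*}
(\gamma_j(x_j)-\gamma_j(y_j))^2 = (x_j-y_j)^2\,A_j(x_j,y_j),\qquad A_j(x_j,y_j) = \left(1 + i\,\frac{L_j(x_j)-L_j(y_j)}{x_j-y_j}\right)^2.
\end{align*}
The small Lipschitz hypothesis $\lambda_j<1$ forces $Re(A_j) \geq 1-\lambda_j^2 > 0$ and $1-\lambda_j^2 \leq |A_j| \leq 1+\lambda_j^2$, so $A_j$ lies in a compact subset of the right half plane. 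Since $(x_j-y_j)^2>0$ and $\arg A_j\in(-\pi/2,\pi/2)$, the product never hits the negative real branch cut, and $\log$ is additive:
\begin{align*}
\log\bigl((\gamma_j(x_j)-\gamma_j(y_j))^2\bigr) = \ln\bigl((x_j-y_j)^2\bigr) + \log A_j(x_j,y_j),
\end{align*}
where $\|\log A_j\|_{L^\infty} \leq C(\lambda_j)$.

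The $\log A_j$ contribution is then estimated trivially: since $\psi$ is a normalized bump, $\|R_j^{-1}\psi((u_j-\cdot)/R_j)\|_{L^1}\less 1$, so this piece contributes $O(1)$ uniformly (the mean-zero hypothesis is not even needed here). It remains to bound
\begin{align*}
I := \int_\R \ln\bigl((x_j-y_j)^2\bigr)\,R_j^{-1}\psi\!\left(\tfrac{u_j-y_j}{R_j}\right)dy_j.
\end{align*}
Substituting $y_j = u_j - R_j z$ and using $\int\psi=0$ to absorb any additive constant, we get $I = 2\int \ln|z - a|\,\psi(z)\,dz$ with $a=(u_j-x_j)/R_j$.

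For $|a|\leq 2$, the bound $|I|\less \int_{|z|\leq 1}|\ln|z-a||\,dz \less 1$ follows uniformly in $a$ from the local integrability of $\ln$ on a bounded set. For $|a|>2$, mean-zero of $\psi$ gives $I = 2\int (\ln|z-a|-\ln|a|)\psi(z)\,dz$; for $|z|\leq 1<|a|/2$ the mean value theorem yields $|\ln|z-a|-\ln|a||\leq |z|/(|a|-|z|)\leq 2/|a|$, so $|I|\less 1/|a|\less 1$. Combining the cases, $|I|\less 1$ with a constant independent of $x_j$, $u_j$, $R_j$, $\psi$, or $\gamma$, as required.

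The only real obstacle is the potential for $2\pi i$ corrections when splitting the logarithm of a product; this is precisely why the hypothesis $\lambda_j<1$ is invoked, since it pins $A_j$ into the right half plane. Everything else is a standard BMO-type computation; the last assertion about $\psi=\varphi'$ is automatic because if $\varphi$ is a normalized bump of order $\geq 1$, then $\varphi'$ has mean zero and is a normalized bump (up to a harmless constant) of order one less.
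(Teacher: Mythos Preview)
Your proof is correct. The paper's argument also splits into the near case $|u_j-x_j|\leq 2R_j$ and the far case $|u_j-x_j|>2R_j$, but it keeps the complex quantity $(\gamma_j(x_j)-\gamma_j(y_j))^2$ intact throughout, subtracting the constant $\log(R_j^2)$ in the near case and $\log((\gamma_j(x_j)-\gamma_j(u_j))^2)$ in the far case, and then controlling the modulus via the comparability $(1-\lambda_j^2)|x_j-y_j|^2\leq |\gamma_j(x_j)-\gamma_j(y_j)|^2\leq (1+\lambda_j)^2|x_j-y_j|^2$ together with the trivial bound $|\Arg|\leq\pi$. Your route is organizationally cleaner: by factoring out $A_j$ and observing once and for all that $\log A_j\in L^\infty$ (which is exactly where $\lambda_j<1$ is used, to avoid a $2\pi i$ jump), you reduce immediately to the classical real-variable fact that $\ln|\cdot|\in BMO(\R)$, and the two-case computation you give is then the standard one. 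The paper's version has the small advantage of never needing to justify the additivity of the complex log, since it works with differences of logs rather than a product decomposition; your version has the advantage of isolating the geometric input into a single bounded term up front.
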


\begin{proof}
Let $\psi\in C_0^\infty(\R)$ be a normalized bump with integral zero.  For $|u_j-x_j|\leq 2R_j$
\begin{align*}
&\left|\int_\R\log\((\gamma_j(x_j)-\gamma_j(y_j))^2\)R_j^{-1}\psi\(\frac{u_j-y_j}{R_j}\)dy_j\right|\\
&\hspace{1cm}\leq\frac{||\psi||_{L^\infty}}{R_j}\int_{u_j-x_j-R_j}^{u_j-x_j+R_j}\left|\log\((\gamma_j(x_j)-\gamma_j(x_j+y_j))^2\)-\log(R_j^2)\right|dy_j\\
&\hspace{1cm}\leq \int_{-3}^{3}\(\ln\(\frac{|(\gamma_j(x_j)-\gamma_j(x_j+R_jy_j))^2|}{R_j^2}\)+\pi\)dy_j\\
&\hspace{1cm}\less \int_{-3}^{3}(1+|\ln(|y_j|)|)dy_j\less1.
\end{align*}
Here we use that for $|y_j|\leq 3$
\begin{align*}
(1-\lambda_j^2)|y_j|^2\leq\frac{|(\gamma_j(x_j)-\gamma_j(x_j+R_jy_j))^2|}{R_j^2}\leq(1+\lambda_j)^2|y_j|^2\leq4|y_j|^2\leq36.
\end{align*}
Now for $|u_j-x_j|>2R_j$, we estimate as follows
\begin{align*}
&\left|\int_\R\log\((\gamma_j(x_j)-\gamma_j(y_j))^2\)R_j^{-1}\psi\(\frac{u_j-y_j}{R_j}\)dy_j\right|\\
&\hspace{.5cm}\leq\frac{||\psi||_{L^\infty}}{R_j}\int_{u_j-x_j-R_j}^{u_j-x_j+R_j}\left|\log\((\gamma_j(x_j)-\gamma_j(x_j+y_j))^2\)-\log\((\gamma_j(x_j)-\gamma_j(u_j))^2\)\right|dy_j\\
&\hspace{.5cm}\less1+\frac{1}{R_j}\int_{u_j-x_j-R_j}^{u_j-x_j+R_j}\left|\ln\(\frac{|\gamma_j(x_j)-\gamma_j(x_j+y_j)|^2}{|\gamma_j(x_j)-\gamma_j(u_j)|^2}\)\right|dy_j\\
&\hspace{.5cm}\less1+\frac{1}{R_j}\int_{|y_j-(u_j-x_j)|<R_j}\left|\ln\(\frac{|y_j|}{|u_j-x_j|}\)\right|dy_j\\
&\hspace{.5cm}\leq1+\frac{1}{R_j}\int_{|y_j-(u_j-x_j)|<R_j}\left|\ln\(\frac{|u_j-x_j|+|y_j-(u_j-x_j)|}{|u_j-x_j|}\)\right|dy_j\\
&\hspace{4cm}+\frac{1}{R_j}\int_{|y_j-(u_j-x_j)|<R_j}\left|\ln\(\frac{|u_j-x_j|}{|u_j-x_j|-|y_j-(u_j-x_j)|}\)\right|dy_j\\
&\hspace{.5cm}\leq1+\frac{1}{R_j}\int_{|y_j-(u_j-x_j)|<R_j}(\ln(3/2)+\ln(2))dy_j\less1.
\end{align*}
This completes the proof.
\end{proof}

Now we prove that $\widetilde{\mathcal C}_\Gamma$ satisfies the hypotheses of Theorem \ref{t:Tbtheorem}.

\begin{proposition}\label{p:WBP}
Assume $\Gamma$ satisfies the hypotheses of Theorem \ref{t:ext}.  The operator $M_b\widetilde{\mathcal C}_\Gamma M_b$ satisfies the weak boundedness and mixed weak boundedness properties, where $b(x)=\gamma_1'(x_1)\gamma_2'(x_2)$ for $x=(x_1,x_2)\in\R^2$.
\end{proposition}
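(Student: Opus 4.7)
The plan is to exploit the integral representations of Proposition \ref{p:integralrep}, which rewrite the pairing $\langle M_b\widetilde{\mathcal C}_\Gamma M_bf, g\rangle$ as an iterated integral against the product $\log((\gamma_1(u_1)-\gamma_1(v_1))^2)\log((\gamma_2(u_2)-\gamma_2(v_2))^2)$, with one derivative distributed into each parameter direction in one of four possible ways. Combined with the uniform BMO bound of Lemma \ref{l:BMOest} and the observation that derivatives of normalized bumps automatically have integral zero, this reduces each WBP estimate to a product of one-dimensional integrals that can be bounded independently of the outer variables and of $\gamma$.

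For \eqref{WBP1}, I would use representation 1, which places a derivative on each $\varphi_j^{x_j,R_j}$. Since $\partial_{v_j}\varphi_j^{x_j,R_j}(v_j) = R_j^{-1}(\partial\varphi_j)((v_j-x_j)/R_j)$ with $\partial\varphi_j$ a normalized bump of mean zero, the ``in particular'' clause of Lemma \ref{l:BMOest} gives $|\int\log((\gamma_j(u_j)-\gamma_j(v_j))^2)\partial_{v_j}\varphi_j^{x_j,R_j}(v_j)\,dv_j| \lesssim 1$, uniformly in $u_j$. The outer $u_j$-integrals contribute $\int|\psi_j^{x_j,R_j}|\,du_j \lesssim R_j$ and $||b||_{L^\infty}\lesssim 1$, producing the required $O(R_1R_2)$ bound.

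For the mixed cases \eqref{WBP2} and \eqref{WBP4} (no mean-zero assumption, separated supports in one index), I would still use representation 1, but exploit that $\partial_{v_1}\varphi_1^{y_1,R_1}$ integrates to zero to replace $\log((\gamma_1(u_1)-\gamma_1(v_1))^2)$ by its difference from $\log((\gamma_1(u_1)-\gamma_1(y_1))^2)$ before integrating in $v_1$. A straight-line homotopy between these two arguments stays in the right half-plane (both points have positive real part by $\mathrm{Re}[(\gamma_1(a)-\gamma_1(b))^2] \geq (1-\lambda_1^2)(a-b)^2$, which keeps them off the branch cut), so differentiating the branch of $\log$ along the homotopy yields $|\log((\gamma_1(u_1)-\gamma_1(v_1))^2)-\log((\gamma_1(u_1)-\gamma_1(y_1))^2)| \lesssim |v_1-y_1|/|u_1-y_1| \lesssim R_1/|x_1-y_1|$. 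The $v_1$-integral is thereby $O(R_1/|x_1-y_1|)$, the $v_2$-integral is $O(1)$ by Lemma \ref{l:BMOest}, and the outer factors give $R_1R_2$, matching \eqref{WBP2}. Case \eqref{WBP4} is the analogous argument with the two parameter directions interchanged.

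For \eqref{WBP3} and \eqref{WBP5}, which ask for one extra factor $(R_j^{-1}|x_j-y_j|)^{-\gamma}$ under a mean-zero hypothesis, I would iterate the cancellation argument. Suppose $\gamma_1'\varphi_1^{y_1,R_1}$ has mean zero. Reversing the $v_1$ integration by parts gives $\int 2\gamma_1'(v_1)\varphi_1^{y_1,R_1}(v_1)/(\gamma_1(u_1)-\gamma_1(v_1))\,dv_1$; then use the mean-zero hypothesis to subtract the value at $v_1 = y_1$ of the non-cancelling factor, whose Lipschitz estimate yields $|1/(\gamma_1(u_1)-\gamma_1(v_1))-1/(\gamma_1(u_1)-\gamma_1(y_1))| \lesssim |v_1-y_1|/|u_1-y_1|^2 \lesssim R_1/|x_1-y_1|^2$. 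The $v_1$-integral thus improves to $O(R_1^2/|x_1-y_1|^2)$, delivering \eqref{WBP3} with $\gamma = 1$. If instead $\gamma_1'\psi_1^{x_1,R_1}$ has mean zero, switch to representation 2 and run the analogous argument in the $u_1$-integral. The index-2 mean-zero hypothesis is not needed, since all the extra decay comes from the separated first index. The main technical obstacle is the careful bookkeeping of the complex logarithm in these homotopy and Lipschitz estimates; the small-Lipschitz hypothesis is used in an essential way to keep the argument of $\log$ away from the principal branch cut.
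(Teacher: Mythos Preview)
Your proposal is correct and follows the paper's approach closely: both proofs rely on the integral representations of Proposition \ref{p:integralrep} together with Lemma \ref{l:BMOest}, and for \eqref{WBP3} both reduce to the Lipschitz estimate for the difference of Cauchy kernels $1/(\gamma_1(u_1)-\gamma_1(\cdot))$, yielding the extra factor $R_1/|x_1-y_1|$.

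The one genuine difference is in \eqref{WBP2}. The paper simply reverts to the truncated kernel $q_{t_1}$ and uses the crude bound $|q_{t_1}(\gamma_1(u_1)-\gamma_1(v_1))|\lesssim|u_1-v_1|^{-1}$ on the separated supports, which immediately gives $R_1^2/|x_1-y_1|$. You instead stay in the logarithmic representation, exploit the automatic mean-zero of $\partial_{v_1}\varphi_1^{y_1,R_1}$, and bound the difference $\log\bigl((\gamma_1(u_1)-\gamma_1(v_1))^2\bigr)-\log\bigl((\gamma_1(u_1)-\gamma_1(y_1))^2\bigr)$ via a straight-line homotopy in the right half-plane. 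Your argument is correct (the convex combination of two points with positive real part has positive real part, so the branch cut is avoided), but it is more delicate than necessary: the paper's one-line kernel bound achieves the same end. Conversely, your systematic use of the four representations in Proposition \ref{p:integralrep} to route the mean-zero hypothesis to whichever factor carries it is slightly cleaner than the paper's ``without loss of generality'' reduction. Both approaches are equivalent in strength and give $\gamma=1$ in \eqref{WBP3} and \eqref{WBP5}.
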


\begin{proof}
Let $\varphi_j,\psi_j\in C_0^\infty$ be normalized bumps, $x\in\R^2$, and $R_1,R_2>0$.  Then
\begin{align*}
&\left|\<M_b\widetilde{\mathcal C}_\Gamma M_b(\varphi_1^{x_1,R_1}\otimes\varphi_2^{x_2,R_2}),\psi_1^{x_1,R_1}\otimes\psi_2^{x_2,R_2}\>\right|\\
&\hspace{1cm}=\frac{1}{4\pi^2}\left|\int_{\R^4}\log\((\gamma_1(u_1)-\gamma_1(v_1))^2\)\log\((\gamma_2(u_2)-\gamma_2(v_2))^2\)\right.\\
&\left.\phantom{\int}\hspace{4cm}\times(\varphi_1^{x_1,R_1})'(v_1)(\varphi_2^{x_2,R_2})'(v_2)\psi_1^{x_1,R_1}(u_1)\psi_2^{x_2,R_2}(u_2)du\,dv\right|\\
&\hspace{1cm}\leq\frac{1}{4\pi^2}\int_{x_1-R_1}^{x_1+R_1}\int_{x_2-R_2}^{x_2+R_2}\left|\int_{\R^2}\log\((\gamma_1(u_1)-\gamma_1(v_1))^2\)\log\((\gamma_2(u_2)-\gamma_2(v_2))^2\)\right.\\
&\left.\phantom{\int}\hspace{4cm}\times R_1^{-1}(\varphi_1')^{x_1,R_1}(v_1)R_2^{-1}(\varphi_2')^{x_2,R_2}(v_2)dv\right|du\less R_1\, R_2.
\end{align*}
The last inequality holds due to Lemma \ref{l:BMOest}.  Then $\widetilde{\mathcal C}_\Gamma$ satisfies the weak boundedness property.  Now we verify the mixed weak boundedness properties for $\widetilde{\mathcal C}_\Gamma$: we first verify \eqref{WBP2}.  Let $x_1\in\R$, $R_1>0$, and $\varphi_j,\psi_j\in C_0^\infty(\R)$ be normalized bumps.  Then for $x_1,x_2,y_2\in\R$ and $R_1,R_2>0$ such that $|x_1-y_1|>4R_1$
\begin{align*}
&\left|\<M_b\widetilde{\mathcal C}_\Gamma M_b(\varphi_1^{y_1,R_1}\otimes \varphi_2^{x_2,R_2}),\psi_1^{x_1,R_1}\otimes \psi_2^{x_2,R_2}\>\right|\\
&\hspace{1cm}=\lim_{ t_1, t_2\rightarrow0^+}\left|\int_{\R^2}q_{ t_1}(\gamma_1(u_1)-\gamma_1(v_1))\varphi_1^{y_1,R_1}(v_1)\psi_1^{x_1,R_1}(u_1)\gamma_1'(v_1)\gamma_1'(u_1)dv_1\,du_1\right|\\
&\hspace{3.5cm}\times\left|\int_{\R^2}q_{ t_2}(\gamma_2(u_2)-\gamma_2(v_2))\varphi_2^{y_2,R_2}(v_2)\psi_2^{x_2,R_2}(u_2)\gamma_2'(v_2)\gamma_2'(u_2)dv_2\,du_2\right|\\
&\hspace{1cm}\leq\lim_{ t_1, t_2\rightarrow0^+}\int_{\R^2}|q_{ t_1}(\gamma_1(u_1)-\gamma_1(v_1))|\,|\varphi_1^{y_1,R_1}(v_1)\psi_1^{x_1,R_1}(u_1)\gamma_1'(v_1)\gamma_1'(u_1)|dv_1\,du_1\\
&\hspace{3.5cm}\times\left|\int_{\R^2}\log\((\gamma_2(u_2)-\gamma_2(v_2))^2\)(\varphi_2^{
y_2,R_2})'(v_2)\psi_2^{x_2,R_2}(u_2)\gamma_2'(u_2)dv_2\,du_2\right|\\
&\hspace{1cm}=\lim_{ t_1, t_2\rightarrow0^+}A_{ t_1}\times B_{ t_2}.
\end{align*}
To estimate $A_{t_1}$, we use the kernel estimate for $q_{t_1}$ to conclude the following bound.
\begin{align*}
\int_{\R^2}|q_{ t_1}(\gamma_1(u_1)-\gamma_1(v_1))|\,|\varphi_1^{y_1,R_1}(v_1)\psi_1^{x_1,R_1}(u_1)\gamma_1'(v_1)\gamma_1'(u_1)|dv_1\,du_1&\less\int_{\R^2}\frac{1}{|u_1-v_1|}|\varphi_1^{y_1,R_1}(v_1)\psi_1^{x_1,R_1}(u_1)|dv_1\,du_1\\
&\less\frac{R_1^2}{|x_1-y_1|}=\frac{R_1}{(R_1^{-1}|x_1-y_1|)}.
\end{align*}
For the second term, we argue exactly as in the full weak boundedness case using Lemma \ref{l:BMOest}:
\begin{align*}
B_{ t_2}&\less\int_{\R}\left|\int_{\R}\log\((\gamma_2(u_2)-\gamma_2(v_2))^2\)R_2^{-1}(\varphi_2')^{y_2,R_2}(v_2)dv_2\right||\psi_2^{x_2,R_2}(u_2)|du_2\less \int_{\R}|\psi_2^{x_2,R_2}(u_2)|du_2\less R_2.
\end{align*}
Therefore $\widetilde C_\Gamma$ satisfies \eqref{WBP2}.  To prove \eqref{WBP3}, fix $x_1,x_2,y_2\in\R$, $R_1,R_2>0$, and $\varphi_j,\psi_j$ for $j=1,2$ as above, but furthermore assume (without loss of generality) that $\gamma_1'\psi^{x_1, R_1}_1$ has mean zero.  Since $|x_1-y_1|>4R_1$
\begin{align*}
&\left|\<M_b\widetilde{\mathcal C}_\Gamma M_b(\varphi_1^{y_1,R_1}\otimes \varphi_2^{x_2,R_2}),\psi_1^{x_1,R_1}\otimes \psi_2^{x_2,R_2}\>\right|\\
&\hspace{1cm}\leq\lim_{ t_1, t_2\rightarrow0^+}\int_{\R^2}|q_{ t_1}(\gamma_1(u_1)-\gamma_1(v_1))-q_{ t_1}(\gamma_1(x_1)-\gamma_1(v_1))|\,|\varphi_1^{y_1,R_1}(v_1)\psi_1^{x_1,R_1}(u_1)\gamma_1'(v_1)\gamma_1'(u_1)|dv_1\,du_1\\
&\hspace{3.5cm}\times\left|\int_{\R^2}\log\((\gamma_2(u_2)-\gamma_2(v_2))^2\)(\varphi_2^{
y_2,R_2})'(v_2)\psi_2^{x_2,R_2}(u_2)\gamma_2'(u_2)dv_2\,du_2\right|\\
&\hspace{1cm}=\lim_{ t_1, t_2\rightarrow0^+}\widetilde A_{ t_1}\times B_{ t_2}.
\end{align*}
By the support properties of $\varphi_1$ and $\psi_1$, we may assume that $|y_1-v_1|\leq R_1$ and $|x_1-u_1|\leq R_1$ to estimate the following part of the integrand from $\widetilde A_{ t_1}$:
\begin{align*}
&|q_{ t_1}(\gamma_1(u_1)-\gamma_1(v_1))-q_{ t_1}(\gamma_1(x_1)-\gamma_1(v_1))|\\
&\hspace{.5cm}=\left|\frac{(\gamma_1(u_1)-\gamma_1(v_1))(\gamma_1(x_1)-\gamma_1(v_1))^2-(\gamma_1(x_1)-\gamma_1(v_1))(\gamma_1(u_1)-\gamma_1(v_1))^2}{[(\gamma_1(x_1)-\gamma_1(v_1))^2+ t_1^2][(\gamma_1(u_1)-\gamma_1(v_1))^2+ t_1^2]}\right.\\
&\hspace{3.5cm}\left.+\frac{(\gamma_1(u_1)-\gamma_1(v_1)) t_1^2-(\gamma_1(x_1)-\gamma_1(v_1)) t_1^2}{[(\gamma_1(x_1)-\gamma_1(v_1))^2+ t_1^2][(\gamma_1(u_1)-\gamma_1(v_1))^2+ t_1^2]}\right|\\
&\hspace{.5cm}\leq\frac{|\gamma_1(u_1)-\gamma_1(v_1)|\,|\gamma_1(x_1)-\gamma_1(v_1)|\,|\gamma_1(x_1)-\gamma_1(u_1)|}{[(\gamma_1(u_1)-\gamma_1(v_1))^2+ t_1^2][(\gamma_1(x_1)-\gamma_1(v_1))^2+ t_1^2]}\\
&\hspace{3.5cm}+ t_1^2\frac{|\gamma_1(u_1)-\gamma_1(x_1)|}{|(\gamma_1(u_1)-\gamma_1(v_1))^2+ t_1^2|\,|(\gamma_1(x_1)-\gamma_1(v_1))^2+ t_1^2|}\\
&\hspace{.5cm}\less\frac{|u_1-v_1|\,|x_1-v_1|\,|x_1-u_1|}{|u_1-v_1|^2|x_1-v_1|^2}+\frac{|x_1-u_1|}{|x_1-v_1|^2}\less\frac{R_1}{|x_1-y_1|^2}.
\end{align*}
In the last line, we use that $|x_1-y_1|>R_1/4$, $|x_1-u_1|\leq R_1$, $|y_1-v_1|\leq R_1$,
\begin{align*}
&|u_1-v_1|\geq|x_1-y_1|/2,\hspace{.25cm}\text{and}\hspace{.25cm}|x_1-v_1|\geq|x_1-y_1|/2.
\end{align*}
It easily follows that
\begin{align*}
\widetilde A_{t_1}\less\frac{R_1}{|x_1-y_1|^2}\int_{\R^2}|\varphi_1^{y_1,R_1}(v_1)\psi_1^{x_1,R_1}(u_1)|dv_1\,du_1\less\frac{R_1^3}{|x_1-y_1|^2}=\frac{R_1}{(R_1^{-1}|x_1-y_1|)^2},
\end{align*}
as required in \eqref{WBP3} with $n_1=\gamma=1$.

This verifies the first mixed weak boundedness properties \eqref{WBP2} and \eqref{WBP3} for $\mathcal C_\Gamma$, and the other two conditions follow by symmetry.
\end{proof}

\begin{proposition}\label{p:Tb}
Assume $\Gamma$ satisfies the hypotheses of Theorem \ref{t:ext}.  The operator $\widetilde{\mathcal C}_\Gamma$ satisfies the $Tb=T^*\tilde b=0$ conditions with $b(x)=\tilde b(x)=\gamma_1'(x_1)\gamma_2'(x_2)$ for $x=(x_1,x_2)\in\R^2$.
\end{proposition}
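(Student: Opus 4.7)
The plan is to exploit the integral representations in Proposition \ref{p:integralrep} to reduce each of the four $Tb=T^*\tilde b=0$ conditions to a one-variable Cauchy-type cancellation against $\gamma_1'\psi_1$ (or $\gamma_2'\psi_2$), driven by the approximate constancy of a BMO log-pairing on compact sets. By the symmetry between the two parameters and between the four equivalent forms in Proposition \ref{p:integralrep}, it suffices to verify \eqref{Tb1}; the remaining three conditions are handled by the same argument after relabeling.

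Fix $\psi_1\in C_0^\infty(\R)$ with $\gamma_1'\psi_1$ of mean zero, $\psi_2,\varphi_2\in C_0^\infty(\R)$, and $\eta_R(y_1)=\eta(y_1/R)$ for a standard cutoff $\eta$. Applying the first form of Proposition \ref{p:integralrep} with $f=\eta_R\otimes\psi_2$ and $g=\psi_1\otimes\varphi_2$ gives a product decomposition
\begin{align*}
\<M_b\widetilde{\mathcal C}_\Gamma M_bf,g\>=\frac{1}{4\pi^2}\,I_R\cdot J,\quad I_R=\int A_R(x_1)\gamma_1'(x_1)\psi_1(x_1)\,dx_1,\quad J=\int B(x_2)\gamma_2'(x_2)\varphi_2(x_2)\,dx_2,
\end{align*}
where
\begin{align*}
A_R(x_1)=\int_\R\log\((\gamma_1(x_1)-\gamma_1(y_1))^2\)\eta_R'(y_1)\,dy_1,\quad B(x_2)=\int_\R\log\((\gamma_2(x_2)-\gamma_2(y_2))^2\)\psi_2'(y_2)\,dy_2.
\end{align*}
Since $\psi_2'$ is the derivative of a compactly supported function it has integral zero, so Lemma \ref{l:BMOest} gives $|J|\less1$ uniformly. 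The whole task thus reduces to showing $I_R\to0$.

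Next I would show that $A_R$ is approximately constant on $\supp(\psi_1)$. Lemma \ref{l:BMOest} first yields $|A_R(x_1)|\less1$ uniformly in $x_1$ and $R$. For the finer estimate, fix $M$ with $\supp(\psi_1)\subset[-M,M]$ and take $R\gg M$. The hypothesis $\gamma_1(y_1)/y_1\to c_1$ together with $Re(c_1)=1$ and $|Im(c_1)|\leq\lambda_1<1$ implies $Re(c_1^2)=1-(Im\,c_1)^2>0$, so for $R$ large both $(\gamma_1(x_1)-\gamma_1(y_1))^2$ and $(\gamma_1(0)-\gamma_1(y_1))^2$ lie in the right half-plane for $|y_1|\geq R$, while the quotient
\begin{align*}
\frac{\gamma_1(x_1)-\gamma_1(y_1)}{\gamma_1(0)-\gamma_1(y_1)}=1+\frac{\gamma_1(x_1)-\gamma_1(0)}{\gamma_1(0)-\gamma_1(y_1)}=1+O(M/R)
\end{align*}
(using $|\gamma_1(0)-\gamma_1(y_1)|\geq|y_1|\geq R$) stays in a small neighborhood of $1$. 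This allows splitting the principal branch of $\log$ without $2\pi i$ corrections and yields
\begin{align*}
A_R(x_1)-A_R(0)=2\int_\R\log\(\frac{\gamma_1(x_1)-\gamma_1(y_1)}{\gamma_1(0)-\gamma_1(y_1)}\)\eta_R'(y_1)\,dy_1=O(M/R)
\end{align*}
uniformly in $x_1\in\supp(\psi_1)$, using $||\eta_R'||_{L^1}\less1$ and $|\log(1+z)|\less|z|$ for small $|z|$. Then by the mean-zero hypothesis on $\gamma_1'\psi_1$,
\begin{align*}
I_R=A_R(0)\int\gamma_1'\psi_1\,dx_1+\int(A_R(x_1)-A_R(0))\gamma_1'(x_1)\psi_1(x_1)\,dx_1=O(1/R)\to0.
\end{align*}

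For \eqref{Tb2} I would use the form of Proposition \ref{p:integralrep} in which both derivatives fall on $g=\eta_R\otimes\varphi_2$; the inner $x_1$-integral then produces, by symmetry of the kernel, a function of $y_1$ of exactly the same type as $A_R$, and the argument above applies verbatim with the roles of the two variables exchanged. Condition (2) in Definition \ref{d:WBP} is the mirror image and follows by the same template with $1$ and $2$ swapped. The main technical obstacle I expect is the branch-of-logarithm bookkeeping required to justify the splitting $\log(ab)=\log(a)+\log(b)$ in the identity for $A_R(x_1)-A_R(0)$: one must verify carefully that every quantity appearing in the argument of $\log$ sits in the right half-plane once $R$ is sufficiently large, which is precisely where the small Lipschitz constant hypothesis $\lambda_j<1$ and the limits $\gamma_j(x_j)/x_j\to c_j$ enter. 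Once those geometric facts are in hand, the remainder is a standard Cauchy-type cancellation.
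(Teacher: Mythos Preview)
Your proof is correct and follows essentially the same route as the paper: factor the pairing via Proposition \ref{p:integralrep} into a product $I_R\cdot J$, observe that $J$ is harmless by Lemma \ref{l:BMOest}, show that $A_R(x_1)$ becomes (approximately) constant on $\supp(\psi_1)$ as $R\to\infty$, and then kill $I_R$ using the mean-zero hypothesis on $\gamma_1'\psi_1$. The only technical difference is in the intermediate step: the paper subtracts the constant $\log(R^2)$ (using that $\eta'$ has mean zero), rescales, and applies dominated convergence together with the hypothesis $\gamma_1(y_1)/y_1\to c_1$ to obtain a genuine pointwise limit $F_R(x_1)\to c$ independent of $x_1$; you instead subtract $A_R(0)$ and obtain the explicit rate $A_R(x_1)-A_R(0)=O(M/R)$ from the quotient estimate, which is slightly more hands-on (and, as you note, requires the branch-cut check) but in fact does not really need the limit hypothesis on $\gamma_1$ since the inequality $|\gamma_1(0)-\gamma_1(y_1)|\ge|y_1|$ already follows from $Re(\gamma_1(u))=u$.
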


\begin{proof}
Let $\eta_R\in C_0^\infty(\R^{n_1})$ be as above, $\varphi_1,\psi_1\in C_0^\infty(\R^{n_1})$, and $\psi_2\in C_0^\infty(\R^{n_2})$ such that $\gamma_1'\psi_1$ and $\gamma_2'\psi_2$ have mean zero.  We use Proposition \ref{p:integralrep} to compute
\begin{align*}
&\<\widetilde{\mathcal C}_\Gamma(\gamma_1'\eta_R\otimes\gamma_2'\varphi_2),\gamma_1'\psi_1\otimes\gamma_2'\psi_2\>=\frac{1}{4\pi^2}\int_{\R^4}\log\((\gamma_1(x_1)-\gamma_1(y_1))^2\)\log\((\gamma_2(x_2)-\gamma_2(y_2))^2\)\\
&\hspace{4cm}\times(\eta_R)'(y_1)\varphi_2'(y_2)\psi_1(x_1)\psi_2(x_2)\gamma_1'(x_1)\gamma_2'(x_2)dy\,dx\\
&\hspace{.5cm}=\frac{1}{4\pi^2}\int_{\R^4}\log\((\gamma_1(x_1)-\gamma_1(Ry_1))^2\)\log\((\gamma_2(x_2)-\gamma_2(y_2))^2\)
\eta'(y_1)\varphi_2'(y_2)\psi_1(x_1)\psi_2(x_2)
\gamma_1'(x_1)\gamma_2'(x_2)dy\,dx\\
&\hspace{.5cm}=\int_{\R^2}F_R(x_1)\(\int_{\R}\log\((\gamma_2(x_2)-\gamma_2(y_2))^2\) \varphi_2'(y_2)dy_2\)\psi_1(x_1)\psi_2(x_2)\gamma_1'(x_1)\gamma_2'(x_2)dx,\\
&\hspace{2cm}\text{ where }\hspace{.25cm}F_R(x_1)=\int_{\R}\log\((\gamma_1(x_1)-\gamma_1(Ry_1))^2\) \eta'(y_1)dy_1.
\end{align*} 
Since $\eta\in C_0^\infty(\R)$, it follows that $\eta'$ has mean zero.  Note also that $Re(c_1)=1$ since $\gamma_1(x_1)=x_1+iL_1(x_1)$ and $L_1$ is real-valued, so $\log(y_1^2c_1^2)$ is well defined for $y_1\neq0$.  Recall the definition of $c_1$ in the hypotheses of Theorem \ref{t:ext}.  Hence we can also write $F_R(x_1)$ in the following way.
\begin{align*}
F_R(x_1)&=\int_{\R}\[\log\((\gamma_1(x_1)-\gamma_1(Ry_1))^2\)-\log\(R^2\) \]\eta'(y_1)dy_1=\int_{\R}\log\(\frac{(\gamma_1(x_1)-\gamma_1(Ry_1))^2}{R^2}\)\eta'(y_1)dy_1.
\end{align*}
Now we note that for all $x_1\in\R$ and $y_1\neq0$
\begin{align*}
\lim_{R\rightarrow\infty}\log\(\frac{(\gamma_1(x_1)-\gamma_1(Ry_1))^2}{R^2}\)=\lim_{R\rightarrow\infty}\log\(y_1^2\frac{(\gamma_1(x_1)-\gamma_1(Ry_1))^2}{y_1^2R^2}\)=\log(y_1^2c_1^2).
\end{align*}
Recall that we have assumed $\gamma_1(u_1)/u_1\rightarrow c_1$ as $|u_1|\rightarrow\infty$.  For $R$ large enough so that $\supp(\psi_1)\subset B(0,R/2)$, it follows that for $x_1\in\supp(\psi_1)$ and $y_1\in\supp(\eta')\subset B(0,2)\backslash B(0,1)$
\begin{align*}
\frac{|\gamma_1(x_1)-\gamma_1(Ry_1)|^2}{R^2}&\geq(1-\lambda_1^2)\frac{|x_1-Ry_1|^2}{R^2}\geq(1-\lambda_1^2)\frac{R^2-|x_1|^2}{R^2}\geq1-\lambda_1^2.
\end{align*}
We also have
\begin{align*}
\frac{|\gamma_1(x_1)-\gamma_1(Ry_1)|^2}{R^2}&\leq\frac{4|x_1-Ry_1|^2}{R^2}\leq\frac{4|x_1|^2}{R^2}+4|y_1|^2\leq20
\end{align*}
Therefore
\begin{align*}
\left|\log\(\frac{(\gamma_1(x_1)-\gamma_1(Ry_1))^2}{R^2}\)\eta'(y_1)\right|\less\eta'(y_1).
\end{align*}
Then by dominated convergence,
\begin{align*}
\lim_{R\rightarrow\infty}F_R(x_1)=\int_{\R}\log(y_1^2c_1^2) \eta'(y_1)dy_1=c.
\end{align*}
Now $F_R(x_1)\rightarrow c$ for some constant $c\in\C$, which does not depend on $x_1$.  Since $F_R(x_1)$ is bounded independent of $x_1$, we apply dominated convergence again to conclude
\begin{align*}
\lim_{R\rightarrow\infty}\<\widetilde{\mathcal C}_\Gamma(\gamma_1'\eta_R\otimes\gamma_2'\varphi_2),\gamma_1'\psi_1\otimes\gamma_2'\psi_2\>&=\int_{\R^2}c\(\int_{\R}\log\((\gamma_2(x_2)-\gamma_2(y_2))^2\) \varphi_2'(y_2)dy_2\)\psi_1(x_1)\psi_2(x_2)\gamma_1'(x_1)\gamma_2'(x_2)dx\\
&\hspace{-2cm}=c\(\int_\R \psi_1(x_1)\gamma_1'(x_1)dx_1\)\(\int_{\R^2}\log\((\gamma_2(x_2)-\gamma_2(y_2))^2\) \varphi_2'(y_2)\psi_2(x_2)\gamma_2'(x_2)dy_2\,dx_2\)=0.
\end{align*}
Here we use that $\gamma_1'\psi_1$ has mean zero.  By symmetry, this holds when $\gamma'_1\varphi_1$ has mean zero in place of $\gamma_1'\psi_1$.  Hence the $\widetilde{\mathcal C}_\Gamma(b)=0$ condition is satisfied, and the adjoint condition follows by symmetry.
\end{proof}

By Theorem \ref{t:Tbtheorem}, it follows that $\widetilde{\mathcal C}_\Gamma$ can be extended to a bounded linear operator on $L^p(\R^2)$ for $1<p<\infty$.  Hence $\mathcal C_\Gamma$ can be defined for $g\in L^p(\Gamma)$ for $1<p<\infty$, and for $g\in L^p(\Gamma)$, it follows that
\begin{align*}
||\mathcal C_\Gamma g||_{L^p(\Gamma)}^p&=\int_{\R^2}|\widetilde {\mathcal C}_\Gamma M_b(g\circ\gamma)(x)|^p|\gamma_1'(x_1)\gamma_2'(x_2)|dx\\
&\hspace{0cm}\leq||\gamma_1'||_{L^\infty}||\gamma_2'||_{L^\infty}||\widetilde{\mathcal C}_\Gamma||_{L^p,L^p}^p\int_{\R^2}|(g\circ\gamma)(x)|^pdx\\
&\hspace{0cm}\leq4||(\gamma_1')^{-1}||_{L^\infty}||(\gamma_2')^{-1}||_{L^\infty}||\widetilde{\mathcal C}_\Gamma||_{L^p,L^p}^p\int_{\R^2}|g(x)|^p|\gamma_1'(x_1)\gamma_2'(x_2)|dx\leq4||\widetilde{\mathcal C}_\Gamma||_{L^p,L^p}^p||g||_{L^p(\Gamma)}^p.
\end{align*}
Furthermore for $f\in C_0^\infty(\R^2)$, there exists a constant $C_{f,p}>0$ such that 
\begin{align*}
|\widetilde{\mathcal C}_tM_bf(x)|^p&\leq C_{f,p}\(\chi_{|x_1|\leq2R_0}+\frac{1}{|x_1|^p}\chi_{|x_1|>2R_0}\)\(\chi_{|x_2|\leq2R_0}+\frac{1}{|x_2|^p}\chi_{|x_2|>2R_0}\),
\end{align*}
where $R_0$ is large enough so that $\supp(f)\subset B(0,R_0/2)$.  Then by dominated convergence, it follows that
\begin{align*}
\lim_{t_1,t_2\rightarrow0^+}\widetilde{\mathcal C}_tM_bf&=\widetilde{\mathcal C}_\Gamma M_bf\;\;\text{ in }\;\;L^p(\R^2).
\end{align*}
One can argue by density to verify that $\widetilde{\mathcal C}_\Gamma$ extends to all of $L^p(\R^2)$  and that $\widetilde{\mathcal C}_tf\rightarrow\widetilde{\mathcal C}_\Gamma f$ in $L^p(\R^2)$ for $f\in L^p(\R^2)$ as $t_1,t_2\rightarrow0^+$ for all $1<p<\infty$.

It easily follows that for $g\in L^p(\Gamma)$ where $1<p<\infty$
\begin{align*}
\lim_{t_1,t_2\rightarrow0^+}\mathcal C_tg=\mathcal C_\Gamma g
\end{align*}
in $L^p(\Gamma)$.  We prove now Theorem \ref{t:MaxCgammabounds}, in order to conclude about the almost everywhere convergence $\mathcal{C}_{t}g(z)=\mathcal{C}g(z)$ as $t_1,t_2\to 0^+$. We need the following lemma.

\begin{lemma}\label{semigroup}
Let $z_j, \zeta_j\in\Gamma_j$,$j=1,2$. Then, the following relationship holds
\begin{align*}
  \int_{\Gamma_j}q_{t_j}(z-\xi)p_{s_j}(\xi-\zeta) d\xi=q_{t_j+s_j}(z-\zeta)
\end{align*} 
for every $t_j,s_j\neq 0$.
\end{lemma}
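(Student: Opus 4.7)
The plan is to compute the integral by contour integration, following the same template as Lemma \ref{l:Poissonconverge}. I will focus on the case $t_j,s_j>0$ (which is the one needed for Theorem \ref{t:MaxCgammabounds}); the remaining sign combinations are handled identically by closing below the curve. For $R>0$ large, let $U_R=\{w\in\C:|w|<R,\;Im(w)>L_j(Re(w))\}$ and let $E_R$ denote its positively oriented boundary, consisting of $\Gamma_j\cap\overline{B(0,R)}$ traversed left to right together with an upper arc. As a function of $\xi$, the integrand $q_{t_j}(z-\xi)\,p_{s_j}(\xi-\zeta)$ is meromorphic with simple poles at $\xi=z\pm it_j$ and $\xi=\zeta\pm is_j$; since $t_j,s_j>0$ and $z,\zeta\in\Gamma_j$, exactly the two poles $\xi=z+it_j$ and $\xi=\zeta+is_j$ lie inside $U_R$.

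Because $|q_{t_j}(z-\xi)\,p_{s_j}(\xi-\zeta)|\less|\xi|^{-3}$ for $|\xi|$ large, the contribution of the arc to $\oint_{E_R}$ vanishes as $R\to\infty$. To evaluate the two residues, I will use the partial fraction identities
\begin{align*}
q_t(\omega)=\frac{1}{2\pi}\left(\frac{1}{\omega-it}+\frac{1}{\omega+it}\right),\qquad p_t(\omega)=\frac{1}{2\pi i}\left(\frac{1}{\omega-it}-\frac{1}{\omega+it}\right).
\end{align*}
At $\xi=z+it_j$ only $q_{t_j}(z-\xi)$ has a pole (of residue $-\tfrac{1}{2\pi}$, after rewriting $\frac{1}{(z-\xi)+it_j}=-\frac{1}{\xi-(z+it_j)}$), and at $\xi=\zeta+is_j$ only $p_{s_j}(\xi-\zeta)$ has a pole (of residue $\tfrac{1}{2\pi i}$). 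The residue theorem therefore yields
\begin{align*}
\int_{\Gamma_j}q_{t_j}(z-\xi)p_{s_j}(\xi-\zeta)\,d\xi = q_{t_j}(z-\zeta-is_j)-i\,p_{s_j}(z-\zeta+it_j).
\end{align*}

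The last step is the purely algebraic identity $q_t(w-is)-i\,p_s(w+it)=q_{t+s}(w)$ with $w=z-\zeta$. Expanding the left-hand side via the partial fraction formulas above produces four terms; the two ``cross'' contributions $\tfrac{1}{w-i(s-t)}$ and $-\tfrac{1}{w+i(t-s)}$ are equal and cancel, leaving precisely $\tfrac{1}{2\pi}\bigl(\tfrac{1}{w-i(s+t)}+\tfrac{1}{w+i(s+t)}\bigr)=q_{t+s}(w)$, as desired.

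The only technical subtlety is deforming the contour off the Lipschitz graph $\Gamma_j$ rather than $\R$, but this is exactly the maneuver already carried out in Lemma \ref{l:Poissonconverge}; the smallness of the Lipschitz constants guarantees that the poles $z\pm it_j$ and $\zeta\pm is_j$ are strictly separated from $\Gamma_j$, so no new analytical difficulty arises. I would not expect any essential obstacle beyond bookkeeping with signs.
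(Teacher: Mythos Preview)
Your argument is correct and is exactly the approach the paper indicates: the paper's own proof consists only of the sentence ``It is not hard to prove that the conclusion follows using residue theorems similar to the proof of Lemma \ref{l:Poissonconverge},'' and you have carried out precisely that computation. The residue calculation and the final algebraic cancellation both check out.
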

\begin{proof}
It is not hard to prove that the conclusion follows using residue theorems similar to the proof of Lemma \ref{l:Poissonconverge}.
\end{proof}

Finally, we prove Theorem \ref{t:MaxCgammabounds}.
\begin{proof}
Fix $p\in(1,\infty)$ and suppose for the moment that 
$
\mathcal{C}_{t}g=\mathcal{C}_{t_1,t_2}g=P_{t_1,t_2}\mathcal{C}_\Gamma g
$
holds whenever $g$ is a function in $L^p(\Gamma)$ and $t_1,t_2\neq 0$ Then, using some of the estimates we used in the proof of Lemma \ref{l:Poissonconverge}, it is not hard to prove that 
\begin{align*}
\mathcal{C}^*_\Gamma g(\gamma(x))&=\sup_{t_1,t_2>0}|P_t\mathcal{C}_\Gamma g\circ \gamma(x)|\leq \mathcal{M_S}(\mathcal{C}_\Gamma g\circ\gamma)(x),
\end{align*}
where $\mathcal{M_S}$ is the biparameter strong maximal operator. Thus, the $L^p$ boundedness of the maximal operator $\mathcal{C}^{\ast}_\Gamma$ follows from the boundedness of the operators $\mathcal{M_S}$ and $\mathcal{C}_\Gamma$.

It remains to prove the equality $\mathcal{C}_tg=P_t\mathcal{C}_\Gamma g$. Without losing generality, we can suppose that $g\circ\gamma$ is in $C^{\infty}_0(\R^2)$, so that the existence of the pointwise limit $\mathcal{C}_tg(z)$ is guaranteed by Proposition \ref{p:integralrep}.  Thus, using Lemma \ref{semigroup}, we obtain

\begin{align*}
C_tg(z)&=\frac{1}{\pi^2}\int_{\Gamma_1\times\Gamma_2}\frac{z_1-\xi_1}{(z_1-\xi_1)^2+t_1^2}\frac{z_2-\xi_2}{(z_2-\xi_2)^2+t_2^2}g(\xi)d\xi\\
&=\lim_{s_1,s_2\rightarrow0^+}\frac{1}{\pi^2}\int_{\Gamma_1\times\Gamma_2}\frac{z_1-\xi_1}{(z_1-\xi_1)^2+(s_1+t_1)^2}\frac{z_2-\xi_2}{(z_2-\xi_2)^2+(s_2+t_2)^2}g(\xi)d\xi\\
&=\lim_{s_1,s_2\rightarrow0^+}\frac{1}{\pi^4}\int_{(\Gamma_1\times\Gamma_2)^2}\frac{t_1}{(z_1-\zeta_1)^2+t_1^2}\frac{\zeta_1-\xi_1}{(\zeta_1-\xi_1)^2+s_1^2}\frac{t_2}{(z_2-\zeta_2)^2+t_2^2}\frac{\zeta_2-\xi_2}{(\zeta_2-\xi_2)^2+s_2^2}d\zeta g(\xi)d\xi\\
&=\lim_{s_1,s_2\rightarrow0^+}\frac{1}{\pi^2}\int_{\Gamma_1\times\Gamma_2}\frac{t_1}{(z_1-\zeta_1)^2+t_1^2}\frac{t_2}{(z_2-\zeta_2)^2+t_2^2}\(\frac{1}{\pi^2}\int_{\Gamma_1\times\Gamma_2}\frac{\zeta_1-\xi_1}{(\zeta_1-\xi_1)^2+s_1^2}\frac{\zeta_2-\xi_2}{(\zeta_2-\xi_2)^2+s_2^2}g(\xi)d\xi\)d\zeta\\
&=\frac{1}{\pi^2}\int_{\Gamma_1\times\Gamma_2}\frac{t_1}{(z_1-\zeta_1)^2+t_1^2}\frac{t_2}{(z_2-\zeta_2)^2+t_2^2} \mathcal{C}_\Gamma g(\zeta)d\zeta=P_{t_1,t_2}C_\Gamma g(z),
\end{align*}
Therefore, the $L^p$ boundedness of the maximal operator $\mathcal{C}^*_\Gamma$ is proved. The almost everywhere pointwise convergence for a general function g in $L^p(\Gamma)$ can be now obtained using the existence of the pointwise limit for smooth functions, the boundedness of $\mathcal{C^\ast}_{\Gamma}$, and a standard argument. See, for example, \cite{G}.
\end{proof}
This completes the proof of the first part of Theorem \ref{t:Cgammabounds}, pertaining to $\mathcal C_\Gamma$.

\section{Bounds for $\mathcal C_\Gamma^{p1}$, $\mathcal C_\Gamma^{p2}$, $\widetilde{\mathcal C}_\Gamma^{p1}$, and $\widetilde{\mathcal C}_\Gamma^{p2}$}

Like in the last section, we define the parameterized versions of $\mathcal C_\Gamma^{p1}$ and $\mathcal C_\Gamma^{p2}$, for $f\in C_0^\infty(\R^2)$ and $x\in\R^2$
\begin{align*}
\widetilde{\mathcal C}_\Gamma^{p1}M_bf(x)&=\lim_{ t_1, t_2\rightarrow0^+}\widetilde{\mathcal C}_\Gamma^{p1}M_bf(x),\hspace{.25cm}\text{where}&	
&\widetilde{\mathcal 
C}_t^{p1}M_bf(x)=\int_{\R^2}q_{ t_1}(\gamma_1(x_1)-\gamma_1(y_1))p_{ t_2}(\gamma_2(x_2)-\gamma_2(y_2))f(y)b(y)dy,\\
\widetilde{\mathcal C}_\Gamma^{p2}M_bf(x)&=\lim_{ t_1, t_2\rightarrow0^+}\widetilde{\mathcal C}_t^{p2}M_bf(x),\hspace{.25cm}\text{where}&
&\widetilde{\mathcal C}_t^{p2}M_bf(x)=\int_{\R^2}p_{ t_1}(\gamma_1(x_1)-\gamma_1(y_1))q_{ t_2}(\gamma_2(x_2)-\gamma_2(y_2))f(y)b(y)dy.
\end{align*}
We prove these bounds by applying the single parameter $Tb$ theorem from \cite{DJS}.  We outline the proof that $\widetilde{\mathcal C}_\Gamma^{p1}$ and $\widetilde{\mathcal C}_\Gamma^{p2}$ are bounded on $L^p(\Gamma)$.  The details can be deciphered from the previous more complicated biparameter versions.  Define for $f_1,f_2:\R\rightarrow\C$ and $x_1,x_2\in\R$
\begin{align*}
&\widetilde{\mathcal C}_{\Gamma_1}M_{\gamma_1'}f_1(x_1)=\lim_{t_1\rightarrow0^+}\int_{\R}q_{t_1}(\gamma_1(x_1)-\gamma_1(y_1))f_1(y_1)\gamma_1'(y_1)dy_1,\\
&\widetilde{\mathcal C}_{\Gamma_2}M_{\gamma_2'}f_2(x_2)=\lim_{t_2\rightarrow0^+}\int_{\R}q_{t_2}(\gamma_2(x_2)-\gamma_2(y_2))f_2(y_2)\gamma_2'(y_2)dy_2.
\end{align*}
The following propositions are routine given the proofs of Propositions \ref{p:integralrep}, \ref{p:WBP}, and \ref{p:Tb}.

\begin{proposition}
Assume $\Gamma$ satisfies the hypotheses of Theorem \ref{t:ext}.  For all $f\in C_0^\infty(\R^2)$ and $x\in\R^2$, 
\begin{align*}
\widetilde{\mathcal C}_\Gamma^{p1}(bf)(x)&=\frac{1}{2\pi}\int_{\R}\log\((\gamma_1(x_1)-\gamma_1(y_1))^2\)\partial_{y_1}f(y_1,x_2)dy_1,\\
\widetilde{\mathcal C}_\Gamma^{p2}(bf)(x)&=\frac{1}{2\pi}\int_{\R}\log\((\gamma_2(x_2)-\gamma_2(y_2))^2\)\partial_{y_2}f(x_1,y_2)dy_2.
\end{align*}
Also, for all $f,g\in C_0^\infty(\R^2)$, the pairings $\<\widetilde{\mathcal C}_\Gamma^{p1}(bf),bg\>$ and $\<\widetilde{\mathcal C}_\Gamma^{p2}(bf),bg\>$ can be realized as any of the following absolutely convergent integrals:
\begin{align*}
\<\widetilde{\mathcal C}_\Gamma^{p1}(bf),bg\>&=\frac{1}{2\pi}\int_{\R^3}\log\((\gamma_1(x_1)-\gamma_1(y_1))^2\)\partial_{y_1}f(y_1,x_2)g(x)b(x)dy_1\,dx,\\
\<\widetilde{\mathcal  C}_\Gamma^{p1}(bf),bg\>&=-\frac{1}{2\pi}\int_{\R^3}\log\((\gamma_1(x_1)-\gamma_1(y_1))^2\)f(y_1,x_2)\partial_{x_1}g(x)b(y_1,x_2)dy_1\,dx,
\end{align*}
\begin{align*}
\<\widetilde{\mathcal C}_\Gamma^{p2}(bf),bg\>&=\frac{1}{2\pi}\int_{\R^3}\log\((\gamma_2(x_2)-\gamma_2(y_2))^2\)\partial_{y_2}f(x_1,y_2)g(x)b(x)dy_2\,dx,\\
\<\widetilde{\mathcal C}_\Gamma^{p2}(bf),bg\>&=-\frac{1}{2\pi}\int_{\R^3}\log\((\gamma_2(x_2)-\gamma_2(y_2))^2\)f(x_1,y_2)\partial_{x_2}g(x)b(x_1,y_2)dy_2\,dx.
\end{align*}
\end{proposition}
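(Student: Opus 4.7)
The plan mirrors the strategy of Proposition \ref{p:integralrep}, just applied separately in each of the two parameter directions instead of simultaneously. First I would focus on $\widetilde{\mathcal C}_\Gamma^{p1}(bf)(x)$. The identity \eqref{potentialformula} gives
\begin{align*}
q_{t_1}(\gamma_1(x_1)-\gamma_1(y_1))\gamma_1'(y_1)=-\frac{1}{2\pi}\partial_{y_1}\log\bigl((\gamma_1(x_1)-\gamma_1(y_1))^2+t_1^2\bigr),
\end{align*}
so Fubini followed by integration by parts in $y_1$ (the boundary term vanishes because $f\in C_0^\infty(\R^2)$) yields
\begin{align*}
\widetilde{\mathcal C}_t^{p1}M_bf(x)=\frac{1}{2\pi}\int_{\R^2}\log\bigl((\gamma_1(x_1)-\gamma_1(y_1))^2+t_1^2\bigr)\,\widetilde p_{t_2}(x_2,y_2)\,\partial_{y_1}f(y_1,y_2)\,dy,
\end{align*}
where $\widetilde p_{t_2}$ is the approximation to the identity introduced in Lemma \ref{l:Poissonconverge}.

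Next I would pass to the simultaneous limit $t_1,t_2\to 0^+$. For each fixed $x$ and each fixed $y_1$, the $y_2$ integral converges to $\partial_{y_1}f(y_1,x_2)$ by the approximation-to-identity argument used in the proof of Lemma \ref{l:Poissonconverge}, uniformly for $y_1$ in the compact $y_1$-projection of $\supp(\partial_{y_1}f)$. Pointwise in $y_1\ne x_1$ the log factor converges to $\log((\gamma_1(x_1)-\gamma_1(y_1))^2)$, and for $0<t_1<1/4$ it is dominated by the locally integrable function $|\ln((x_1-y_1)^2)|+C$ (as in the proof of Proposition \ref{p:integralrep}). Combining these two facts with dominated convergence delivers the claimed identity
\begin{align*}
\widetilde{\mathcal C}_\Gamma^{p1}(bf)(x)=\frac{1}{2\pi}\int_{\R}\log\bigl((\gamma_1(x_1)-\gamma_1(y_1))^2\bigr)\,\partial_{y_1}f(y_1,x_2)\,dy_1.
\end{align*}
The identity for $\widetilde{\mathcal C}_\Gamma^{p2}$ is obtained by swapping the roles of the two coordinates; no new ideas are needed.

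For the bilinear pairings, I would pair both sides of the identity above with $b(x)g(x)$ and integrate. The first pairing formula is immediate from the pointwise identity together with a dominated-convergence argument in $x$ (the $x_2$-integral converges because $g$ is smooth and compactly supported). The second pairing formula is obtained by applying the identity \eqref{potentialformula} in the $x_1$ variable rather than $y_1$, namely $\partial_{x_1}\log((\gamma_1(x_1)-\gamma_1(y_1))^2+t_1^2)=-\partial_{y_1}\log((\gamma_1(x_1)-\gamma_1(y_1))^2+t_1^2)$ times $\gamma_1'(x_1)$, followed by integration by parts in $x_1$ to shift the derivative onto $g$. The corresponding two formulas for $\widetilde{\mathcal C}_\Gamma^{p2}$ follow by the same argument with the indices permuted.

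The only real technical point is producing a dominant function valid uniformly in both $t_1$ and $t_2$ simultaneously. This is not a serious obstacle: the log factor is controlled by $|\ln((x_1-y_1)^2)|+C$ independently of $t_1$, while the $p_{t_2}$ factor is dominated by the maximal operator $\mathcal M_2\partial_{y_1}f(y_1,x_2)$ (acting in the second variable), which is bounded since $\partial_{y_1}f$ is compactly supported and smooth. The product of these dominants is integrable against $|\gamma_2'(y_2)|$ on the compact support of $f$, so dominated convergence applies and the integral representations follow.
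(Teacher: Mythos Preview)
Your approach is correct and is exactly the adaptation of Proposition \ref{p:integralrep} that the paper has in mind; the paper itself gives no proof, merely remarking that the result is ``routine given the proofs of Propositions \ref{p:integralrep}, \ref{p:WBP}, and \ref{p:Tb}.'' One small point: your final paragraph about the dominant is slightly muddled---once you have carried out the $y_2$-integral to get $F_{t_2}(y_1)=\int_\R\widetilde p_{t_2}(x_2,y_2)\partial_{y_1}f(y_1,y_2)\,dy_2$, the relevant uniform bound is simply $|F_{t_2}(y_1)|\lesssim\|\partial_{y_1}f\|_{L^\infty}\chi_K(y_1)$ for a fixed compact $K$, and there is no further integration against $|\gamma_2'(y_2)|$; the maximal function is not needed here.
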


\begin{proposition}
Assume $\Gamma$ satisfies the hypotheses of Theorem \ref{t:ext}.  The operator $\widetilde{\mathcal C}_{\Gamma_1}$ and $\widetilde{\mathcal C}_{\Gamma_2}$ satisfies the single parameter weak boundedness property.
\end{proposition}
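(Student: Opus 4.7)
The plan is to carry out in one variable the same integration-by-parts argument used in Proposition \ref{p:WBP}, with Lemma \ref{l:BMOest} again supplying the key $BMO$-type bound. Because Lemma \ref{l:BMOest} is already stated separately for each curve $\Gamma_j$ and makes no use of a product structure, it transfers directly to the single-parameter setting.

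First I would establish the one-dimensional analog of Proposition \ref{p:integralrep}: for any $f\in C_0^\infty(\R)$,
\begin{align*}
\widetilde{\mathcal C}_{\Gamma_1}(\gamma_1' f)(x_1)=\frac{1}{2\pi}\int_\R\log\bigl((\gamma_1(x_1)-\gamma_1(y_1))^2\bigr)f'(y_1)\,dy_1.
\end{align*}
This follows exactly as in Proposition \ref{p:integralrep}: the identity $q_{t_1}(\gamma_1(x_1)-\gamma_1(y_1))\gamma_1'(y_1)=-(2\pi)^{-1}\partial_{y_1}\log((\gamma_1(x_1)-\gamma_1(y_1))^2+t_1^2)$ permits integration by parts in $y_1$, and the passage to the limit $t_1\to 0^+$ is justified by dominated convergence, using the uniform (in $t_1<1/4$) majorant $|\log((\gamma_1(x_1)-\gamma_1(y_1))^2+t_1^2)|\lesssim 1+|\ln((x_1-y_1)^2)|$ together with the local integrability of $\ln|\cdot|$ on $\R$.

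Next, let $\varphi,\psi\in C_0^\infty(\R)$ be normalized bumps of sufficiently high order, $x_1\in\R$, and $R_1>0$. Writing $(\varphi^{x_1,R_1})'(v_1)=R_1^{-1}(\varphi')^{x_1,R_1}(v_1)$ and inserting the representation above, I obtain
\begin{align*}
&\left|\left\langle M_{\gamma_1'}\widetilde{\mathcal C}_{\Gamma_1}M_{\gamma_1'}(\varphi^{x_1,R_1}),\psi^{x_1,R_1}\right\rangle\right|\\
&\hspace{0.5cm}\leq\frac{1}{2\pi}\int_\R\left|\int_\R\log\bigl((\gamma_1(u_1)-\gamma_1(v_1))^2\bigr)R_1^{-1}(\varphi')^{x_1,R_1}(v_1)\,dv_1\right||\psi^{x_1,R_1}(u_1)\gamma_1'(u_1)|\,du_1.
\end{align*}
Because $\varphi'$ is a normalized bump of order one less than that of $\varphi$ and has mean zero (as $\varphi$ is compactly supported), Lemma \ref{l:BMOest} bounds the inner integral by an absolute constant, uniformly in $u_1,x_1,R_1$ and in the Lipschitz curve $\Gamma_1$. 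Combining this with $\|\psi^{x_1,R_1}\|_{L^1(\R)}\lesssim R_1$ and $\|\gamma_1'\|_{L^\infty(\R)}\leq 2$ produces the desired bound of order $R_1$. The estimate for $\widetilde{\mathcal C}_{\Gamma_2}$ is identical by symmetry.

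The argument is strictly simpler than its biparameter counterpart in Proposition \ref{p:WBP}, so I do not anticipate any serious obstacle. The only mildly delicate point is the justification of the integration by parts and the passage to the limit in $t_1$, which is handled verbatim as in the proof of Proposition \ref{p:integralrep}.
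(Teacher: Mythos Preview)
Your proposal is correct and takes essentially the same approach as the paper: the paper does not spell out a proof here but simply declares the result ``routine given the proofs of Propositions \ref{p:integralrep}, \ref{p:WBP}, and \ref{p:Tb},'' and your argument is precisely the one-variable reduction of the weak boundedness computation in Proposition \ref{p:WBP}, using the integral representation and Lemma \ref{l:BMOest} in the same way.
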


\begin{proposition}
Assume $\Gamma$ satisfies the hypotheses of Theorem \ref{t:ext}.  The operator $\widetilde{\mathcal C}_{\Gamma_1}$ and $\widetilde{\mathcal C}_{\Gamma_2}$ satisfies the cancellation conditions $\widetilde{\mathcal C}_{\Gamma_1}(\gamma_1')=\widetilde{\mathcal C}_{\Gamma_1}^*(\gamma_1')=\widetilde{\mathcal C}_{\Gamma_2}(\gamma_2')=\widetilde{\mathcal C}_{\Gamma_2}^*(\gamma_2')=0$.
\end{proposition}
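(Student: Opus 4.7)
The plan is to mirror the argument of Proposition \ref{p:Tb}, but in one parameter. By symmetry it suffices to verify $\widetilde{\mathcal C}_{\Gamma_1}(\gamma_1')=0$; the adjoint statement and the $\Gamma_2$ statements are completely analogous. Let $\eta_R(y_1)=\eta(y_1/R)$ with $\eta\in C_0^\infty(\R)$ satisfying $\eta\equiv 1$ on $B(0,1)$ and $\supp(\eta)\subset B(0,2)$, and let $\psi_1\in C_0^\infty(\R)$ be a normalized bump such that $\gamma_1'\psi_1$ has mean zero.

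First I would establish the single-parameter analogue of Proposition \ref{p:integralrep}, namely
\begin{align*}
\<\widetilde{\mathcal C}_{\Gamma_1}(\gamma_1' f),\gamma_1' g\>=\frac{1}{2\pi}\int_{\R^2}\log\bigl((\gamma_1(x_1)-\gamma_1(y_1))^2\bigr)f'(y_1)g(x_1)\gamma_1'(x_1)\,dy_1\,dx_1,
\end{align*}
which is a routine integration by parts using identity \eqref{potentialformula} together with dominated convergence as $t_1\to 0^+$. Applying this with $f=\eta_R$ and $g=\psi_1$, and changing variables $y_1\mapsto Ry_1$, produces
\begin{align*}
\<\widetilde{\mathcal C}_{\Gamma_1}(\gamma_1'\eta_R),\gamma_1'\psi_1\>=\frac{1}{2\pi}\int_{\R}F_R(x_1)\psi_1(x_1)\gamma_1'(x_1)\,dx_1,
\end{align*}
where
\begin{align*}
F_R(x_1)=\int_{\R}\log\bigl((\gamma_1(x_1)-\gamma_1(Ry_1))^2\bigr)\,\eta'(y_1)\,dy_1.
\end{align*}

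The key step is to identify the pointwise limit of $F_R(x_1)$ as $R\to\infty$. Since $\eta'\in C_0^\infty(\R)$ has mean zero, I may subtract the constant $\log(R^2)$ from the integrand and rewrite
\begin{align*}
F_R(x_1)=\int_{\R}\log\!\left(\frac{(\gamma_1(x_1)-\gamma_1(Ry_1))^2}{R^2}\right)\eta'(y_1)\,dy_1.
\end{align*}
For $y_1\in\supp(\eta')\subset B(0,2)\setminus B(0,1)$ and any fixed $x_1$, the hypothesis $\gamma_1(u_1)/u_1\to c_1$ (with $\mathrm{Re}(c_1)=1$, so the branch cut is avoided) yields the pointwise limit $\log(y_1^2 c_1^2)$. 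The Lipschitz comparison $(1-\lambda_1^2)|y_1|^2\leq |\gamma_1(x_1)-\gamma_1(Ry_1)|^2/R^2\leq C$ that was already exploited in Proposition \ref{p:Tb} provides an $R$-independent integrable majorant on the compact annular support of $\eta'$, so dominated convergence gives
\begin{align*}
\lim_{R\to\infty}F_R(x_1)=\int_{\R}\log(y_1^2 c_1^2)\,\eta'(y_1)\,dy_1=:c,
\end{align*}
with $c\in\C$ independent of $x_1$.

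Finally, since $F_R$ is bounded uniformly in $R$ and $x_1$ (again by the same log estimate), a second application of dominated convergence on the outer $x_1$ integral gives
\begin{align*}
\lim_{R\to\infty}\<\widetilde{\mathcal C}_{\Gamma_1}(\gamma_1'\eta_R),\gamma_1'\psi_1\>=\frac{c}{2\pi}\int_{\R}\psi_1(x_1)\gamma_1'(x_1)\,dx_1=0,
\end{align*}
where the final equality uses that $\gamma_1'\psi_1$ has mean zero. The adjoint cancellation is obtained by interchanging the roles of $\eta_R$ and $\psi_1$ together with the second integration-by-parts identity above, and the analogous statements for $\widetilde{\mathcal C}_{\Gamma_2}$ follow by interchanging indices. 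The main technical point is the dominated-convergence step producing the constant $c$; the rest is bookkeeping.
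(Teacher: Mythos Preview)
Your proposal is correct and follows exactly the route the paper indicates: the paper does not give an explicit proof of this proposition but simply declares it ``routine given the proofs of Propositions \ref{p:integralrep}, \ref{p:WBP}, and \ref{p:Tb},'' and your argument is precisely the one-parameter reduction of the proof of Proposition \ref{p:Tb}, with the same $F_R$ function, the same use of the asymptotic hypothesis $\gamma_1(u_1)/u_1\to c_1$, and the same dominated-convergence justification.
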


Then by the $Tb$ theorem of David-Journ\'e-Semmes \cite{DJS}, it follows that $\widetilde{\mathcal C}_{\Gamma_1}$ and $\widetilde{\mathcal C}_{\Gamma_2}$ are bounded on $L^p(\R)$.  It follows that for $f,g\in C_0^\infty(\R)$
\begin{align*}
\left|\<\widetilde{\mathcal C}_\Gamma^{p1}(bf),bg\>\right|&=\frac{1}{2\pi}\int_\R\left|\lim_{ t_1\rightarrow0^+}\int_{\R^2}\log\((\gamma_1(x_1)-\gamma_1(y_1))^2+ t_1^2\)\partial_{y_1}f(y_1,x_2)g(x)\gamma_1'(x_1)dy_1\,dx_1\right||\gamma_2'(x_2)|dx_2\\
&\hspace{0cm}=\frac{1}{2\pi}\int_\R\left|\lim_{ t_1\rightarrow0^+}\int_{\R^2}q_{ t_1}(\gamma_1(x_1)-\gamma_1(y_1))f(y_1,x_2)\gamma_1'(y_1)g(x)\gamma_1'(x_1)dy_1\,dx_1\right||\gamma_2'(x_2)|dx_2\\
&\hspace{0cm}=\frac{1}{2\pi}\int_\R\left|\<\widetilde{\mathcal C}_{\Gamma_1}(\gamma_1'\,f(\cdot,x_2)),\gamma_1'g(\cdot,x_2)\>\right||\gamma_2'(x_2)|dx_2\\
&\hspace{0cm}\less\int_\R||f(\cdot,x_2)||_{L^p(\R)}||g(\cdot,x_2)||_{L^{p'}(\R)}dx_2\leq||f||_{L^p(\R^2)}||g||_{L^{p'}(\R^2)}.
\end{align*}
Therefore $\widetilde{\mathcal C}_\Gamma^{p1}$ is bounded on $L^p(\R^2)$ for $1<p<\infty$, and by symmetry $\widetilde{\mathcal C}_\Gamma^{p2}$ is as well.  Again it follows that for $f\in L^p(\R^2)$
\begin{align*}
&\lim_{t_1,t_2\rightarrow0^+}\widetilde{\mathcal C}_t^{p1}M_{b}f=\widetilde{\mathcal C}_{\Gamma_1}M_{b}f& &\text{and}& &\lim_{t_1,t_2\rightarrow0^+}\widetilde{\mathcal C}_t^{p2}M_{b}f=\widetilde{\mathcal C}_{\Gamma_2}M_{b}f& &\text{in }L^p(\R^2),&
\end{align*}
and for $g\in L^p(\Gamma)$
\begin{align*}
&\lim_{t_1,t_2\rightarrow0^+}\mathcal C_t^{p1}g=\mathcal C_\Gamma^{p1}g& &\text{and}& &\lim_{t_1,t_2\rightarrow0^+}\mathcal C_t^{p2}g=\mathcal C_\Gamma^{p2}g& &\text{in }L^p(\Gamma).&
\end{align*}
This completes the proof for the $L^p$ convergence.  Similarly, the almost everywhere convergence can be derived with arguments analogous to the ones used in the biparameter situation. We prefer not to report detail again since it is clear by now how to proceed.

\end{spacing}
\nocite{*}
\bibliographystyle{alpha}
\bibliography{bibliography17}

\end{document}